\documentclass[10pt,reqno]{amsart}

\usepackage{amsmath}
\usepackage{amssymb}
\usepackage{amsthm}
\usepackage{mathtools}
\usepackage{yhmath}

\usepackage{geometry}
\usepackage[graphicx]{realboxes}
\usepackage{float}
\usepackage{subfigure}
\usepackage{booktabs}
\usepackage{enumitem}
\usepackage{multirow}
\usepackage{bigstrut}
\usepackage{stfloats}
\usepackage{longtable}

\usepackage[dvipsnames]{xcolor}
\usepackage{listings}
\usepackage{pifont}
\usepackage{fancyhdr}

\usepackage{tikz}
\usetikzlibrary{positioning, shapes.geometric, arrows}

\makeatletter
\renewcommand\paragraph{\@startsection{paragraph}{4}{\z@}%
	{3.25ex \@plus1ex \@minus.2ex}%
	{1.5ex \@plus.2ex}%
	{\normalfont\normalsize\bfseries}}
\makeatother

\numberwithin{equation}{section}

\theoremstyle{plain} 
\newtheorem{Theorem}{Theorem}[section]
\newtheorem{Lemma}{Lemma}[section]
\newtheorem{Corollary}{Corollary}[section]
\newtheorem{Proposition}{Proposition}[section]

\theoremstyle{definition} 
\newtheorem{Definition}{Definition}[section]

\theoremstyle{remark} 
\newtheorem{remark}{Remark}[section]

\usepackage{hyperref}
\hypersetup{
	colorlinks=true,
	linkcolor=black,
	citecolor=black,
	filecolor=black,
	urlcolor=blue
}

\usepackage{setspace}

\begin{document}
	\setstretch{0.93} 
	\setlength{\jot}{2pt}

	\setlength{\abovedisplayskip}{6pt}
	\setlength{\belowdisplayskip}{6pt}
	\setlength{\abovedisplayshortskip}{3pt}
	\setlength{\belowdisplayshortskip}{3pt}

	\setlength{\parskip}{0pt}

	\title[Global existence and blowup of relativistic dust]{On classification of dynamics for dust fluid under spherical symmetry in Schwarzschild spacetime}

	\author{Yifan Liu}
	\address{Department of Mathematics, Zhejiang Sci-Tech University, Hangzhou, 310018, China}
	\email{tliuyifan@163.com}

	\author{Shuang Miao}
	\address{School of Mathematics and Statistics, Wuhan University, Wuhan, Hubei 430072, China}
	\email{shuang.m@whu.edu.cn}

	\author{Changhua Wei}
	\address{Department of Mathematics, Zhejiang Sci-Tech University, Hangzhou, 310028, China}
	\email{chwei@zstu.edu.cn}

	\maketitle

	\begin{abstract}
		\vspace{-0.5cm}
		We investigate the dynamics of classical solutions to the dust fluid model under spherical symmetry in Schwarzschild spacetime. According to whether the solution will persist globally or develop a finite-time singularity, a precise classification of initial data is provided. Moreover, a detailed analysis on the exact blowup profile near the singularity is given.
	\end{abstract}



\section{Introduction}
\noindent The global-in-time analysis of the relativistic Euler equations on curved backgrounds constitutes a major frontier in the theory of nonlinear hyperbolic conservation laws. When the fluid pressure vanishes identically, the resulting relativistic dust model reduces to a decoupled transport-driven system where the fluid variables are propagated strictly along the timelike geodesics of the underlying geometry. In the strong-field regime of a Schwarzschild spacetime, the presence of coordinate and curvature singularities, alongside the non-trivial Christoffel symbols acting as geometric source terms, severely complicates the classical PDE framework. Because the background geometry induces strong gravitational focusing without any countervailing pressure gradients, smooth initial data generic to the system naturally degenerate in finite time, precipitating the formation of caustics and shell-crossing singularities where the mass density blows up. We investigate the global dynamics of the Schwarzschild dust fluid, providing a rigorous analysis of the competition between the dispersive decay driven by the geometry and the finite-time singularity formation dictated by the nonlinear transport structure.

On a given four-dimensional Lorentzian manifold $(\mathcal{M}, g)$, the evolution of a relativistic perfect fluid is governed by the local energy-momentum conservation law:
\begin{equation}\label{2.1}
	\nabla_{\mu}T^{\mu}_{\nu} = 0,
\end{equation}
where $\nabla_{\mu}$ denotes the covariant derivative associated with the spacetime metric $g_{\mu\nu}$. The energy-momentum tensor $T^{\mu}_\nu$ is defined by
\begin{equation}\label{tensor}
	T^{\mu}_{\nu} := \left(\rho + p\right)u^{\mu}u_{\nu} + p \delta^{\mu}_{\nu},
\end{equation}
where $\rho$ represents the proper energy density, $p$ is the fluid pressure, and $u^{\mu}$ denotes the 4-velocity field normalized such that $g_{\mu\nu}u^{\mu}u^{\nu}=-1$. For the relativistic dust model considered herein, the pressure vanishes identically ($p \equiv 0$). Throughout this paper, we employ the Einstein summation convention, where repeated indices are summed over their respective ranges $\{0, 1, 2, 3\}$. Indices are raised and lowered using the exterior Schwarzschild metric:
\begin{equation}\label{metric}
	g = -f(r) dt^2 + \frac{1}{f(r)}dr^2 + r^2(d\theta^2 + \sin^2\theta d\varphi^2),
\end{equation}
where the metric coefficient $f(r)$ is given by
\begin{equation}\label{f(r)}
	f(r) = 1 - \frac{2m}{r}.
\end{equation}
Here, $m > 0$ represents the mass of the black hole, and we restrict our domain to the exterior region $r > 2m$. The study of relativistic fluids in this setting presents a fundamental challenge in mathematical general relativity, balancing the analytical intricacies of nonlinear hyperbolic conservation laws with the geometric constraints of a fixed black hole background. We restrict our attention to spherically symmetric profiles. Specifically, we assume the 4-velocity takes the form $u = (u^{0}(t,r), u^{1}(t,r), 0, 0)$. The physical classical velocity $v = v(t,r)$ is defined relative to the coordinate frame as:
 \begin{equation}\label{velocity-def}
 	v(t,r) := cf^{-1}(r)\frac{u^1}{u^0}.
 \end{equation}
 Under this ansatz, the conservation law \eqref{2.1} for the relativistic dust fluid can be recast as the following quasilinear hyperbolic system (see Section \ref{sec:2} or \cite{Liu-Qu-Wang-Wei} for a detailed derivation):
 \begin{equation}\label{main-eq1}
 	\begin{cases}
 		\partial_t v + \frac{f(r)}{c}v\partial_r v - \frac{f'(r)}{2c}(v^2 - c^2) = 0,\\
 		\partial_t\rho + \frac{f(r)}{c}v\partial_r\rho + \rho\left(\frac{f(r)}{c}\partial_r v + \frac{2f(r)v}{cr}\right) = 0.
 	\end{cases}
 \end{equation}
 We supplement \eqref{main-eq1} with smooth initial data at $t=0$:
 \begin{equation}\label{initial-data}
 	v(0,r) = v_0(r), \quad \rho(0,r) = \rho_0(r),
 	\end{equation}
 	where $v_0(r) \in C^2([\alpha_{\min}, \alpha_{\max}])$, $\rho_0(r) \in C^1([\alpha_{\min}, \alpha_{\max}])$ is strictly positive, and the spatial boundaries satisfy:
 	\begin{equation*}
 		2m < \alpha_{\min} \leq r \leq \alpha_{\max} \leq +\infty.
 		\end{equation*}
 		To ensure the physical validity of the matter model, the initial velocity is strictly bounded by the speed of light, satisfying:
 		\begin{equation}\label{con}
 			-c < v_{\min} \leq v_0(r) \leq v_{\max} < c.
 			\end{equation}
 			\begin{remark}\label{rem:weak-hyp}
 				The characteristic eigenvalues of the quasilinear system \eqref{main-eq1} are coincident, given by:
 				\begin{equation*}
 					\lambda_1 = \lambda_2 = \frac{f(r)}{c}v,
 					\end{equation*}
 					with the corresponding linearly independent right eigenvector space collapsing to a single geometric vector $r_1 = r_2 = (0, 1)^{T}$. Consequently, system \eqref{main-eq1} is strictly weakly hyperbolic and linearly degenerate in the sense of Lax. A key analytical consequence of this algebraic structure is that the density variable $\rho$ experiences a loss of one derivative relative to the velocity field $v$. To compensate for this derivative loss and close the energy estimates, we naturally prescribe an initial regularity for the velocity field that is one order higher than that of the density.
 					\end{remark}
\subsection{Research history}
The long-time behavior of solutions to multi-dimensional relativistic Euler equations has been studied extensively in past decades. In flat spacetime a breakthrough was achieved by Christodoulou (see \cite{Christodoulou}) in which he revealed the mechanism of shock formation and described the geometry of the singularity for $3d$ relativistic Euler in Minkowski spacetime.
When the background spacetime admits a metric taking the form $$g=-dt^2+a^2(t)\sum_{i=1}^{3}(dx^i)^2,$$
we get a spatially homogeneous and isotropic cosmological spacetime, which is a class of exact solutions to Einstein field equations called Friedmann-Lema\^{\i}tre-Robertson-Walker (FLRW). For dynamical stability of the FLRW solution to the Einstein Euler equations, see \cite{Todd3,Fajman, F-T,F-T1,FOOW24,FOOW25,Friedrich,H-J,Ju-Ke-Wei,LeFloch-Wei,Liu-Wei,Todd1,Ringstrom,Ringstrom1,Rodnianski,Speck,Speck1,Todd2,Oliynyk1,Wei}.
All the above results are for the fluids in an expanding spacetime, for which one expects global-in-time stability. For spacetime admitting black holes, the situation is more diverse. In particular the dust model in this kind of spacetime is related to gravitational collapse and cosmic censorship (see \cite{Chr1,OS,Tolman}). However, the study on long-time dynamics for relativistic Euler equation in black hole spacetime is relatively limited. LeFloch and Xiang \cite{LeFloch-Xiang,LeFloch-Xiang1} derived the relativistic Euler equations evolving in Schwarzschild spacetime and studied the solution in bounded variation space. Hu and Guo \cite{Hu} investigated the spherically symmetric solution of the Chaplygin gas in Schwarzschild spacetime and identified a class of initial data that leads to a finite time singularity of the smooth solution and at the blowup time, the mass-energy density itself approaches infinity. The naked singularity and concentration of the fluids has been systematically studied recently by Guo, Had\v{z}i'{c} and Jang \cite{Guo-Hadzic-Jang} for Einstein-Euler system with $p=\epsilon \rho$ when $0<\epsilon\ll1$.  Liu, Qu, Wang and Wei \cite{Liu-Qu-Wang-Wei} studied the relativistic dust evolving in Schwarzschild-de Sitter and Schwarzschild-Anti-de Sitter spacetimes with vanishing mass $m$. An autonomous dynamical system along the characteristics, and the exact solution formulas are obtained. Based on these, an exact classification on initial data that elucidating whether the classical solution will persist globally or encounter a finite-time blowup including the coordinate singularity and fluid singularities is given. For Schwarzschild spacetime, due to the presence of a non-zero mass $m>0$, the background metric explicitly breaks this autonomy, producing a highly coupled, singular, non-autonomous ODE system along the characteristic lines. Therefore one needs more sophisticated techniques, such as an implicit function framework ($\psi(v, \alpha) = 0$) and mapping the temporal integrals into alternative phase-space tracking variables ($t \mapsto v$) to understand global dynamics for dust model and this is the main goal of the present work.
\subsection{Main results}
Assume $r>0$ and $f(r)>0$.  We investigate the fluid evolution in the exterior domain of the Schwarzschild spacetime which corresponds $r\in S:=(2m,\infty)$.
\begin{Definition}
	The lifespan $T_{*}$ is the supremum of all times $T>0$ such that the Cauchy problem \eqref{main-eq1} admits a classical solution on $[0,T]\times S$.
\end{Definition}
\subsubsection{Main results of relativistic dust in the case $v_0(r)=0$}
In this case we denote
\begin{equation*}
	R_1=\left\{2m<r\leq\frac{\sqrt{33}+3}{3}m \right\},\ R_2=\left\{r>\frac{\sqrt{33}+3}{3}m\right\},\
\end{equation*}
\begin{equation*}
	H_1=\left\{v'_0(r)\geq\chi_1(r)\right\},\ H_2=\left\{v'_0(r)<\chi_1(r)\right\},
\end{equation*}
\begin{equation*}
	H_3=\left\{v'_0(r)>\chi_2(r)\right\},\ H_4=\left\{v'_0(r)\leq\chi_2(r)\right\},
\end{equation*}
where
$$\chi_1(r)=-\frac{mc(3r-2m)}{2r^2(r-2m)}-\frac{3\sqrt{2}c}{2r}\sqrt{\frac{m}{r-2m}}\arctan\sqrt{\frac{r-2m}{2m}},$$
$$\chi_2(r)=-\frac{c}{2r}\sqrt{\frac{m}{r-2m}}\left(\sqrt{\frac{\sqrt{33}-3}{2}}+\frac{(\sqrt{33}+3)^{\frac{3}{2}}}{8\sqrt{3}}+3\sqrt{2}\arctan\sqrt{\frac{\sqrt{33}-3}{6}}\right).$$
\begin{Theorem}\label{thm:1.1}
If $v_0(r)=0$, then the Cauchy problem \eqref{main-eq1} admits a unique classical solution on $[0, +\infty)\times \left(2m,+\infty\right) $, provided that all the initial profiles satisfy
	$$(r, v_{0}^{\prime}(r))\in\mathcal{F}=(R_1\cap H_1) \cup(R_2\cap H_3).$$
	If there exists initial profiles satisfying
	$$(r, v_{0}^{\prime}(r))\in\mathcal{G}=(R_1\cap H_2) \cup(R_2\cap H_4)$$
	then there exist a $0<T_*<+\infty$ such that
	$$\|\partial_{r}v(t)\|_{L^{\infty}(S)}=O\left(\frac{1}{T_*-t}\right)\rightarrow -\infty\ \ \text{and}  \ \ \|\rho(t)\|_{L^{\infty}(S)}=O\left(\frac{1}{T_*-t}\right)\rightarrow +\infty
	$$
	as $t\rightarrow T_*^{-}$.
\end{Theorem}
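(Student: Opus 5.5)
The plan is to argue by characteristics, reduce each characteristic to a radial timelike geodesic, and write the Jacobian $J(t,\alpha)=\partial r(t,\alpha)/\partial\alpha$ of the characteristic map in closed form. I read the hypothesis $v_0=0$ as a condition at the foot $\alpha$ of each characteristic under consideration. The classification then comes from the condition $J>0$ for all time.

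\emph{Step 1: characteristics and conserved energy.} Let $r(t,\alpha)$ solve $\dot r=\frac{f(r)}{c}v$ with $r(0,\alpha)=\alpha$. Along it, $\dot v=\frac{f'(r)}{2c}(v^2-c^2)$. The quantity
\begin{equation*}
E^2:=\frac{f(r)}{1-v^2/c^2}
\end{equation*}
is conserved along each characteristic, because it is the Killing energy of the geodesic. When $v_0(\alpha)=0$ we get $E^2=f(\alpha)$. Then $\dot v(0)=-\frac{f'(\alpha)c}{2}<0$, so $v<0$ for $t>0$, and along the characteristic
\begin{equation*}
v=-c\sqrt{1-f(r)/f(\alpha)} .
\end{equation*}
The coordinate time needed to reach the horizon is infinite, so every characteristic exists for all $t\ge0$ and stays in $(2m,\alpha]$. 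Local existence and uniqueness of the $C^2\times C^1$ solution follow as usual from this transport structure. The solution continues as long as $J>0$. On the continuation criterion: $v$ stays bounded by $c$ by Step 1. Integrating the continuity equation along characteristics gives $\rho\, r^2 J\,(\text{explicit factor in }r,\alpha)=\rho_0(\alpha)\alpha^2$, so $\rho$ blows up exactly when $J\to0$.

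\emph{Step 2: Riccati equation and linearization.} Set $w=\partial_r v$. Differentiating the $v$-equation in $r$ gives
\begin{equation*}
\dot w=-\frac{f}{c}w^2+\frac{f''}{2c}(v^2-c^2),
\end{equation*}
with initial value $w(0)=v_0'(\alpha)$. Equivalently,
\begin{equation*}
\dot J=\frac{1}{c}\bigl(f'v+f w\bigr)J, \qquad J(0)=1,
\end{equation*}
so $J$ solves a linear second-order ODE. Since $r$ is strictly decreasing for $t>0$, I would switch the independent variable from $t$ to $r\in(2m,\alpha)$ (equivalently to $v$). In this variable both coefficients are explicit in $r$ and $\alpha$, through $\sqrt{1-f(r)/f(\alpha)}$. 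Because of linearity, one obtains
\begin{equation*}
J(r;\alpha)=A(r,\alpha)+v_0'(\alpha)\,B(r,\alpha).
\end{equation*}
Here $A$ and $B$ are the two fundamental solutions, computed by quadrature. These quadratures produce the $\arctan\sqrt{(r-2m)/2m}$ and $\sqrt{m/(r-2m)}$ terms that appear in $\chi_1$ and $\chi_2$. I expect $B>0$ on $(2m,\alpha)$, as the Wronskian-type argument should show.

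\emph{Step 3: threshold.} By Step 2, blowup occurs on this characteristic if and only if
\begin{equation*}
v_0'(\alpha)\le \inf_{r\in(2m,\alpha)}\Bigl(-\frac{A(r,\alpha)}{B(r,\alpha)}\Bigr).
\end{equation*}
The inequality is strict when the infimum is only approached as $r\to2m$, since that endpoint corresponds to $t=\infty$. This accounts for the asymmetry between $H_1$ ($\ge$) and $H_4$ ($\le$).

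The main obstacle is computing this infimum. I would differentiate $-A/B$ in $r$ and locate its critical points. The expectation is as follows.
\begin{itemize}
\item For $\alpha\le\frac{\sqrt{33}+3}{3}m$ the function is monotone, and the infimum is the limit as $r\to2m^+$. That limit should give $\chi_1(\alpha)$.
\item For $\alpha$ beyond that value, an interior minimum appears at the root $r_*=\frac{\sqrt{33}+3}{3}m$ of a quadratic arising from the derivative. Evaluating there yields the constant in $\chi_2$, which involves $\sqrt{\sqrt{33}-3}$ and $\arctan\sqrt{(\sqrt{33}-3)/6}$.
\end{itemize}
Checking the sign of this derivative is where I expect most of the computational effort.

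\emph{Step 4: blowup rate.} Suppose $(\alpha,v_0'(\alpha))\in\mathcal G$. Then $J$ reaches a first zero at some finite $T_*$, and the zero is simple. In the $r$-variable this holds because $\partial_r J\neq0$ at a first touching point, by uniqueness for the linear ODE. Hence $J\sim C(T_*-t)$. The relation $\dot J=\frac1c(f'v+fw)J$ then gives $w\sim-\frac{c}{f}(T_*-t)^{-1}$, and Step 1 gives $\rho\sim C'(T_*-t)^{-1}$. Taking the earliest such $T_*$ over all $\alpha\in\mathcal G$ gives the stated $L^\infty$ rates. If instead every $\alpha$ lies in $\mathcal F$, then $J>0$ for all $t$ on every characteristic, and the solution is global.
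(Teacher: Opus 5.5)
\textbf{Gap: Step 3 cannot produce the two-regime threshold.} Your Steps 1, 2 and 4 are sound. They follow the paper's route: characteristics, $J=r_\alpha$ affine in $v_0'(\alpha)$, and a threshold read off from a ratio. They also improve on it in two places. The Jacobi equation for $J$ is regular at $v=0$, whereas the paper must pass to the limit $v_0\to 0$ in the singular formula \eqref{ra2}. And $\rho r^2 J\,c^2/(c^2-v^2)=\rho_0\alpha^2$ is cleaner than $e^{-\int X}$.

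The interior extremum you expect for $\alpha>\frac{\sqrt{33}+3}{3}m$ cannot exist, and your own Step 2 shows why. $A$ and $B$ solve the same equation $\ddot J=p\dot J+qJ$ with $p=3f'(r)\dot r/f(r)$. Hence the Wronskian is $W=A\dot B-\dot A B=\frac{f(\alpha)}{c}\bigl(f(r)/f(\alpha)\bigr)^3>0$, and
\[
\frac{d}{dt}\Bigl(-\frac{A}{B}\Bigr)=\frac{W}{B^2}>0,\qquad B=-\frac{\alpha^2}{mc}\,\dot r>0 .
\]
The formula for $B$ holds because $\dot r$ is the time-translation Jacobi field, which also vanishes at $t=0$.

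So for every $\alpha>2m$, the ratio $-A/B$ increases strictly. It starts at $-\infty$ as $t\to 0^+$ and tends to an unattained supremum as $t\to\infty$ (where $r\to 2m$, $v\to -c$). Therefore $J$ vanishes in finite time if and only if $v_0'(\alpha)$ lies strictly below that limit. Two smaller slips:
\begin{itemize}
\item Your criterion should use $\sup$, not $\inf$; the infimum is $-\infty$, so the condition as written is vacuous.
\item $\frac{\sqrt{33}+3}{3}m$ is a value of $\alpha$, not a location $r_*$. It is the $\alpha$ at which the paper's critical point $v^2=\frac{(\sqrt{33}-3)mc^2}{3(\alpha-2m)}$ reaches $v^2=c^2$.
\end{itemize}

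\textbf{What a correct execution yields.} Carrying out your plan correctly gives a single regime for all $\alpha>2m$. Global existence along the characteristic holds if and only if
\[
v_0'(\alpha)\ \ge\ -\frac{mc(3\alpha-2m)}{2\alpha^2(\alpha-2m)}-\frac{3\sqrt{2}\,c}{4\alpha}\sqrt{\frac{m}{\alpha-2m}}\arctan\sqrt{\frac{\alpha-2m}{2m}} .
\]
This is neither $\chi_1$ nor $\chi_2$, so the proposal cannot deliver the statement as written.

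The paper's $\sqrt{33}$ dichotomy comes from the arctan coefficient in \eqref{ra0}. Letting $v_0\to0$ in \eqref{ra-}, the coefficient $\frac{3m\omega_0'}{(-D)^{5/2}}$ equals $\frac{3\sqrt{2}(\alpha-2m)^{1/2}}{4m^{1/2}c^3}$, which is half the printed value. With the corrected coefficient, the numerator of $d\tau/dv$ collapses to $-8m^2c^4$, consistent with the Wronskian.

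An independent check: for $v_0'(\alpha)=0$, one gets $\ddot J(0)=-\frac12\bigl(f'^2+ff''\bigr)(\alpha)=\frac{2m(\alpha-3m)}{\alpha^4}$. The corrected $r_\alpha(v;0,\alpha)$ reproduces this value; the printed one does not.

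A concrete instance: take $m=c=1$, $\alpha=4$, $v_0'(4)=-0.33$. This point lies in $R_2\cap H_3$, where the statement asserts global existence. Yet the threshold above is about $-0.304$, so $J$ vanishes in finite time on this characteristic.
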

\begin{remark}
	The main results of Theorem \ref{thm:1.1} can be summarized by Table \ref{eq}.
	\begin{table}[H]
		\centering
		\caption{The global existence and blowup for the case when $v_0(r)=0$}
		\label{eq}
		\vspace{5pt}
		\begin{tabular}{l|l|l}
			\hline
			\multicolumn{1}{c|}{$r$} & \multicolumn{1}{c|}{$v_0'(r)$} & \multicolumn{1}{c}{behavior of the solution} \\
			\hline
			\multirow{2}{*}{$\left(2m,\frac{\sqrt{33}+3}{3}m\right]$}& $\geq \chi_1(r)$& a unique classical solution on $[0, +\infty)\times \left(2m,+\infty\right) $ \\
			\cline{2-3}
			& $< \chi_1(r)$&$\rho$ and $\partial_{r}v$ blow up at $T_{*}$ \\
			\hline
			\multirow{2}{*}{$\left(\frac{\sqrt{33}+3}{3}m,+\infty\right)$}& $> \chi_2(r)$& a unique classical solution on $[0, +\infty)\times \left(2m,+\infty\right) $ \\
			\cline{2-3}
			& $\leq \chi_2(r)$&$\rho$ and $\partial_{r}v$ blow up at $T_{*}$ \\
			\hline
		\end{tabular}
	\end{table}
\end{remark}
\begin{remark}
	We illustrate the blowup phenomena of the relativistic dust evolving in the case when $v_0(r)=0$ by classifying the
	initial data and plotting the corresponding behaviors in Figure 1.
	According to Theorem \ref{thm:1.1}, if the initial data lie in $(R_1\cap H_1) \cup(R_2\cap H_3)$, which is the green domain, the classical solution exists globally.
	If the initial data lie in $(R_1\cap H_2) \cup(R_2\cap H_4)$, which is the red domain, the classical solution blows up in a finite time $T_*$, and as $t \to T_*$, we have
	$$v_r\rightarrow -\infty\ \ \text{and} \ \ \rho\rightarrow +\infty.
	$$
    \begin{figure}
	\begin{center}
		\includegraphics[width=0.6\linewidth]{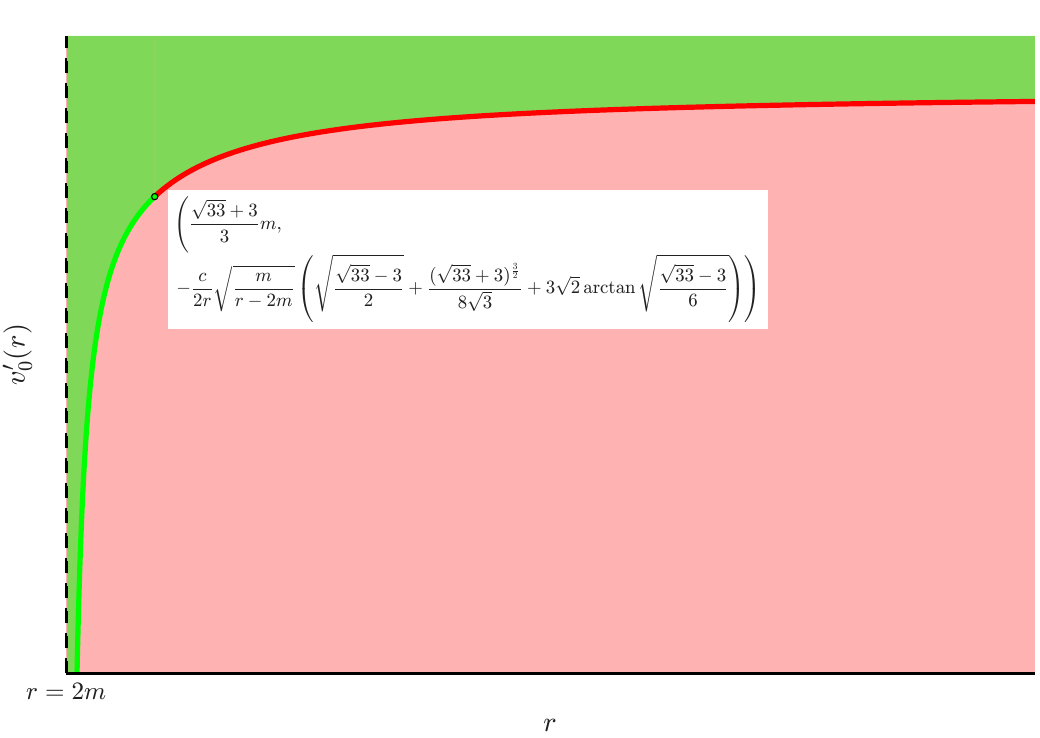}
	\end{center}
    \caption{Global existence and blowup domain in the case when $v_0(r)=0$}
    \end{figure}
\end{remark}
\subsubsection{Main results of relativistic dust in the case $v_0(r)\neq0$}
In this case, before presenting the main results, we need to introduce some notations. In the interval $r>2m>0$, we denote
\begin{equation*}
	\omega(t,r)=\frac{v^2-c^2}{1-\frac{2m}{r}},\quad D(r)=\omega_0(r)+c^2,\quad \omega_0(r):=\omega(0,r)=\frac{v^2_0(r)-c^2}{1-\frac{2m}{r}}.
\end{equation*}
We have
\begin{equation*}
	\omega_0'(r)=\frac{2r(r-2m) v_0(r)v'_0(r)-2m(v_0^2(r)-c^2)}{(r-2m)^2},
\end{equation*}
and
\begin{equation*}
	\omega_0''(r)=\frac{4v_0(r)v'_0(r)+2r\left[\left(v'_0(r)\right)^2+v_0(r)v''_0(r)\right]}{r-2m}-\frac{4r v_0(r)v'_0(r)}{(r-2m)^2}+\frac{4m(v_0^2(r)-c^2)}{(r-2m)^3}.
\end{equation*}
When $v_0(r)\neq0$, we define the following set of the initial data in $r>2m$, which leads to different dynamics of the solution.
\begin{equation*}
	V_+=\{v_0(r)>0\},\ V_0=\{v_0(r)=0\},\ V_-=\{v_0(r)<0\},
\end{equation*}
\begin{equation*}
	\Omega_+=\{\omega_0'(r)>0\},\ \Omega_0=\{\omega_0'(r)=0\},\ \Omega_-=\{\omega_0'(r)<0\},
\end{equation*}
\begin{equation*}
	D_+=\{D(r)>0\},\ D_0=\{D(r)=0\},\ D_-=\{D(r)<0\},
\end{equation*}
\begin{equation*}
	\Omega_1=\left\{\omega_0'(r)\leq \chi_3(r)\right\},\ \Omega_2=\left\{\omega_0'(r)>\chi_3(r)\right\},
\end{equation*}
where
$$\chi_3(r)=2m(c^2-v_0^2(r))v_0(r)\int_{v_0(r)}^{-c}\frac{ds}{s^2(s^2-D(r))^2}.$$
Let
\begin{equation*}
	E=V_-\cap \Omega_+,
\end{equation*}
\begin{equation*}
	F_1=V_+\cap(\Omega_+\cup \Omega_0)\cap (D_+\cup D_0),\ F_2=V_-\cap(\Omega_-\cup\Omega_0),\
\end{equation*}
\begin{equation*}
	 G_1=V_+\cap\Omega_0\cap D_-,\ G_2=V_+\cap\Omega_-.
\end{equation*}
\begin{Theorem}\label{thm:1.2}
	When $v_0(r)\neq0$, the Cauchy problem \eqref{main-eq1} admits a unique classical solution on $[0, +\infty)\times \left(2m,+\infty\right) $, provided all the initial data lie in the set
	$$\mathcal{F}=(E\cap\Omega_1) \cup F_1\cup F_2 $$
    for all $r>2m$. 	If the initial data lie in the set
	$$\mathcal{G}=(E\cap\Omega_2) \cup G_1\cup G_2,$$
	then there exists a $0<T_*<+\infty$ such that
	$$\|\partial_{r}v(t)\|_{L^{\infty}(S)}=O\left(\frac{1}{T_*-t}\right)\rightarrow -\infty\ \ \text{and} \ \ \|\rho(t)\|_{L^{\infty}(S)}=O\left(\frac{1}{T_*-t}\right)\rightarrow +\infty
	$$
	as $t\rightarrow T_*^{-}$. 	If the initial data lie in the set
	$$\mathcal{H}=V_+\cap\Omega_+\cap D_-,$$
	for some $r^{*}\in(2m,\infty)$, there exists a time $0<t^*<+\infty$ such that $v(t^*,r^{*})=0$ and the classical solution can be extended to $t^*$. The occurrence of blowup is determined by the behavior of the solution at $t=t^*$, in accordance with the blowup criterion established in Theorem \ref{thm:1.1}.
\end{Theorem}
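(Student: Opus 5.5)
The plan is to integrate \eqref{main-eq1} along characteristics and reduce everything to the sign of a single Jacobian. Let $r=r(t,\alpha)$ solve $\dot r=\frac{f(r)}{c}v$, $r(0,\alpha)=\alpha$. Along this curve $\dot v=\frac{f'(r)}{2c}(v^2-c^2)$.

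\textbf{Step 1: conservation law and the ODE for $v$.} A direct computation shows that $\omega=(v^2-c^2)/f(r)$ is constant along characteristics, so $\omega=\omega_0(\alpha)<0$. Solving $1-\frac{2m}{r}=\frac{v^2-c^2}{\omega_0}$ gives the closed relation
\begin{equation*}
r=\frac{2m\,\omega_0(\alpha)}{D(\alpha)-v^2},\qquad f'(r)=\frac{2m}{r^2}=\frac{(D-v^2)^2}{2m\,\omega_0^2}.
\end{equation*}
Thus $v$ obeys the autonomous scalar ODE
\begin{equation*}
\dot v=\frac{(D-v^2)^2(v^2-c^2)}{4mc\,\omega_0^2}<0 .
\end{equation*}
Hence $v$ is strictly decreasing, and time can be traded for $v$:
\begin{equation*}
t=\psi(v,\alpha):=\int_{v_0(\alpha)}^{v}\frac{4mc\,\omega_0^2(\alpha)\,ds}{(D(\alpha)-s^2)^2(s^2-c^2)} .
\end{equation*}
This is the implicit function framework $\psi(v,\alpha)=0$, shifted by $t$, announced in the introduction.

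\textbf{Step 2: long-time behaviour of $v$, by sign of $v_0$ and $D$.} The sign of $D-s^2$ decides whether $\psi$ stays finite.
\begin{itemize}
\item If $v_0<0$, then $v\to -c$ as $t\to\infty$, and $r$ decreases to $2m$ without reaching it in finite $t$.
\item If $v_0>0$ and $D\ge 0$ (so $v_0^2<D$), then $v\downarrow\sqrt D$ as $t\to\infty$ and $r\to\infty$, because $D-s^2$ vanishes to second order.
\item If $v_0>0$ and $D<0$, then $v$ reaches $0$ at a finite $t^*$, with $r$ at its maximum there.
\end{itemize}
The last case is the set $\mathcal H$. At $t^*$ the data are of the form treated in Theorem \ref{thm:1.1}, which we restart from. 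In every case the characteristics remain in $(2m,\infty)$ for all time.

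\textbf{Step 3: reduce blowup to vanishing of $\partial_\alpha r$.} Differentiating the $\rho$-equation along characteristics gives
\begin{equation*}
\rho(t,r(t,\alpha))=\frac{\rho_0(\alpha)\,\alpha^2}{r^2\,\partial_\alpha r}\times(\text{smooth positive factor}),\qquad \partial_r v=\frac{\partial_\alpha v}{\partial_\alpha r}.
\end{equation*}
So global existence is equivalent to $\partial_\alpha r>0$ for all $t$. Finite-time blowup at rate $O(1/(T_*-t))$ follows if $\partial_\alpha r$ has a simple zero at some $T_*<\infty$, with $\partial_\alpha v\ne0$ there.

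To compute $\partial_\alpha r$, write $\partial_\alpha v=-\partial_\alpha\psi/\partial_v\psi$, obtained from the implicit function theorem at fixed $t$. Substituting into $\partial_\alpha r=\partial_\alpha\big[2m\omega_0/(D-v^2)\big]$ gives $\partial_\alpha r$ as an explicit function $J(v,\alpha)$, with $v$ ranging over the interval found in Step 2. This function involves $\omega_0'$ and $v_0'$ (equivalently $\omega_0'$, since $v_0\ne0$), and the integral $\int_{v_0}^{v}\frac{ds}{s^2(s^2-D)^2}$ produced by differentiating the integrand in $\alpha$ after an integration by parts. It also involves the boundary term at $s=v_0$, which is where $v_0\neq 0$ is needed.

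\textbf{Step 4: sign analysis case by case (main obstacle).}
\begin{itemize}
\item On $F_1$ and $F_2$, every term in $J$ has a fixed favourable sign, so $J>0$ throughout the $v$-range. For $F_2$ the signs are $v_0<0$, $\omega_0'\le0$; for $F_1$ they are $v_0>0$, $\omega_0'\ge0$, $D\ge0$, with $v\in(\sqrt D,v_0]$.
\item On $G_2$, where $\omega_0'<0$ and $v_0>0$, and on $G_1$, where $D<0$ and $\omega_0'=0$, the dominant term forces $J\to -\infty$ or to a negative value before $v$ reaches the end of its range. By continuity $J$ then vanishes at some finite $T_*$.
\item On $E$, where $v_0<0$ and $\omega_0'>0$, the two contributions compete. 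I expect $J$ to be monotone in $v$ on $(-c,v_0)$, so positivity for all time is equivalent to positivity of the limit as $v\to -c$. This limit condition is exactly $\omega_0'\le\chi_3$, which reproduces $\Omega_1$ versus $\Omega_2$.
\end{itemize}
Establishing this monotonicity, and the precise equality case at $v=-c$ (where $t\to\infty$), is the delicate point. I would first try to show directly that $\partial_v J$ has a fixed sign. Failing that, I would rewrite $J=0$ as $\omega_0'=\Phi(v,\alpha)$ and show that $\Phi$ is monotone with limit $\chi_3$ as $v\to-c$.

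Finally, nondegeneracy of the zero of $J$ gives the $O\big((T_*-t)^{-1}\big)$ rates for both $\partial_r v$ and $\rho$.
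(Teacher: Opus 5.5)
This is essentially the paper's argument: the conserved $\omega$, the autonomous equation \eqref{v'1}, and $r_\alpha$ written as an explicit function of $v$. It is followed by a sign analysis over $E,F_1,F_2,G_1,G_2$ and the same restart at $t^*$ for $\mathcal H$. Your derivation of $J$ (implicit differentiation of $t=\psi(v,\alpha)$, then $4\int_{v_0}^{v}(D-s^2)^{-3}ds=\bigl[s^{-1}(D-s^2)^{-2}\bigr]_{v_0}^{v}+\int_{v_0}^{v}s^{-2}(D-s^2)^{-2}ds$) reproduces \eqref{ra2} exactly. The paper instead integrates the linear ODE \eqref{ra} and substitutes $t\mapsto v$. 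Your Lagrangian formula $\rho=\rho_0(\alpha)\alpha^2f(r)/\bigl(f(\alpha)r^2r_\alpha\bigr)$ is cleaner than the paper's $\rho_0e^{-\int X}$ with Corollary~\ref{cor:2.1}. It gives the $O((T_*-t)^{-1})$ rate directly once the zero of $r_\alpha$ is simple: by \eqref{ra}, $\dot r_\alpha=f^2\omega_0'/(2cv)\neq0$ there, and on $G_1$, $r_\alpha\propto v$ with $\dot v(t^*)\neq0$.

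The step you flag as the main obstacle is easy, but your first strategy and your predicted threshold are both off. $J$ need not be monotone on $(-c,v_0)$, since the prefactor $v(c^2-v^2)$ turns at $v=-c/\sqrt3$. Moreover $J\to0$ as $v\to-c$ in every case, so the sign of its limit decides nothing. Your fallback is the right idea. Write $J=\frac{v(c^2-v^2)}{v_0(c^2-v_0^2)}\bigl[1-\omega_0'K(v)\bigr]$ with $K(v)=2m(c^2-v_0^2)v_0\int_{v_0}^{v}\frac{ds}{s^2(s^2-D)^2}$, so that $\Phi=1/K$. On $E$ the prefactor is positive. Because the integrand is positive and $v_0<0$, $K$ increases strictly from $K(v_0)=0$ to $K(-c)=\chi_3$ as $v\downarrow-c$, i.e.\ as $t\uparrow\infty$. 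Hence $r_\alpha>0$ for all finite $t$ iff $1-\omega_0'\chi_3\ge0$; otherwise the bracket, strictly monotone in $t$, has a simple zero at a finite time.

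The criterion is therefore $\omega_0'\le 1/\chi_3$, not $\omega_0'\le\chi_3$, so $\Phi\to1/\chi_3$, not $\chi_3$. This is what the paper's proof actually uses: Cases 1--3 of Section~\ref{subsubsec:3.2.3} compare $\omega_0'$ with $1/Dis(\alpha)$, and $Dis=\chi_3$. The set $\Omega_1$ as printed in the statement is even dimensionally inconsistent: $\chi_3$ has units length$/$velocity$^2$, while $\omega_0'$ has units velocity$^2/$length. It must be read with $1/\chi_3$. So your plan goes through, but it proves the corrected statement, not the literal $\chi_3$ criterion you anticipate. (Minor: in Step 2 one has $v_0^2>D$, since $v_0^2-D=2m(c^2-v_0^2)/(\alpha-2m)>0$.)
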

\begin{remark} The blowup time $T_*$ stated through the whole paper may differ from each other.
	The main result of Theorem \ref{thm:1.2} can be summarized in Table \ref{neq}.
\begin{table}[H]
	\centering
	\caption{The global existence and blowup for $v_0(r)\neq0$}
	\label{neq}
	\vspace{5pt}
	\begin{tabular}{l|l|l|l}
		\hline
		\multicolumn{1}{c|}{$v_0(r)$} & \multicolumn{1}{c|}{$\omega_0'(r)$} & \multicolumn{1}{c|}{$D(r)$} & \multicolumn{1}{c}{behavior of the solution} \\
		\hline
		\multirow{5}{*}{$>0$} & \multirow{2}{*}{$>0$} & $\geq0$ & a unique classical solution on $[0, +\infty)\times \left(2m,+\infty\right) $ \\
		\cline{3-4}
		&                     & $<0$      & depend on the solution $v(t^*,r)$ and Theorem \ref{thm:1.1}\\
		\cline{2-4}
		& \multirow{2}{*}{=0} & $\geq0$ & a unique classical solution on $[0, +\infty)\times \left(2m,+\infty\right) $ \\
		\cline{3-4}
		&                     & $<0$      & $\rho$ and $v_{r}$ blow up at $T_{*}$ \\
		\cline{2-4}
		& $<0$                  & \multicolumn{2}{c}{$\rho$ and $v_{r}$ blow up at $T_{*}$} \\
		\hline
		\multirow{2}{*}{$<0$} & \multicolumn{2}{c|}{$\omega_0'(r)>\chi_3(r)$}      & $\rho$ and $v_{r}$ blow up at $T_{*}$ \\
		\cline{2-4}
		& \multicolumn{2}{c|}{$\omega_0'(r)\leq \chi_3(r)$} & a unique classical solution on $[0, +\infty)\times \left(2m,+\infty\right) $ \\
		\hline
	\end{tabular}
\end{table}
\end{remark}
\begin{remark}
	To explicitly illustrate how the solution behavior primarily depends on the initial data, we can plot them in terms of $v_0(r)$ and $\omega'_0(r)$ for a fixed $r\in [\alpha_{min}, \alpha_{max}]$, see Figure 2. According to Theorem \ref{thm:1.2}, if the initial data lie in $(E\cap\Omega_1) \cup F_1\cup F_2$, which is the green domain, the classical solution exists globally. If the initial data lie in $(E \cap \Omega_2) \cup G_1\cup G_2$, which is the red domain, the classical solution blows up in a finite time $T_*$, and as $t \to T_*$, we have
	$$v_r\rightarrow -\infty\ \ \text{and} \ \rho\rightarrow +\infty.
	$$
	If the initial data lie in $V_+\cap\Omega_+\cap D_-$, which is the yellow domain, the classical solution can be extended to the time $0<t^*<+\infty$ such that $v(t^*,r^*)=0$. In this case, the blowup criterion of Theorem \ref{thm:1.1} can be applied to $v(t^*,r)$ to determine whether the solution blows up or exists globally after $t^*$.
    \begin{figure}
    \begin{center}
    	\includegraphics[width=0.6\linewidth]{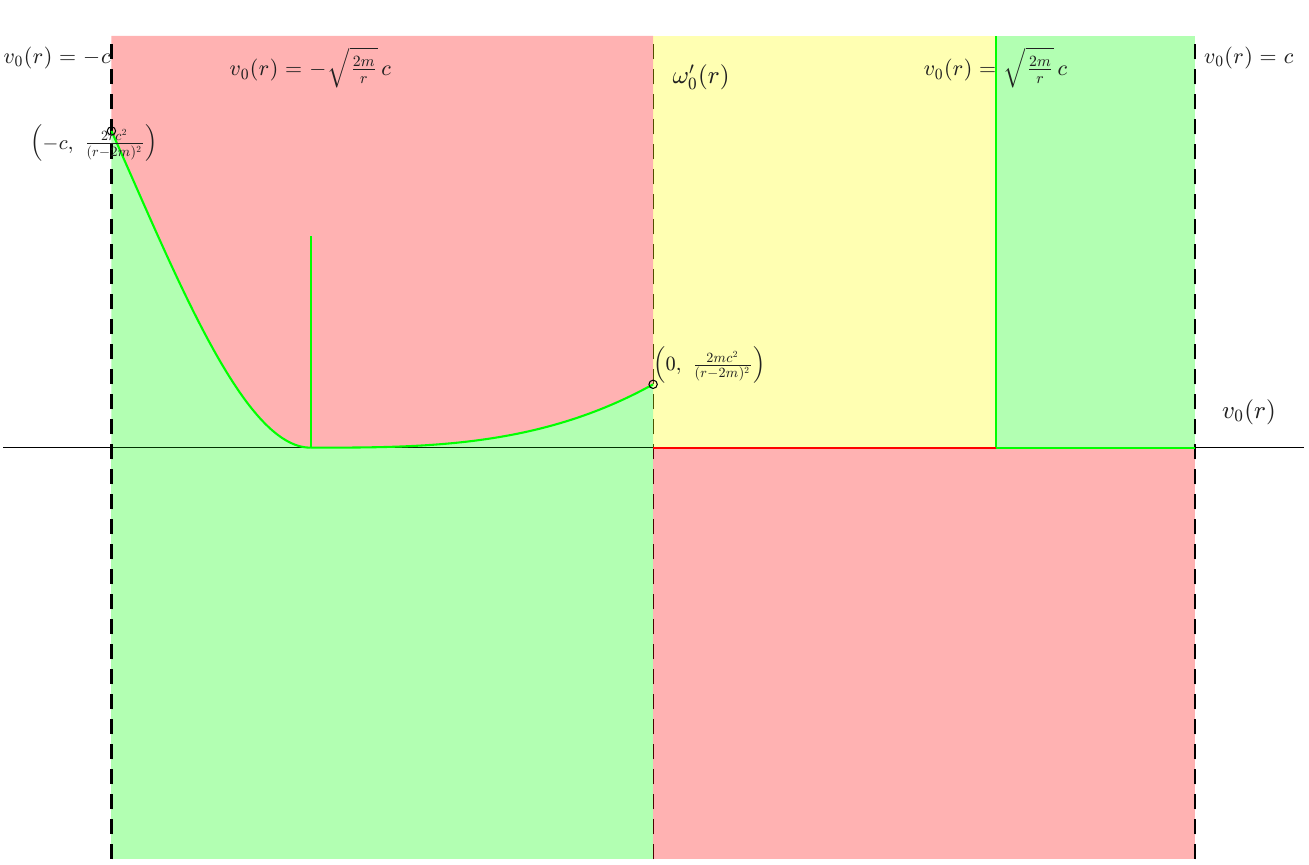}
    \end{center}
    \caption{Global existence and blowup domain for $v_0(r)\neq0$}
    \end{figure}
\end{remark}
\begin{remark}\label{rmk: type I}
As we have seen from Theorem \ref{thm:1.1} and Theorem \ref{thm:1.2}, in Schwarzschild spacetime the dust fluid only admits \emph{Type I} blowup (see its definition in \cite{Liu-Qu-Wang-Wei}), namely $\partial_{r}v$ and $\rho$ blow up. This is in contrast to Schwarzschild-anti de Sitter spacetimes where dust fluids admit \emph{type II} blowup, namely $\partial_{r}^{2}v$ and $\partial_{r}\rho$ blow up (also see \cite{Liu-Qu-Wang-Wei}).
\end{remark}

\subsection{Strategy of the proof}
We prove the main results using the method of characteristics via following steps.
\begin{itemize}
\item Along the characteristic curves defined by \eqref{r'}, we obtain the non-autonomous ODEs \eqref{v'} and \eqref{curve}. These two equations yield a fundamental conserved quantity $\omega$ as defined in \eqref{omega1}, which establishes an explicit geometric link between the velocity $v$, the characteristic curve $r$ and the metric component $f(r)$. This conserved quantity implies that the coordinate blowup ($f(r)\rightarrow 0$) is equivalent to the fluid singularity $|v|\rightarrow c$. This important observation indicates that the fluid speed approaches the speed of light as it reaches the boundary of the event horizon at $r=2m$ in Schwarzschild spacetime.

\item Evaluating the first-order spatial derivative $v_r$ along the characteristics $r=r(t,\alpha)$ shows that its finite-time explosion is driven by either the vanishing of the velocity field ($v\rightarrow 0$) or the convergence of characteristic lines ($r_{\alpha}\rightarrow 0$), see the detailed analysis in Subsection \ref{subsec2}. To study the trajectory of the velocity $v$, we derive an implicit function \eqref{psi}, based on which we obtain the exact dynamical trajectory of the velocity for different initial data settings in Subsection \ref{subsec3.1}.

\item For $r_\alpha$, we find that it satisfies a singular ODE \eqref{ra} when $v\rightarrow0$, which leads to a singular integral \eqref{ra1}. We resolve this obstruction by iteratively testing the regularizing properties of the system and performing a variable transformation of the integral variable from $t$ to $v$.

\item We discuss the dynamical behavior of $r_{\alpha}$ in Subsection \ref{subsubsec:3.2.3}, considering eight distinct regimes of the initial data.

\item  When $v_0(r)=0$,we demonstrate local well-posedness for the singular system governing $r_\alpha$, see Case 4 in Subsection \ref{subsubsec:3.2.3}. Furthermore, we find a critical curve depending not only on the monotonicity of the initial data, but also on the location of the initial fluids as well as the mass $m$ of the black hole.

\item We prove the main theorems by the classical characteristic method and provide a classification of the initial data leading to different dynamics according to the behavior of $r_{\alpha}$ discussed in the third item.
\end{itemize}
\begin{remark}
  Compared with \cite{Guo-Hadzic-Jang}, we find that for relativistic dust in a fixed Schwarzschild spacetime, a singular and non-autonomous dynamical system constitutes a main obstruction, whereas this singular system does not appear in Schwarzschild-de Sitter or Schwarzschild-Anti-de Sitter spacetimes (see \cite{Liu-Qu-Wang-Wei}). Furthermore, due to the linear degeneracy of relativistic dust model, the blowup phenomena here differs from both implosion and classical shocks.
\end{remark}

\subsection{Arrangement of the paper}
We arrange the paper as follows. In Section \ref{sec:2}, we present some necessary lemmas. Section \ref{sec:3} focuses on the detailed analysis of the solution formulas and their dynamical behaviors along the characteristic line. In the last section, we prove the two main results.

\section{Preliminaries}\label{sec:2}
\noindent In this section, we will provide some necessary preparations for the subsequent analysis. This includes the derivation of the relativistic dust in Schwarzschild spacetimes, as well as the introduction of several important lemmas. These elements are crucial for the proof of the results that will be presented later.
\subsection{Derivation of the relativistic dust in Schwarzschild space-time}
By imposing the normalization constraint on the velocity field, we obtain the following expression in the fixed background metric given by \eqref{metric}
\begin{equation*}
	-1=-f(r)(u^0)^2+f^{-1}(r)(u^1)^2.
\end{equation*}
Define the classical velocity $v=v(t,r)$ as
\begin{equation*}
	v(t,r):= cf^{-1}(r)\frac{u^1}{u^0}.
\end{equation*}
Direct calculations yield
\begin{equation*}
	(u^0)^2=f^{-1}(r)\frac{c^2}{c^2-v^2},\;
	(u^1)^2=f(r)\frac{v^2}{c^2-v^2}.
\end{equation*}

Then we can get the non-zero components of the energy momentum tensor $ T^{\mu\nu} $ as
\begin{align*}
	&T^{00}=f^{-1}(r)\frac{\rho c^4+pv^2}{c^2-v^2},
	\quad T^{01}=T^{10}=\frac{(\rho c^2+p)cv}{c^2-v^2},\\
	&T^{11}=f(r)\frac{(\rho c^2+p)c^2}{c^2-v^2},\quad
	T^{22}=\frac{p}{r^2},\quad
	T^{33}=\frac{p}{r^2\sin^2\theta}.
\end{align*}

Expanding the relativistic Euler equation \eqref{2.1} directly, we have
\begin{equation*}
	\frac{\partial T^{\eta\nu} }{\partial x^\eta}+\Gamma_{\eta\mu}^\eta T^{\mu\nu}+\Gamma_{\eta\mu}^\nu T^{\eta\mu}=0.
\end{equation*}

By letting the pressure $p = 0$, we can obtain the relativistic dust equations
\begin{equation}\label{ME1}
	\begin{cases}
		\frac{\partial}{\partial t}\left(\frac{f^{-1}(r)\rho c^4}{c^2-v^2}\right) + \frac{\partial}{\partial r}\left(\frac{\rho c^3v}{c^2-v^2}\right) + \left(\frac{2}{r} + \frac{f'(r)}{f(r)}\right)\frac{\rho c^3v}{c^2-v^2} = 0,\\
		\frac{\partial}{\partial t}\left(\frac{\rho c^3v}{c^2-v^2}\right) + \frac{\partial}{\partial r}\left(\frac{f(r)\rho c^2v^2}{c^2-v^2}\right) + \frac{1}{2}f'(r)\frac{\rho c^4}{c^2-v^2}+ \left[\frac{2f(r)}{r}-\frac{1}{2}f'(r)\right]\frac{\rho c^2v^2}{c^2-v^2}\ = 0.
	\end{cases}
\end{equation}
In the existence domain of the classical solution, system \eqref{ME1} is equivalent to
\begin{equation}\label{eq1}
	\begin{cases}
		\partial_t v+\frac{vf(r)}{c}\partial_r v-\frac{f'(r)}{2c}(v^2-c^2)=0,\\
		\partial_t\rho+\frac{vf(r)}{c}\partial_r\rho+\rho\left(\frac{f(r)}{c}\partial_rv+\frac{2f(r)v}{cr}\right)=0.
	\end{cases}
\end{equation}

\subsection{Some necessary lemmas}

The first lemma is the well-known Hadamard's lemma:
\begin{Lemma}\label{rem:2.1}
	A $C^{1}$ \textit{mapping} $x:\mathbb{R}^n\to \mathbb{R}^n,x=\varphi(\alpha)$ is a global $C^{1}$
	diffeomorphism if and only if
	\begin{itemize}
		\item $\varphi$ is a proper mapping, namely,
		$$|\varphi(\alpha)|\to\infty \quad as \quad|\alpha|\to\infty .$$
		\item $\varphi$ is a local $C^1$ diffeomorphism, i.e.,
		$$\det\left(\frac{\partial\varphi_i(\alpha)}{\partial\alpha_j}\right)\neq0,\quad\forall \alpha\in\mathbb{R}^n.$$
	\end{itemize}
\end{Lemma}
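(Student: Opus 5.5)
The statement is Hadamard's global inverse function theorem for a $C^1$ map $\varphi:\mathbb{R}^n\to\mathbb{R}^n$. Necessity is immediate. If $\varphi$ is a global $C^1$ diffeomorphism, the chain rule applied to $\varphi^{-1}\circ\varphi=\mathrm{id}$ shows that $D\varphi(\alpha)$ is invertible, so its determinant never vanishes. For properness, note that $\varphi^{-1}$ is continuous and so maps compact sets to compact sets. Hence $\varphi^{-1}(\overline{B_R})$ is bounded for every $R$, which is exactly $|\varphi(\alpha)|\to\infty$ as $|\alpha|\to\infty$.

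For sufficiency, I would argue topologically via covering spaces. First, by the local inverse function theorem, $\varphi$ is a local $C^1$ diffeomorphism; in particular it is an open map. Second, properness (in the sense that preimages of bounded sets are bounded, so preimages of compact sets are compact) implies that $\varphi$ is a closed map. Consequently $\varphi(\mathbb{R}^n)$ is both open and closed and nonempty, so $\varphi$ is surjective since $\mathbb{R}^n$ is connected.

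Third, I would show that a proper local homeomorphism is a covering map; this is the main step. Fix $y$. The fiber $\varphi^{-1}(y)$ is compact and discrete (local injectivity), hence finite, say $\{x_1,\dots,x_k\}$. Choose disjoint neighborhoods $U_i\ni x_i$ on which $\varphi$ is a homeomorphism onto an open set. By closedness, $\varphi(\mathbb{R}^n\setminus\bigcup U_i)$ is a closed set not containing $y$. So there is a connected open neighborhood $W$ of $y$ inside $\bigcap_i\varphi(U_i)$ that avoids this set. Then $\varphi^{-1}(W)=\bigsqcup_i(U_i\cap\varphi^{-1}(W))$, and each piece maps homeomorphically onto $W$. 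Thus $W$ is evenly covered. The delicate points are exactly here: finiteness of the fiber and the use of closedness to exclude far-away preimages.

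Finally, $\mathbb{R}^n$ is simply connected and path connected, so any connected covering of it is one-sheeted by the lifting criterion. Hence $\varphi$ is bijective. Its inverse is continuous because $\varphi$ is open, and it is $C^1$ by the local inverse function theorem. Therefore $\varphi$ is a global $C^1$ diffeomorphism. In the paper this is needed only for $n=1$, where a simpler argument also works: nonvanishing $\varphi'$ gives strict monotonicity, and properness then forces $\varphi(\mathbb{R})=\mathbb{R}$.
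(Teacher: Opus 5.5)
Your proof is correct. The paper gives no proof of this lemma: it quotes it as ``the well-known Hadamard's lemma'' (Hadamard's global inverse function theorem) and never cites it explicitly later. So you are not taking a different route from an argument in the paper; you are supplying the missing justification, and the covering-space route you chose is the standard one.

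Its advantage is that it shows exactly where each hypothesis enters:
\begin{itemize}
\item the Jacobian condition gives openness and local injectivity;
\item properness gives closedness, hence surjectivity by connectedness of $\mathbb{R}^n$, and it gives finiteness of fibres, hence evenly covered neighbourhoods;
\item simple connectivity of $\mathbb{R}^n$ turns the covering into a bijection.
\end{itemize}

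The two steps you flag as delicate are handled correctly. A compact discrete fibre is finite. Removing the closed set $\varphi(\mathbb{R}^n\setminus\bigcup_i U_i)$, which misses $y$, is precisely what excludes stray preimages of points of $W$.

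The one ingredient you state without proof, that a proper continuous map on $\mathbb{R}^n$ is closed, takes two lines. Let $F$ be closed, $x_k\in F$ and $\varphi(x_k)\to y$. For large $k$ the $x_k$ lie in the compact set $\varphi^{-1}(\overline{B_1(y)})$. A subsequence then converges to some $x\in F$, and continuity gives $\varphi(x)=y$.

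Your closing one-dimensional remark is right for maps $\mathbb{R}\to\mathbb{R}$. Be aware, though, that the map the paper implicitly relies on, $\alpha\mapsto r(t,\alpha)$, lives on the half-line $(2m,\infty)$, not on $\mathbb{R}$. There ``proper'' has to be read as sending $\alpha\to 2m^+$ and $\alpha\to+\infty$ to the corresponding ends of $(2m,\infty)$ before monotonicity yields surjectivity.
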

The next lemma is the well-known mean value theorems in Calculus, which is important for estimating the blowup behavior near the blowup point.
\begin{Lemma}\label{lem:2.2}
	If $f(x)$ and $g(x)$ are continuous functions defined on the interval $[a,b]$, and $g(x)$ does not change its sign on $[a,b]$, then there exists $\theta\in [a,b]$ such that
	\begin{equation}
		\int_{a}^{b} f(x)g(x) dx =f(\theta)\int_{a}^{b} g(x) dx.
	\end{equation}
\end{Lemma}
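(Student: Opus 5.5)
This is the integral mean value theorem in its weighted form. The plan is to pin the integral between the extreme values of $f$ times $\int_a^b g$, and then use the intermediate value property of $f$ to produce $\theta$.

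\emph{Reduction to $g\ge 0$.} By replacing $g$ with $-g$ if necessary, we may assume $g\geq 0$ on $[a,b]$. This is allowed because both sides of the identity are linear in $g$, so the sign flip changes neither side's truth.

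\emph{Two-sided bound.} Since $f$ is continuous on the compact interval $[a,b]$, it attains a minimum $M_1=f(x_1)$ and a maximum $M_2=f(x_2)$ for some $x_1,x_2\in[a,b]$. Because $g\ge 0$, we have the pointwise inequalities $M_1 g(x)\le f(x)g(x)\le M_2 g(x)$. The products are continuous, hence integrable, so integrating gives
\begin{equation*}
M_1 I\le \int_a^b f(x)g(x)\,dx\le M_2 I,\qquad I:=\int_a^b g(x)\,dx\ge 0 .
\end{equation*}

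\emph{Degenerate case $I=0$.} If $I=0$, the displayed inequalities force $\int_a^b fg=0$, so every choice of $\theta\in[a,b]$ works.

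\emph{Main case $I>0$.} If $I>0$, divide by $I$. The quotient $\lambda:=I^{-1}\int_a^b fg$ then satisfies $M_1\le\lambda\le M_2$. The function $f$ is continuous on the closed interval between $x_1$ and $x_2$, which lies inside $[a,b]$, and takes the values $M_1$ and $M_2$ at its endpoints. By the intermediate value theorem there is a $\theta$ in that interval with $f(\theta)=\lambda$. This $\theta$ lies in $[a,b]$, and multiplying back by $I$ gives the identity.

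There is no real obstacle here. The only points needing care are that the sign condition on $g$ is used exactly once, to preserve the inequalities when multiplying by $g$, and that the degenerate case $I=0$ is handled separately.
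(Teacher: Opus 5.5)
Your proof is correct and complete. The reduction to $g\ge 0$, the squeeze $(\min f)\int_a^b g\le\int_a^b fg\le(\max f)\int_a^b g$, the separate handling of $\int_a^b g=0$, and the intermediate value theorem form the standard argument; the paper gives no proof of its own and simply cites this as the well-known weighted mean value theorem for integrals, so your argument is what that citation relies on.
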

According to Lemma \ref{lem:2.2}, we have the following corollary

\begin{Corollary}\label{cor:2.1}
	If $f(x)$ is a continuous function and does not change its sign on the interval $[-\delta_1,-\delta_2]$, where $\delta_1\geq0$ and $\delta_2\geq 0$, we have
	\begin{equation*}
		\int_{-\delta_1}^{-\delta_2} \frac{f(x)}{x} dx= f(\theta)\ln \frac{\delta_2}{\delta_1},
	\end{equation*}
	where $\theta \in [-\delta_1, -\delta_2]$. Moreover, if $f(x)>0$ on $[-\delta_1,0]$, we have
	\begin{equation*}
		\lim_{\delta_2\rightarrow 0}\int_{-\delta_1}^{-\delta_2} \frac{f(x)}{x} dx\rightarrow -\infty.
	\end{equation*}
\end{Corollary}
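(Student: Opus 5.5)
The statement to prove is Corollary \ref{cor:2.1}. I will apply Lemma \ref{lem:2.2} with the roles of the two factors swapped. The factor $1/x$ is continuous on $[-\delta_1,-\delta_2]$ and has constant (negative) sign there, provided $\delta_2>0$. This is the implicit requirement for the integral to be proper, since $\delta_1\ge \delta_2$ is needed for the interval to make sense. So I take $g(x)=1/x$ as the sign-definite weight and the given continuous $f$ as the other factor. Lemma \ref{lem:2.2} then produces $\theta\in[-\delta_1,-\delta_2]$ with
\begin{equation*}
\int_{-\delta_1}^{-\delta_2}\frac{f(x)}{x}\,dx=f(\theta)\int_{-\delta_1}^{-\delta_2}\frac{dx}{x}.
\end{equation*}
For the remaining integral, the antiderivative of $1/x$ on the negative axis is $\ln|x|$, which gives $\ln\delta_2-\ln\delta_1=\ln(\delta_2/\delta_1)$. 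This yields the first formula. The hypothesis that $f$ does not change sign is not actually needed for this step; only the sign condition on the weight matters. I will note this but keep the statement as given. The degenerate cases $\delta_1=\delta_2$ (both sides vanish) and $\delta_1=0$ (the integral is not defined) will be set aside.

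For the limit, fix $\delta_1>0$ and assume $f>0$ on $[-\delta_1,0]$. By continuity on this compact interval, $f\ge \mu:=\min_{[-\delta_1,0]}f>0$. For each $\delta_2\in(0,\delta_1)$, the first part gives $\theta=\theta(\delta_2)\in[-\delta_1,-\delta_2]\subset[-\delta_1,0]$, so $f(\theta)\ge\mu$. Since $\ln(\delta_2/\delta_1)<0$, this gives
\begin{equation*}
\int_{-\delta_1}^{-\delta_2}\frac{f(x)}{x}\,dx=f(\theta)\ln\frac{\delta_2}{\delta_1}\le \mu\ln\frac{\delta_2}{\delta_1}\longrightarrow-\infty\quad(\delta_2\to0^+).
\end{equation*}

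The only point needing care is that $\theta$ depends on $\delta_2$. The uniform lower bound $\mu$ is what makes the limit argument work, and it is available because $f$ is positive on the closed interval including $0$. Without continuity up to $x=0$, $f(\theta)$ could degenerate, so I will stress that the positivity hypothesis is imposed on $[-\delta_1,0]$ rather than on $[-\delta_1,0)$. Apart from this, the argument is routine and I expect no genuine obstacle.
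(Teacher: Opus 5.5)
Your proposal is correct and follows the paper's approach: the paper states the corollary as an immediate consequence of Lemma \ref{lem:2.2} with no further argument, and the only way to obtain the factor $\ln(\delta_2/\delta_1)$ is to use $1/x$ as the sign-definite weight, as you do. Your uniform bound $f(\theta)\ge\min_{[-\delta_1,0]}f>0$ for the limit fills in what the paper leaves implicit. So do your observations that $\delta_2>0$ and continuity up to $x=0$ are needed, while the sign hypothesis on $f$ is superfluous.
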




\section{Solution formulas and properties of relativistic dust}\label{sec:3}
Throughout this paper, we employ the classical characteristic method to investigate the global existence and blowup behavior of classical solutions. By analyzing the non-autonomous dynamical system along characteristics, we derive solution formulas by the implicit functions.
\subsection{Characteristic equations and a conserved quantity}\label{subsec3.1}

When $r_0=\alpha>2m$, the characteristic curve $\gamma(t,\alpha):=(t,r(t,\alpha))$ starting from $\alpha$ writes
\begin{equation}\label{r'}
\begin{cases}
	\frac{dr(t,\alpha)}{dt}=\left(1-\frac{2m}{r}\right)\frac{v}{c},\  \ t>0, \\
	r(0,\alpha)=\alpha,\ \ \ \ \ \ \ \ \ \quad  \alpha>2m.
\end{cases}
\end{equation}
Then along the characteristic curve, we get the following ODEs satisfied by $v(t,r(t,\alpha)):=v(t,\alpha)$ and $\rho(t,r(t,\alpha)):=\rho(t,\alpha)$
\begin{equation}\label{v'}
\begin{cases}
	\frac{dv(t,\alpha)}{dt}=\frac{m}{cr^2}\left(v^2-c^2\right),\ \ \ t>0,\\
	v(0,\alpha)=v_0(\alpha),  \ \ \ \ \  \ \  \ \ \ \alpha>2m
\end{cases}
\end{equation}
and
\begin{equation}\label{curve}
\begin{cases}
	\frac{d\rho(t,\alpha)}{dt}=-\rho X, \ \ \ \ \ t>0, \\
	\rho(0,\alpha)=\rho_0(\alpha),\ \ \ \alpha>2m,
\end{cases}
\end{equation}
where
\begin{equation}\label{X}
X=\frac{f(r)}{c}v_r+\frac{2f(r)v}{cr}.
\end{equation}

If we take
\begin{equation}\label{omega1}
	\omega(t,\alpha)=\omega(t,r(t,\alpha))=\frac{v^2-c^2}{f(r)},
\end{equation}
then along the characteristic curve $\gamma$, we have
\begin{equation*}
	\frac{d \omega}{d t}=\frac{2vv_tf(r)-f'(r)r_t(v^2-c^2)}{f^2(r)}=0.
\end{equation*}
This implies
\begin{equation}\label{omega2}
	\omega(t,\alpha)\equiv \omega_0(\alpha)=\frac{v_0^2(\alpha)-c^2}{1-\frac{2m}{\alpha}}<0,
\end{equation}
since $v^2_0(\alpha)<c^2$.

From \eqref{omega1} and \eqref{omega2}, we have
\begin{equation}\label{v}
	v^2=f(r)\omega_0(\alpha)+c^2.
\end{equation}

\subsection{Solution formulas of $v$ and $\rho$}\label{subsec3.1}
In the Schwarzschild spacetime, we can not derive the explicit expressions for classical solutions from the above ODEs. Therefore, in the following section, we will employ implicit function expressions.

Integrating \eqref{curve} directly, we can arrive at
\begin{equation}\label{ans1}
    \rho(t,\alpha)=\rho_0(\alpha)e^{-\int_{0}^{t} Xdt              }.
\end{equation}
The following remark gives the main reason for the concentration of the density $\rho$.
\begin{remark}\label{rmk: blowup of rho and deri v}
    From \eqref{ans1}, we can see that the density $\rho$ itself depends on the first order derivative of $v$, so $\rho$ losses one degree of differentiability than $v$.
    And from \eqref{X} and \eqref{ans1}, we can deduce that if $X$ remains bounded on $[0, t]$, for any finite time $t$, then the density $\rho$ remains bounded. If $X\rightarrow -\infty$ as $t \to T_*^-$ for some finite $T_*$, i.e., $X$ diverges in finite time, then $\rho\rightarrow +\infty$. Furthermore, under the conditions $r > 2m > 0$ and $v$  bounded, the divergence $\rho \to +\infty$ is due to $v_r \to -\infty$.
\end{remark}
From \eqref{v}, we can obtain
\begin{equation}\label{r}
	r=\frac{2m\omega_0(\alpha)}{\omega_0(\alpha)+c^2-v^2},
\end{equation}

\begin{equation}
	v=\pm\sqrt{\left(1-\frac{2m}{r}\right)\omega_0(\alpha)+c^2},
\end{equation}
and
\begin{equation}\label{vr}
	v_r=\frac{v_\alpha}{r_\alpha}=\frac{m\omega_0(\alpha)}{r^2v}+\frac{(1-\frac{2m}{r})\omega'_0(\alpha)}{2vr_\alpha},
\end{equation}
where
\begin{equation}\label{w0'}
	\omega_0'(\alpha)=\frac{2\alpha(\alpha-2m) v_0(\alpha)v'_0(\alpha)-2m(v_0^2(\alpha)-c^2)}{(\alpha-2m)^2}.
\end{equation}
So we further have
\begin{equation}\label{X1}
	X=\frac{f(r)}{c}v_r+\frac{2f(r)v}{cr}=\frac{1-\frac{2m}{r}}{c}\left(\frac{m\omega_0(\alpha)}{r^2v}+\frac{\left(1-\frac{2m}{r}\right)\omega_0'(\alpha)}{2vr_\alpha}+\frac{2v}{r}\right).
\end{equation}

Combining \eqref{v'} and \eqref{r}, we can get an autonomous ODE
\begin{equation}\label{v'1}
	\begin{cases}
		\frac{dv(t,\alpha)}{dt}=\frac{(v^2-c^2)(D(\alpha)-v^2)^2}{4mc\omega_0^2(\alpha)},\ \ t>0,\\
		v(0,\alpha)=v_0(\alpha),\qquad\ \ \ \ \ \ \ \alpha>2m,
	\end{cases}
\end{equation}
where $D(\alpha)=\omega_0(\alpha)+c^2=\frac{v_0^2(\alpha)-\frac{2m}{\alpha}c^2}{1-\frac{2m}{\alpha}}$.

Note that
$$D(\alpha)-v^2=\omega_0(\alpha)+c^2-v^2=\omega+c^2-v^2=\frac{2m(v^2-c^2)}{r-2m},$$
and $v^2<c^2$ from \eqref{v}, we conclude that $v(t,\alpha)$ decreases for $t\in (0,\infty)$.

Solve \eqref{v'1}, we arrive at
\begin{equation}\label{psi}
	\psi(v,\alpha)=\phi(v,\alpha)-t-k(\alpha)=0,
\end{equation}
where
\begin{equation*}
	\phi(v,\alpha)=
	\begin{cases}
		2m\ln \left(\frac{c-v}{c+v}\right)+\left(\frac{3mc}{\sqrt{D(\alpha)}}-\frac{mc^3}{D^{\frac{3}{2}}(\alpha)}\right)\ln \left|\frac{v+\sqrt{D(\alpha)}}{v-\sqrt{D(\alpha)}}\right|-\frac{2mc\omega_0(\alpha)v}{(v^2-D(\alpha))D(\alpha)}, \quad \mbox{if}\ D(\alpha)>0,\\
		\frac{2m\omega_0^2(\alpha)}{c^4}\ln \left(\frac{c-v}{c+v}\right)+\frac{4m\omega_0^2(\alpha)}{c^3v}+\frac{4m\omega_0^2(\alpha)}{3cv^3},\;\;\,\quad\quad\quad\quad\quad\quad\quad \quad\quad\quad\quad \mbox{if}\  D(\alpha)=0,\\
		2m\ln \left(\frac{c-v}{c+v}\right)+\frac{2mc(\omega_0(\alpha)+2D(\alpha))}{(-D)^{\frac{3}{2}}(\alpha)}\arctan\frac{v}{\sqrt{-D(\alpha)}}-\frac{2mc\omega_0(\alpha)v}{(v^2-D(\alpha))D(\alpha)} ,\quad\   \mbox{if}\ D(\alpha)<0,
	\end{cases}
\end{equation*}
and
\begin{equation*}
	k(\alpha)=\psi(v_0(\alpha),\alpha)=
	\begin{cases}
		2m\ln \left(\frac{c-v_0(\alpha)}{c+v_0(\alpha)}\right)+\left(\frac{3mc}{\sqrt{D(\alpha)}}-\frac{mc^3}{D^{\frac{3}{2}}(\alpha)}\right)\ln \left|\frac{v_0(\alpha)+\sqrt{D(\alpha)}}{v_0(\alpha)-\sqrt{D(\alpha)}}\right|+\frac{c\alpha v_0(\alpha)}{D(\alpha)}, \quad \mbox{if}\ D(\alpha)>0,\\
		\frac{2m\omega_0^2(\alpha)}{c^4}\ln \left(\frac{c-v_0(\alpha)}{c+v_0(\alpha)}\right)+\frac{4m\omega_0^2(\alpha)}{c^3v_0(\alpha)}+\frac{4m\omega_0^2(\alpha)}{3cv_0^3(\alpha)},\quad\quad\quad\quad\quad\quad\quad\quad\quad\quad\;\, \mbox{if}\ D(\alpha)=0,\\
		2m\ln \left(\frac{c-v_0(\alpha)}{c+v_0(\alpha)}\right)+\frac{2mc(\omega_0(\alpha)+2D(\alpha))}{(-D)^{\frac{3}{2}}(\alpha)}\arctan\frac{v_0(\alpha)}{\sqrt{-D(\alpha)}}+\frac{c\alpha v_0(\alpha)}{D(\alpha)},\quad \quad\;\,\,\, \mbox{if}\ D(\alpha)<0.
	\end{cases}
\end{equation*}

Let $v_{D}(\alpha)=\sqrt{\frac{2m}{\alpha}}c$, we have if $v_0(\alpha)=\pm v_{D}(\alpha)$, then $D(\alpha)=0$.

 The following three propositions establish the range of $v$, which is closely related to the symbol of $D(\alpha)$.

\begin{Proposition}\label{Prop1}
	Suppose that $D(\alpha)>0$, then if $v_0(\alpha)>0$, we have $\sqrt{D(\alpha)}<v\leq v_0(\alpha)<c$, and if $v_0(\alpha)<0$, we get $-c<v\leq v_0(\alpha)<-\sqrt{D(\alpha)}$ .
\end{Proposition}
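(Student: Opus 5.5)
The plan is to work directly with the conserved quantity \eqref{omega2} and the relation \eqref{v}, namely $v^2=f(r)\omega_0(\alpha)+c^2=D(\alpha)-\frac{2m}{r}\omega_0(\alpha)$, together with the monotonicity of $v$ along characteristics from \eqref{v'}. Since $\omega_0(\alpha)<0$ and $r>2m$ along the characteristic (as long as the solution lives in the exterior region), we have $-\frac{2m}{r}\omega_0(\alpha)>0$. Hence
\[
D(\alpha)<v^2(t,\alpha)<D(\alpha)-\omega_0(\alpha)=c^2 .
\]
With $D(\alpha)>0$, this gives $|v|>\sqrt{D(\alpha)}$ and $|v|<c$ for all $t$ in the existence interval. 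In particular, $v$ never vanishes along the characteristic.

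Next, $v(t,\alpha)$ is continuous in $t$ and cannot cross zero, so it keeps the sign of $v_0(\alpha)$. By \eqref{v'}, $\frac{dv}{dt}=\frac{m}{cr^2}(v^2-c^2)<0$ because $v^2<c^2$, so $v$ is nonincreasing (strictly decreasing for $t>0$) and $v\le v_0(\alpha)$.

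If $v_0(\alpha)>0$, then $v>0$ throughout, and combining with $|v|>\sqrt{D(\alpha)}$ gives $\sqrt{D(\alpha)}<v\le v_0(\alpha)<c$. If $v_0(\alpha)<0$, then $v<0$ throughout, so $v<-\sqrt{D(\alpha)}$ and $v>-c$. Evaluating at $t=0$ also gives $v_0(\alpha)<-\sqrt{D(\alpha)}$, and monotonicity gives $v\le v_0(\alpha)$. Together, $-c<v\le v_0(\alpha)<-\sqrt{D(\alpha)}$.

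The only delicate point is justifying $r(t,\alpha)>2m$ for all relevant $t$, so that the strict inequalities above are legitimate. I would argue it through \eqref{r}: $r=\frac{2m\omega_0}{\omega_0+c^2-v^2}$, and $r\to 2m$ happens exactly when $v^2\to c^2$. Since $v$ is confined by the autonomous ODE \eqref{v'1}, whose right-hand side vanishes at $v=\pm c$ and at $v=\pm\sqrt{D(\alpha)}$, the solution starting strictly between these equilibria stays strictly between them for all finite $t$, by ODE uniqueness. Consequently $|v|<c$ and $r>2m$ for all finite times.

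This same uniqueness argument, applied to the equilibrium $\pm\sqrt{D(\alpha)}$, also gives a self-contained proof of the lower bound $|v|>\sqrt{D(\alpha)}$, so I would present the proof primarily via \eqref{v'1}. In the case $v_0<0$ the relevant interval is $(-c,-\sqrt{D})$, and $v$ decreases toward $-c$ without reaching it.
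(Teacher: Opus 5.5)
Your proof is correct, but it traps $v$ by a different mechanism than the paper uses.

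The paper computes $v_0^2(\alpha)-D(\alpha)=\frac{2m(c^2-v_0^2(\alpha))}{\alpha-2m}>0$ to place $v_0(\alpha)$ in $(\sqrt{D(\alpha)},c)$ or $(-c,-\sqrt{D(\alpha)})$. It then uses the explicit implicit solution $\phi(v,\alpha)=t+k(\alpha)$ of \eqref{v'1}. Since $\phi\to+\infty$ as $v\to\sqrt{D(\alpha)}^{+}$ or $v\to-c$, these values can only be approached as $t\to+\infty$.

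You instead get $|v|>\sqrt{D(\alpha)}$, and at $t=0$ the placement of $v_0(\alpha)$, directly from the conserved quantity $v^2=D(\alpha)-\frac{2m}{r}\omega_0(\alpha)$. The paper only uses this observation later, in the proof of Proposition \ref{Prop4}. You also replace the explicit antiderivative by a qualitative fact: a solution of the autonomous ODE \eqref{v'1}, whose right-hand side is polynomial in $v$, cannot reach an equilibrium in finite time.

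Your route is more elementary, and it makes explicit why the characteristic stays in $r>2m$, which the paper leaves tacit. To be fully precise, phrase it as follows. On the maximal interval on which the characteristic exists in $r>2m$, $v$ solves \eqref{v'1}, so it coincides with the global solution of \eqref{v'1}, which stays strictly between the equilibria. Then \eqref{r}, equivalently $r-2m=\frac{2m(c^2-v^2)}{v^2-D(\alpha)}$, keeps $r$ in a compact subset of $(2m,\infty)$ on bounded time intervals, so the characteristic cannot terminate in finite time.

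The paper's route additionally gives the asymptotics $v\to\sqrt{D(\alpha)}$ for $v_0(\alpha)>0$ and $v\to-c$ for $v_0(\alpha)<0$ as $t\to\infty$, used later (e.g.\ in Case 8.1), together with a quantitative relation between $t$ and $v$. In your framework these limits also follow at once, since a bounded monotone solution of an autonomous ODE must converge to an equilibrium.
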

\begin{proof}
	By simple calculation, we obtain
	\begin{equation*}
		v^2_0(\alpha)-D(\alpha)=v^2_0(\alpha)-\frac{v_0^2(\alpha)-c^2}{1-\frac{2m}{\alpha}}+c^2=\frac{2m(c^2-v_0^2(\alpha))}{\alpha-2m}>0.
	\end{equation*}

	Given $D(\alpha)>0$, and note that $\frac{dv}{dt}<0$ from \eqref{v'}, we have \begin{equation*}
        -c<v_0(\alpha)<-\sqrt{D(\alpha)},
    \end{equation*} or $$\sqrt{D(\alpha)}<v_0(\alpha)<c,$$ and
	\begin{equation}\label{phi}
		\phi=2m\ln \left(\frac{c-v}{c+v}\right)+\left(\frac{3mc}{\sqrt{D(\alpha)}}-\frac{mc^3}{D^{\frac{3}{2}}(\alpha)}\right)\ln \frac{v+\sqrt{D(\alpha)}}{v-\sqrt{D(\alpha)}}-\frac{2mc\omega_0(\alpha)v}{(v^2-D(\alpha))D(\alpha)}.
	\end{equation}

	By \eqref{psi} and \eqref{phi}, we can get that if $v\rightarrow\sqrt{D(\alpha)}^+$ or $v\rightarrow-c$, $$\phi\rightarrow+\infty.$$
    Combining this with \eqref{psi}, we could conclude that if $v_0(\alpha)<0$
	$$v(t,\alpha)\rightarrow-c,$$
    as $t\rightarrow +\infty$.
    If $v_0(\alpha)>0$
	$$v(t,\alpha)\rightarrow\sqrt{D(\alpha)},$$
   as $t\rightarrow +\infty$.

	So we finally obtain if $v_0(\alpha)<0$ and $t< \infty$
	$$-c<v(t,\alpha)<v_0(\alpha)<-\sqrt{D(\alpha)},$$
    and if $v_0(\alpha)>0$
	$$\sqrt{D(\alpha)}<v(t,\alpha)<v_0(\alpha)<c,$$
    for $t< \infty$.
\end{proof}

By similar discussions, we have the following two propositions
\begin{Proposition}\label{Prop2}
	Suppose that $D(\alpha)=0$, we have if $v_0(\alpha)>0$, we have
    $$0<v\leq v_0(\alpha)=v_D(\alpha),$$ and if $v_0(\alpha)<0$, we have $$-c<v\leq v_0(\alpha)=-v_D(\alpha).$$
\end{Proposition}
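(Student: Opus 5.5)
The plan is to follow the proof of Proposition \ref{Prop1}, now using the $D(\alpha)=0$ branch of $\phi$ in \eqref{psi} together with the ODE \eqref{v'1}. First, $D(\alpha)=0$ means $\omega_0(\alpha)=-c^2$, which is equivalent to $v_0^2(\alpha)=\frac{2m}{\alpha}c^2$, i.e.\ $v_0(\alpha)=\pm v_D(\alpha)$. This gives the equalities $v_0(\alpha)=v_D(\alpha)$ and $v_0(\alpha)=-v_D(\alpha)$ in the two cases. Next, substituting $\omega_0=-c^2$ into \eqref{v} gives $v^2=c^2-c^2 f(r)=\frac{2mc^2}{r}$ along the characteristic. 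Since $r>2m$ as long as the characteristic stays in the exterior, this yields $0<v^2<c^2$.

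With $D=0$, \eqref{v'1} reduces to
\begin{equation*}
\frac{dv}{dt}=\frac{(v^2-c^2)\,v^4}{4mc^5}\le 0 .
\end{equation*}
So $v$ is nonincreasing, which gives the upper bound $v\le v_0(\alpha)$ in both cases. Moreover, $v=0$ and $v=\pm c$ are equilibria of this autonomous scalar ODE, whose right-hand side is smooth. By uniqueness, a trajectory starting at $v_0(\alpha)\in(0,c)$ can never reach $0$, and one starting at $v_0(\alpha)\in(-c,0)$ can never reach $-c$. The same conclusions follow from $v^2=2mc^2/r>0$ and continuity.

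For a quantitative version, I will read the implicit relation $\phi(v,\alpha)=t+k(\alpha)$ with
\begin{equation*}
\phi=\frac{2m\omega_0^2}{c^4}\ln\frac{c-v}{c+v}+\frac{4m\omega_0^2}{c^3v}+\frac{4m\omega_0^2}{3cv^3}.
\end{equation*}
When $v_0>0$, $\phi\to+\infty$ as $v\to0^+$ because of the $v^{-3}$ term. Hence $v(t,\alpha)\to 0^+$ only as $t\to+\infty$, and $0<v\le v_0(\alpha)$ for every finite $t$. When $v_0<0$, $\phi\to+\infty$ as $v\to -c^+$ because of the logarithm. Hence $v\to -c$ only as $t\to\infty$, and $-c<v\le v_0(\alpha)$ for every finite $t$.

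The main subtlety is the case $v_0<0$. Here one must make sure that $\phi$ is monotone on $(-c,v_0]$, so that the implicit relation really defines $v$ as a function of $t$ on all of $[0,\infty)$. This follows since $\partial_v\phi=1/\dot v$ has a fixed sign away from the equilibria. One should also check that the pole of $\phi$ at $v=0$ plays no role there, since $v$ never crosses $0$ in that case.
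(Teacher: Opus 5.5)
Your proposal is correct and follows essentially the route the paper intends, since the paper only says ``by similar discussions'' to Proposition \ref{Prop1}. In both, $D(\alpha)=0$ forces $v_0(\alpha)=\pm v_D(\alpha)$, $v$ is nonincreasing by \eqref{v'1}, and the $D=0$ branch of $\phi$ tends to $+\infty$ as $v\to0^+$ (resp.\ $v\to-c^+$), so these values are reached only as $t\to\infty$. Your extra observation is valid and slightly cleaner: $v=0$ and $v=\pm c$ are equilibria of the polynomial ODE $\dot v=(v^2-c^2)v^4/(4mc^5)$, so uniqueness alone gives the bounds and the explicit $\phi$ computation is not needed for the statement itself.
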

\begin{Proposition}\label{pro:3.3}
	Suppose that $D(\alpha)<0$, we have $$-c<v<v_0(\alpha).$$
\end{Proposition}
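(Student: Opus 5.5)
We prove Proposition \ref{pro:3.3} the same way as Proposition \ref{Prop1}: we combine the monotonicity of $v$ along characteristics with the implicit relation \eqref{psi} in the branch $D(\alpha)<0$.

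\emph{Step 1: the flow is strictly decreasing.} From \eqref{v'1} with $D(\alpha)<0$ we have $D(\alpha)-v^2\le D(\alpha)<0$. Hence $(D(\alpha)-v^2)^2>0$ for every real $v$, and the right-hand side of \eqref{v'1} vanishes only at $v=\pm c$. Since $v_0^2(\alpha)<c^2$, uniqueness for this autonomous ODE keeps $v$ strictly between the equilibria $\pm c$. Therefore $\frac{dv}{dt}<0$ for all $t>0$, which gives $v(t,\alpha)<v_0(\alpha)$ for $t>0$ and $v>-c$. Note also that \eqref{v} together with $D<0$ forces $v^2\ge -\omega_0\cdot\frac{2m}{r}$, but we do not need this. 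The point is that no barrier $\pm\sqrt{D}$ exists here, so $v$ may cross zero.

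\emph{Step 2: the trajectory reaches $-c$ only as $t\to\infty$.} For $D<0$ we use the third branch of $\phi$. The $\arctan$ term and the rational term $-\frac{2mc\omega_0 v}{(v^2-D)D}$ are bounded and smooth on $[-c,c]$, because $v^2-D\ge -D>0$. The term $2m\ln\frac{c-v}{c+v}$ tends to $+\infty$ as $v\to -c^+$. Since $\phi(v(t),\alpha)=t+k(\alpha)$ and $\phi$ is finite for $v\in(-c,v_0]$, the solution exists for all $t$. It cannot reach $-c$ in finite time, and $v(t,\alpha)\to -c$ as $t\to\infty$. A decreasing bounded solution converges to an equilibrium, and the only one below $v_0$ is $-c$.

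\emph{Step 3: conclusion.} Combining the two steps gives $-c<v(t,\alpha)<v_0(\alpha)$ for $t>0$, with equality only at $t=0$. This is exactly the statement.

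The only delicate point is verifying that $\phi$ has no singularity at $v=0$ in this case. The expression $\frac{1}{(-D)^{3/2}}\arctan\frac{v}{\sqrt{-D}}$ and the rational term are regular because $v^2-D>0$. So, unlike the case $D=0$, crossing $v=0$ causes no blowup of $t(v)$. I expect this check, together with the monotonicity of $\phi$ (equivalently $\partial_v\phi=\frac{4mc\omega_0^2}{(v^2-c^2)(D-v^2)^2}<0$, which follows from \eqref{v'1}), to be the main step.
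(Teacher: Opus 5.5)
Your argument is correct and follows the paper's route: the paper proves this ``by similar discussions'' to Proposition \ref{Prop1}, namely strict decrease of $v$ along the characteristic together with $\phi\to+\infty$ as $v\to-c^+$ in the $D(\alpha)<0$ branch of \eqref{psi}, and that is exactly your Steps 1--2. The only slip is in your unused aside: \eqref{v} gives $v^2=D(\alpha)-\frac{2m}{r}\omega_0(\alpha)$, and with $D(\alpha)<0$ this yields $v^2<-\frac{2m}{r}\omega_0(\alpha)$, not $\ge$.
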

\begin{remark}
    The trajectory of $v$ according to the symbol of $D(\alpha)$ can be represented in Figure 3

    \begin{figure}[H]\label{fig:3}
    	\centering
    	\subfigure[The trajectory of $v$ when $D(\alpha)>0$]{
    		\includegraphics[width=0.45\linewidth]{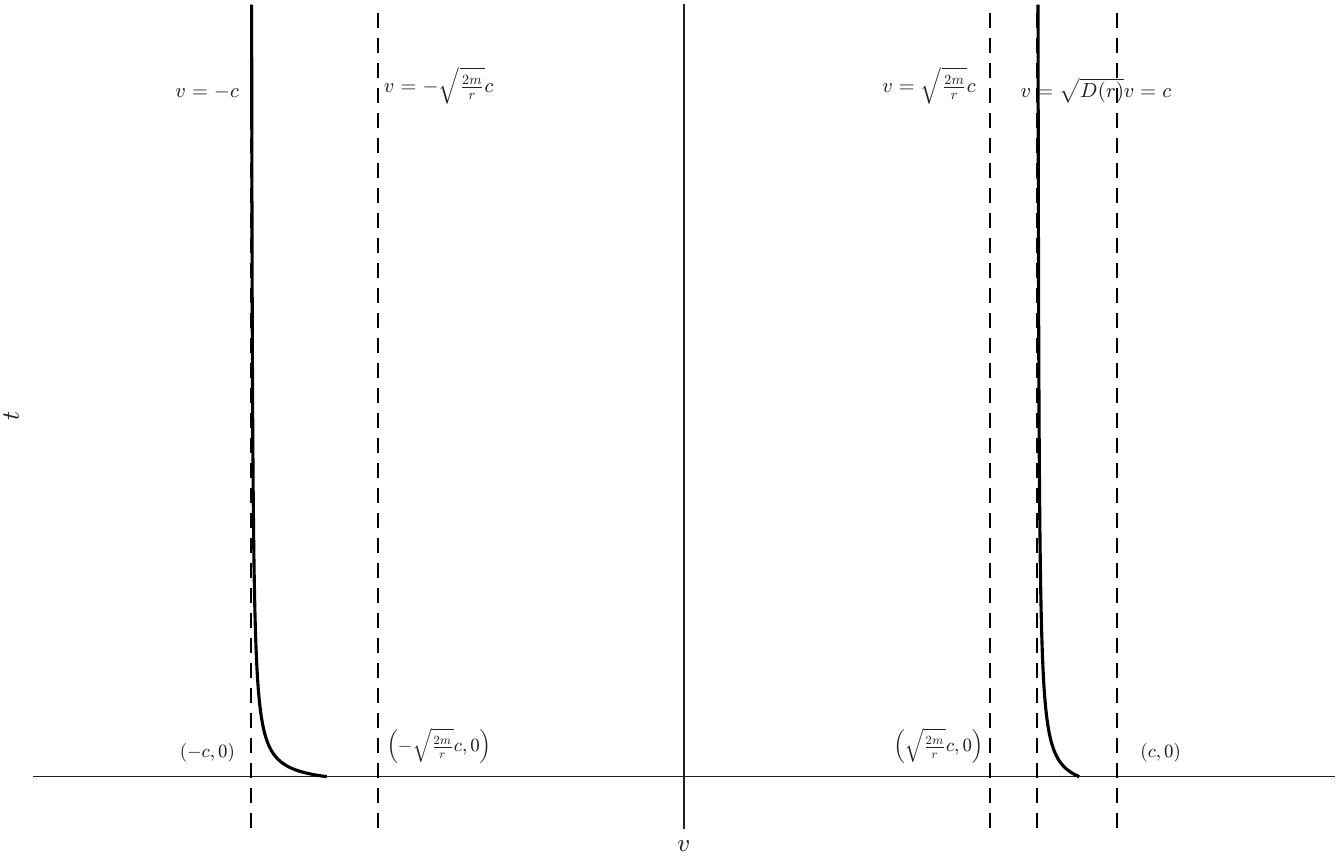}
    		\label{fig:sub3}
    	}
    	\subfigure[The trajectory of $v$ when $D(\alpha)=0$]{
    		\includegraphics[width=0.45\linewidth]{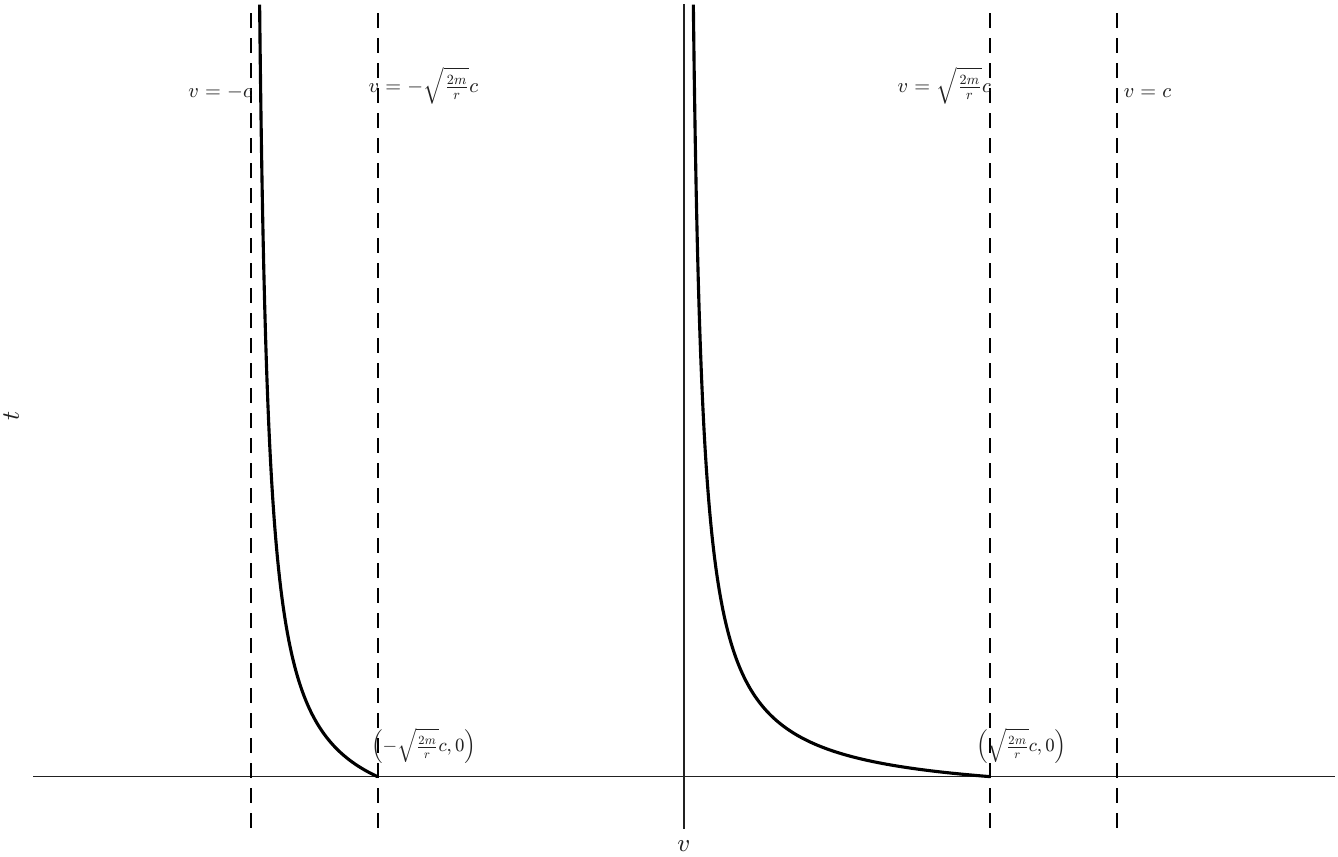}
    		\label{fig:sub4}
    	}
    	\caption{When $D(\alpha)\geq0$, the trajectory of $v$}
    	\label{fig:subfigures1}
    \end{figure}

\begin{figure}[H]
    \centering
    \subfigure[The first type of trajectory of $v$ when $D(\alpha)<0$]{
    	\includegraphics[width=0.45\linewidth]{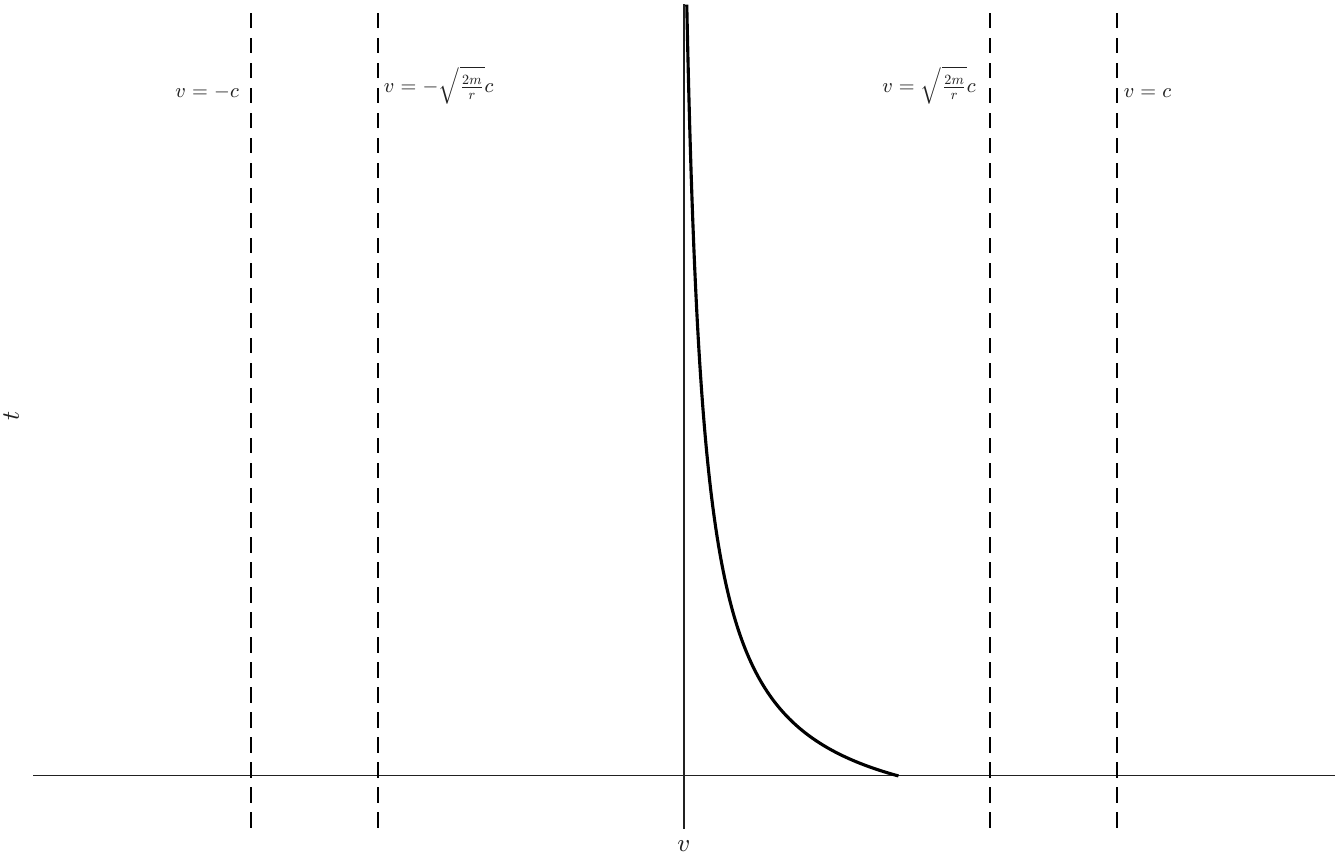}
    	\label{fig:sub5}
    }
    \subfigure[The second type of trajectory of $v$ when $D(\alpha)<0$]{
    	\includegraphics[width=0.45\linewidth]{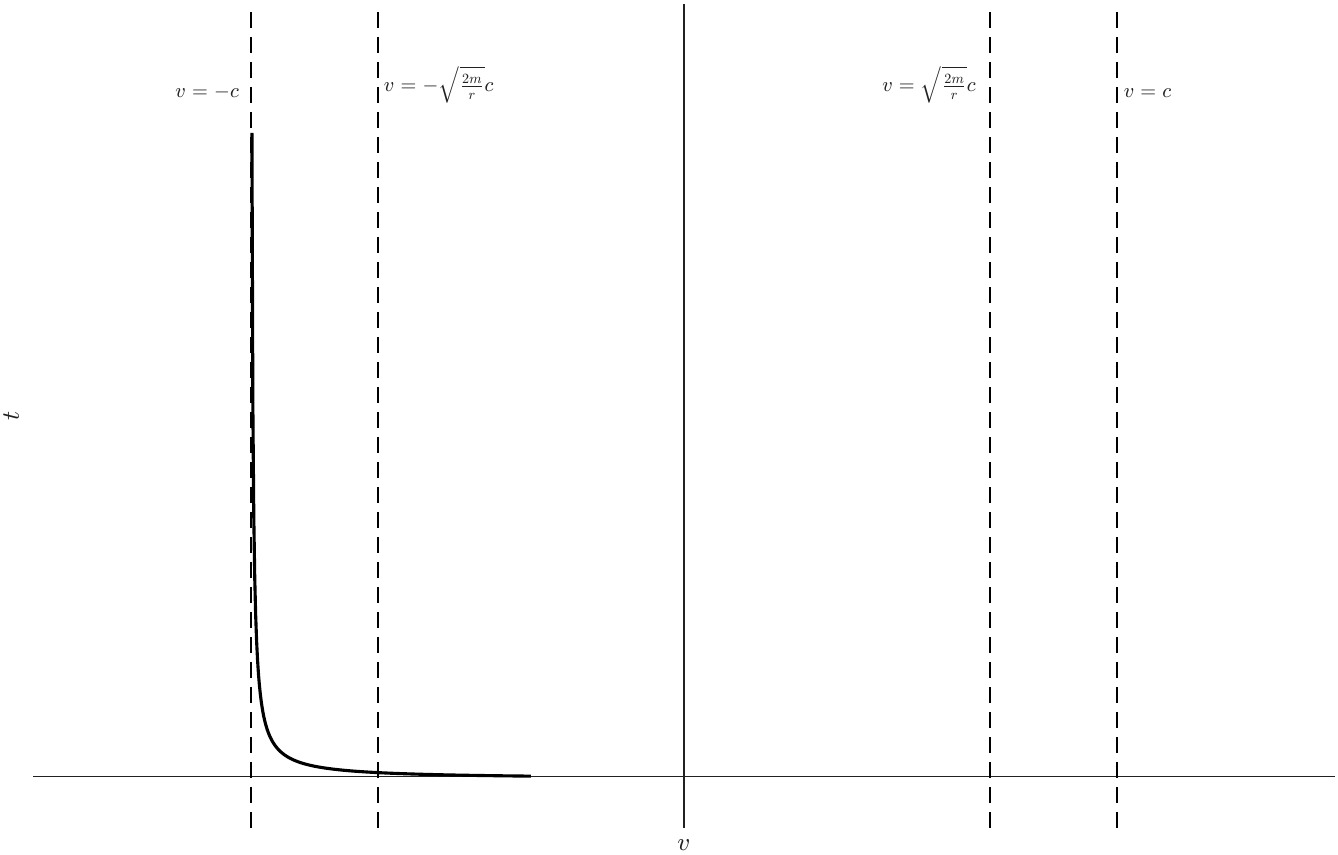}
    	\label{fig:sub6}
    }

    \caption{When $D(\alpha) < 0$, the two possible trajectories of $v$	}
    \label{fig:subfigures2}
\end{figure}
When $D(\alpha)<0$,there are two possible trajectories for
$v$: If $v_0(\alpha)>0$, the $v_{r}$ and $\rho$ may blow up as $v\to 0$ as shown in Figure \ref{fig:sub5}. In other cases, $v\to -c$ as $t\to \infty$ as shown in Figure \ref{fig:sub6}.
\end{remark}

\subsection{Mechanism and blowup behavior of $v_r$}\label{subsec2}
In this subsection, we first show that $v_{\alpha}(t,\alpha)$ is bounded along the characteristic $r(t,\alpha)$ starting from $\alpha$.
\begin{Lemma}\label{lem:3.1}
    Along the characteristic curve $r(t,\alpha)$ starting from any fixed $\alpha$, we have $v_\alpha$ is bounded for all $t$.
\end{Lemma}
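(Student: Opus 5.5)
Differentiating the implicit relation \eqref{psi}, $\phi(v,\alpha)-t-k(\alpha)=0$, with respect to $\alpha$ along a fixed characteristic gives
\begin{equation*}
	\partial_v\phi(v,\alpha)\,v_\alpha+\partial_\alpha\phi(v,\alpha)-k'(\alpha)=0 .
\end{equation*}
Since $\partial_v\phi = 1/\dot v$, we can read off from \eqref{v'1} that
\begin{equation*}
	v_\alpha=\frac{(v^2-c^2)(D(\alpha)-v^2)^2}{4mc\,\omega_0^2(\alpha)}\bigl(k'(\alpha)-\partial_\alpha\phi(v,\alpha)\bigr).
\end{equation*}
The plan is to show that the right-hand side stays bounded for all $t$. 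Equivalently, $v_\alpha$ satisfies the linearized equation $\frac{d}{dt}v_\alpha=\partial_v F\, v_\alpha+\partial_\alpha F$, where $F(v,\alpha)$ is the right-hand side of \eqref{v'1}. I would use the implicit formula because it handles large $t$ cleanly.

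\textbf{Step 1: use the conserved quantity to relate $v_\alpha$ to $r_\alpha$.} A simpler route is to differentiate \eqref{v}, $v^2=f(r)\omega_0(\alpha)+c^2$, in $\alpha$:
\begin{equation*}
	2vv_\alpha=\frac{2m}{r^2}\omega_0 r_\alpha+f(r)\omega_0'(\alpha).
\end{equation*}
This is exactly \eqref{vr}. It shows that $v_\alpha$ is controlled by $r_\alpha$ and $1/v$, so this route would mean bounding $r_\alpha$ first, and that is the difficult part treated later in the paper. I will therefore work directly with the implicit formula instead.

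\textbf{Step 2: split by the sign of $D(\alpha)$.} On a compact time interval, $v_\alpha$ is bounded by ODE smooth dependence, because $F$ is $C^1$ in $(v,\alpha)$ (given $v_0\in C^2$, so $\omega_0\in C^1$). The issue is therefore $t\to\infty$, and Propositions \ref{Prop1}--\ref{pro:3.3} identify the limits.
\begin{itemize}
\item If $D>0$ and $v_0>0$, then $v\to\sqrt{D}$. The prefactor $(D-v^2)^2$ tends to zero. The term $\partial_\alpha\phi$ contains singular pieces of the form $\partial_\alpha\sqrt D/(v-\sqrt D)$ from the log term and $D'/(v^2-D)^2$ from the rational term. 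These blow up at most like $(v-\sqrt D)^{-2}$, so their product with the prefactor stays bounded.
\item If $v\to -c$ (the cases $v_0<0$ and $D\le 0$), the prefactor $(v^2-c^2)$ vanishes linearly. The only singular piece of $\partial_\alpha\phi$ comes from the term $\frac{2m\omega_0^2}{c^4}\ln\frac{c-v}{c+v}$ when $D=0$; in the other cases the coefficient $2m$ is independent of $\alpha$ and produces no singular piece. This piece is logarithmic, and $(c+v)\ln(c+v)\to0$, so the product stays bounded.
\end{itemize}
The arctan and the remaining rational terms are smooth near $v=-c$.

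\textbf{Step 3: handle the passage $v\to0$ when $D<0$ (main obstacle).} Here $\phi$ is smooth in $v$ through $0$, because $v^2-D>0$. So $\partial_\alpha\phi$ is continuous and $v_\alpha$ passes through without difficulty. The delicate case is $D=0$, where $\phi$ contains the terms $1/v$ and $1/v^3$. By Proposition \ref{Prop2}, however, $v$ stays strictly positive and $v\to0$ only as $t\to\infty$. Near that limit the prefactor $(D-v^2)^2=v^4$ absorbs the $v^{-3}$ singularities, giving boundedness.

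I also need $\partial_\alpha$ of $D$ and $\omega_0$ to exist across the case boundaries, since the three formulas for $\phi$ are separate and $D$ varies with $\alpha$. To avoid differentiating across regimes, I would fall back on the ODE form: $v_\alpha=\exp\!\bigl(\int_0^t\partial_vF\bigr)\bigl(v_0'+\int_0^t e^{-\int_0^s\partial_vF}\partial_\alpha F\,ds\bigr)$. Along $v\to-c$ or $v\to\sqrt D$ one has $\partial_vF<0$ eventually, with exponential decay, while $\partial_\alpha F$ decays correspondingly. This Duhamel bound is the check I would carry out first, with the implicit-formula computation as confirmation.
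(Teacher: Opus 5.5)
Your compact-interval remark at the start of Step 2 already proves everything the paper proves, and it follows essentially the paper's route. The variational equation $\frac{d}{dt}v_\alpha=\partial_vF\,v_\alpha+\partial_\alpha F$ of \eqref{v'1} is exactly \eqref{vat1}: using $D-v^2=2m\omega_0/r$ and $v^2-c^2=f(r)\omega_0$, one gets $\partial_vF=\frac{2v(3m-r)}{cr^2}$ and $\partial_\alpha F=\frac{f(r)^2}{cr}\omega_0'(\alpha)$. Its coefficients are continuous for all finite $t$, so $v_\alpha$ is finite for every $t<\infty$. That is precisely the paper's conclusion; the paper then integrates \eqref{vat1} in closed form via $t\mapsto v$ to obtain \eqref{Im}. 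Incidentally, the relation you discard in Step 1 is how the paper reaches \eqref{vat1}: \eqref{eq2} is used to eliminate $r_\alpha$ from \eqref{eq3}, not to bound it.

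The genuine problem is the part you call the real issue. A bound uniform as $t\to\infty$ is false, and Step 3 asserts it wrongly. Take $D(\alpha)=0$, $v_0(\alpha)=v_D(\alpha)>0$ and $\omega_0'(\alpha)>0$; such a point is even allowed in the global-existence set $F_1$ of Theorem \ref{thm:1.2}. Then $v\to0^+$ as $t\to\infty$, and \eqref{Im} (see Remark \ref{vad}) gives
\begin{equation*}
v_\alpha=\frac{v_0'(\alpha)(c^2-v^2)v^4}{(c^2-v_0^2(\alpha))v_0^4(\alpha)}+\frac{2\omega_0'(\alpha)(c^2-v^2)}{5c^2}\Bigl(\frac1v-\frac{v^4}{v_0^5(\alpha)}\Bigr)\sim\frac{2\omega_0'(\alpha)}{5v}\longrightarrow+\infty .
\end{equation*}

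Your error is differentiating the $D=0$ branch of $\phi$ in $\alpha$, but that formula holds only on the level set $\{D=0\}$. The correct $\alpha$-derivative of $\int_{v_0(\alpha)}^{v}ds/F(s,\alpha)$ contains $\partial_D(D-s^2)^{-2}=-2(D-s^2)^{-3}$. At $D=0$ this yields $\int^v s^{-6}\,ds\sim v^{-5}$, and the prefactor $(D-v^2)^2=v^4$ leaves a $v^{-1}$ divergence. The same illegitimate step appears in your $v\to-c$ bullet, where it happens to be harmless.

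Your Duhamel fallback is also wrong near $v=\sqrt D$. There $F$ has a double zero, so $\partial_vF(\sqrt D)=0$ and there is no exponential damping. Instead $\partial_vF\approx-2/t$ and $\partial_\alpha F\sim t^{-1}$, and boundedness (with limit $\omega_0'/(2\sqrt D)$) comes from this algebraic balance. Exponential damping, at rate $1/(2m)$, occurs only as $v\to-c$.

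So the lemma must be read, as in the paper, as finiteness of $v_\alpha$ for each finite $t$. Keep your finite-time argument, and either drop the uniform-in-time claims or restrict them to $D(\alpha)\neq0$.
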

\begin{proof}
    Differentiating \eqref{v'} and \eqref{v} with respect to $\alpha$ respectively, we obtain
    \begin{equation}\label{eq3}
        \frac{d v_\alpha}{d t}=\frac{2mvv_\alpha}{cr^2}-\frac{2mr_\alpha(v^2-c^2)}{cr^3},
    \end{equation}
    and
    \begin{equation}\label{eq2}
        2vv_\alpha=\left(1-\frac{2m}{r}\right)\omega_0'(\alpha)+\frac{2m\omega_0(\alpha)}{r^2}r_\alpha.
    \end{equation}
    Combining \eqref{eq3} and \eqref{eq2}, we have
    \begin{equation}\label{vat1}
    \begin{cases}
        \frac{d v_\alpha}{d t}=\frac{2v(3m-r)}{cr^2}v_\alpha+\frac{\left(1-\frac{2m}{r}\right)^2}{cr}\omega_0'(\alpha),\\
     v_\alpha(0,\alpha)=v_0'(\alpha).
    \end{cases}
    \end{equation}
Solving \eqref{vat1}, we obtain
    \begin{equation}\label{v_alpha}
        v_\alpha(t,\alpha)=e^{\int_{0}^{t}\frac{2v(3m-r)}{cr^2}ds}\left(\int_{0}^{t}\frac{\left(1-\frac{2m}{r}\right)^2 \omega_0'(\alpha)  }{cr}e^{-\int_{0}^{s}\frac{2v(3m-r)}{cr^2}d\tau} ds+v'_0(\alpha)   \right).
    \end{equation}
 By \eqref{v'1}, we may change the variable of integration from $t$ to $v$
    \begin{equation*}
        \begin{aligned}
            \int_{0}^{t}\frac{2v(3m-r)}{cr^2}ds&=\int_{v_0(\alpha)}^{v}\frac{2s(\omega_0(\alpha)+3c^2-3s^2)}{(s^2-c^2)(\omega_0(\alpha)+c^2-s^2)}ds=\int_{v^2_0(\alpha)}^{v^2}\frac{1}{u-c^2}-\frac{2}{\omega_0(\alpha)+c^2-u}du\\
            &=\ln \frac{(c^2-v^2)(\omega_0(\alpha)+c^2-v^2)^2}{(c^2-v_0^2(\alpha))(\omega_0(\alpha)+c^2-v_0^2(\alpha))^2}.
        \end{aligned}
    \end{equation*}
    Then in terms of $v$ instead of $t$ for fixed $\alpha$, we can obtain
    \begin{equation}\label{Im}
        \begin{aligned}
            &v_\alpha(t,\alpha)=e^{\int_{0}^{t}\frac{2v(3m-r)}{cr^2}ds}\left(\int_{0}^{t}\frac{\left(1-\frac{2m}{r}\right)^2 \omega_0'(\alpha)  }{cr}e^{-\int_{0}^{s}\frac{2v(3m-r)}{cr^2}d\tau} ds+v'_0(\alpha)   \right)\\
            =&\frac{(c^2-v^2)(D(\alpha)-v^2)^2}{(c^2-v_0^2(\alpha))(D(\alpha)-v_0^2(\alpha))^2}\left(2\omega_0'(\alpha)\left(1-\frac{2m}{\alpha}\right)(D(\alpha)-v_0^2(\alpha))^2\int_{v_0(\alpha)}^{v}\frac{1 }{(D(\alpha)-s^2)^3}ds+v'_0(\alpha)   \right)\\
            :=&v_\alpha(v,\alpha).
        \end{aligned}
    \end{equation}
Notice that
    \begin{equation*}
        \omega_0(\alpha)+c^2-v^2=\omega+c^2-v^2=\frac{2m(v^2-c^2)}{r-2m}<0,
    \end{equation*}
    this implies that
    $$\frac{1 }{(D(\alpha)-v^2)^3}<\infty$$ for all $|v|<c$, and $$|v_\alpha|<\infty$$ for all $t<\infty$, since $v_0(\alpha)\in C_0^1([\alpha_{min},\alpha_{max}])$.
 \end{proof}
\begin{remark}\label{vad}
Indeed,	we can obtain the expression for $v_\alpha$ \eqref{Im} directly in terms of $v$ according to different symbol of $D(\alpha)$, which is useful to analyze the behavior of $v_{rr}$ and $\rho_r$ in the following section.  When $D(\alpha)>0$, we have
	\begin{align*}
		v_\alpha=&-\frac{\omega_0'(\alpha)(c^2-v^2)v}{2D(\alpha)\omega_0(\alpha)}+\frac{\omega_0'(\alpha)v_0(\alpha)(c^2-v^2)(D(\alpha)-v^2)^2}{2D(\alpha)\omega_0(\alpha)(D(\alpha)-v_0^2(\alpha))^2}\\
		&+\frac{3\omega_0'(\alpha)(c^2-v^2)(v^2-D(\alpha))v}{4D^2(\alpha)\omega_0(\alpha)}-\frac{3\omega_0'(\alpha)v_0(\alpha)(c^2-v^2)(v^2-D(\alpha))^2}{4D^2(\alpha)\omega_0(\alpha)(v_0^2(\alpha)-D(\alpha))}\\
		&-\frac{3\omega_0'(\alpha)(c^2-v^2)(D(\alpha)-v^2)^2}{8D^{\frac{5}{2}}(\alpha)\omega_0(\alpha)}\ln\frac{(v-\sqrt{D(\alpha)})(v_0(\alpha)+\sqrt{D(\alpha)})}{(v+\sqrt{D(\alpha)})(v_0(\alpha)-\sqrt{D(\alpha)})}\\
		&+\frac{v'_0(\alpha)(c^2-v^2)(D(\alpha)-v^2)^2}{(c^2-v^2_0(\alpha))(D(\alpha)-v^2_0(\alpha))^2}.
	\end{align*}
	When $D(\alpha)=0$, we have
	\begin{align*}
		v_\alpha=\frac{v'_0(\alpha)(c^2-v^2)v^4}{(c^2-v_0^2(\alpha))v_0^4(\alpha)}+\frac{2\omega_0'(\alpha)(c^2-v^2)}{5c^2}\left(\frac{1}{v}-\frac{v^4}{v_0^5(\alpha)}\right).
	\end{align*}
	When $D(\alpha)<0$, we have
	\begin{align*}
		v_\alpha=&-\frac{\omega_0'(\alpha)(c^2-v^2)v}{2D(\alpha)\omega_0(\alpha)}+\frac{\omega_0'(\alpha)v_0(\alpha)(c^2-v^2)(D(\alpha)-v^2)^2}{2D(\alpha)\omega_0(\alpha)(D(\alpha)-v_0^2(\alpha))^2}\\
		&+\frac{3\omega_0'(\alpha)(c^2-v^2)(v^2-D(\alpha))v}{4D^2(\alpha)\omega_0(\alpha)}-\frac{3\omega_0'(\alpha)v_0(\alpha)(c^2-v^2)(v^2-D(\alpha))^2}{4D^2(\alpha)\omega_0(\alpha)(v_0^2(\alpha)-D(\alpha))}\\
		&-\frac{3\omega_0'(\alpha)(c^2-v^2)(D(\alpha)-v^2)^2}{8(-D(\alpha))^{\frac{5}{2}}(\alpha)\omega_0(\alpha)}\left(\arctan\frac{v}{\sqrt{-D(\alpha)}}-\arctan\frac{v_0(\alpha)}{\sqrt{-D(\alpha)}}\right)\\
		&+\frac{v'_0(\alpha)(c^2-v^2)(D(\alpha)-v^2)^2}{(c^2-v^2_0(\alpha))(D(\alpha)-v^2_0(\alpha))^2}.
	\end{align*}
\end{remark}

\subsubsection{Asymptotic behavior of $v$ near $v(t^*,\alpha)=0$}\label{subsection3.2.1}

The following Proposition gives the sufficient and necessary conditions on $v\rightarrow0$ in finite time.
\begin{Proposition}\label{Prop4}
	For fixed $\alpha>2m$, if $D(\alpha)<0$ and $v_0(\alpha)>0$, then $v(t,\alpha)$ vanishes at some finite time $t^*>0$. Otherwise, $v\neq 0$ for all finite time if $v_0(\alpha)\neq0$.
\end{Proposition}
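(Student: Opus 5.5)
The plan is to work entirely with the autonomous ODE \eqref{v'1} along a fixed characteristic. Its right-hand side $F(v)=\frac{(v^2-c^2)(D(\alpha)-v^2)^2}{4mc\omega_0^2(\alpha)}$ is smooth in $v$ and strictly negative wherever $|v|<c$ and $v^2\neq D(\alpha)$. The only candidate rest points in $(-c,c)$ are therefore $v=\pm\sqrt{D(\alpha)}$, and these exist only when $D(\alpha)\ge 0$. I will write $t(v)=\int_{v}^{v_0(\alpha)}\frac{ds}{-F(s)}$ for the time needed to descend from $v_0$ to $v$, which is exactly what \eqref{psi} expresses, and then split into cases according to the sign of $D(\alpha)$ and of $v_0(\alpha)$.

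\emph{Case $D(\alpha)<0$, $v_0(\alpha)>0$.} The quantity $D(\alpha)-s^2\le D(\alpha)<0$ is bounded away from zero, and $c^2-s^2\ge c^2-v_0^2(\alpha)>0$ on $[0,v_0(\alpha)]$. Hence $-F(s)\ge \frac{(c^2-v_0^2)D^2}{4mc\omega_0^2}>0$ on that interval, and
\begin{equation*}
t^*:=\int_0^{v_0(\alpha)}\frac{4mc\,\omega_0^2(\alpha)\,ds}{(c^2-s^2)(D(\alpha)-s^2)^2}<\infty .
\end{equation*}
Since $v$ is strictly decreasing with derivative bounded away from zero, it reaches $0$ exactly at $t=t^*$. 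Equivalently, $t^*=\phi(0,\alpha)-k(\alpha)$ from the $D<0$ branch of \eqref{psi}, and every term there is finite at $v=0$. It remains to confirm that the solution of \eqref{v'1} really corresponds to an admissible characteristic up to $t^*$. By \eqref{r}, $r=2m\omega_0/(D-v^2)$ stays finite and larger than $2m$ while $|v|<c$. So the characteristic lives in the exterior region up to $t^*$, and the solution of the ODE exists there.

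\emph{Remaining cases with $v_0(\alpha)\ne 0$.} If $D(\alpha)>0$, Proposition \ref{Prop1} gives $v>\sqrt{D(\alpha)}>0$ when $v_0>0$ and $v<v_0<0$ when $v_0<0$, for all finite $t$. If $D(\alpha)=0$, Proposition \ref{Prop2} gives $v>0$ or $v\le v_0<0$. If $D(\alpha)<0$ and $v_0<0$, monotonicity gives $v<v_0<0$. In each case $v\neq0$.

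The main obstacle is the case $D=0$, $v_0>0$, and likewise checking that in the case $D>0$ the time to reach $\sqrt{D}$ is infinite. Both amount to showing that $v=0$, respectively $v=\sqrt D$, is approached only as $t\to\infty$. To do this I will note that in these cases $F$ vanishes to second order at the rest point: $(D-s^2)^2$ has a double zero at $\sqrt D$, and $s^4$ has a quartic zero at $0$. Consequently $\int ds/F$ diverges there, since $\int ds/(s-\sqrt D)^2$ and $\int ds/s^4$ are both infinite. This matches the terms $-\frac{2mc\omega_0 v}{(v^2-D)D}$ and $\frac{4m\omega_0^2}{c^3v}+\frac{4m\omega_0^2}{3cv^3}$ in $\phi$, which blow up to $+\infty$. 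By uniqueness for the smooth ODE \eqref{v'1}, $v$ cannot cross the equilibrium either, which closes the argument.
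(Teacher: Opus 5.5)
Your proof is correct. For $D(\alpha)<0$ it matches the paper in substance, but for $D(\alpha)\ge 0$ you take a longer, dynamical route.

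\textbf{The case $D(\alpha)\ge 0$.} The paper settles this in one line from the conserved quantity. It writes $v^2=D(\alpha)-\frac{2m}{r}\omega_0(\alpha)$, where $\omega_0(\alpha)<0$ and $r$ is finite at finite times. Hence $v^2>D(\alpha)\ge 0$ (indeed $v^2=2mc^2/r$ when $D(\alpha)=0$), so $v$ cannot vanish.

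What you flag as the main obstacle, namely that the rest point $\sqrt{D(\alpha)}$ or $0$ is never reached, therefore never arises. Your handling of it is still valid, and either of your two arguments alone suffices. Uniqueness for a locally Lipschitz autonomous field already forbids reaching an equilibrium in finite time, so the second- or fourth-order vanishing of $F$ is more than you need.

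Your route does also need $v_0^2(\alpha)>D(\alpha)$, so that a positive $v_0$ starts above $\sqrt{D(\alpha)}$ rather than between the two rest points. You import this from Proposition \ref{Prop1}; it is just the $t=0$ case of the paper's identity.

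\textbf{The case $D(\alpha)<0$.} The paper uses the explicit primitive $\phi$, which is strictly decreasing with $\phi(0)=0$. So $k(\alpha)$ has the sign opposite to $v_0(\alpha)$ and $t^*=-k(\alpha)$, which handles $v_0>0$ and $v_0<0$ at once. You instead bound $-F$ from below on $[0,v_0(\alpha)]$. That needs no antiderivative and gives the explicit estimate $t^*\le 4mc\,\omega_0^2(\alpha)v_0(\alpha)/\bigl((c^2-v_0^2(\alpha))D^2(\alpha)\bigr)$. Your check that $r$ stays in $(2m,\infty)$ up to $t^*$ also makes explicit a point the paper leaves tacit.
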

\begin{proof}
	By (\ref{v}), we have
	\begin{equation*}
		v^2(t,\alpha)=c^2+\left(1-\frac{2m}{r}\right)\omega_0(\alpha)=D(\alpha)-\frac{2m}{r}\omega_0(\alpha),
	\end{equation*}
	then we have
	\begin{equation*}
		v^2>D(\alpha)>0, \quad \mbox{ if } D(\alpha)>0,
	\end{equation*}
    and
	\begin{equation*}
		v^2=-\frac{2m}{r}\omega_0(\alpha)=\frac{2mc^2}{r}>0, \quad\mbox{ if } D(\alpha)=0.
	\end{equation*}

	And when $D(\alpha)<0$, we have
	\begin{equation*}
		\phi(v)=2m\ln \left(\frac{c-v}{c+v}\right)+\frac{2mc(\omega_0(\alpha)+2D(\alpha))}{(-D(\alpha))^{3/2}}\arctan\frac{v}{\sqrt{-D(\alpha)}}-\frac{2mc\omega_0(\alpha)v}{(v^2-D(\alpha))D(\alpha)}.
	\end{equation*}
It is easy to check that $\phi(v)$ admits the following properties
	$$\frac{d\phi}{dv}=\frac{4mc\omega_0^2(\alpha)}{(v^2-c^2)(D(\alpha)-v^2)^2}<0,\quad \phi(0)=0$$
and $$\phi(v(t,\alpha))=t+k(\alpha)\rightarrow+\infty \mbox{ when } t\rightarrow\infty.
	$$
	Thus, $k(\alpha)=\phi(v_0(\alpha),\alpha)<0$ if and only if $v_0(\alpha)>0$.

    Then if $k(\alpha)< 0$ initially, there exists  $0<t^*=-\phi(v_0(\alpha),\alpha)<\infty$, such that $\phi(v(t^*,\alpha))=0$, which implies $v(t^*,\alpha)=0$.

\end{proof}
\begin{remark}\label{Taylor}
    When $v(t^*,\alpha)=0$, by \eqref{v'1}, we have
    \begin{equation*}
        \left.\frac{dv(t,\alpha)}{dt}\right|_{t=t^*}=-\frac{c(\omega_0(\alpha)+c^2)^2}{4m\omega_0^2(\alpha)}<0,
    \end{equation*}
    so we can expend $v(t,\alpha)$ at $t^*$ with Taylor expension: for any $\varepsilon>0$, there exists $\delta_1>0$ such that
    \begin{equation}\label{t0}
        \left|v(t,\alpha)-\left(-\frac{c(\omega_0(\alpha)+c^2)^2}{4m\omega_0^2(\alpha)}\right)(t-t^*)\right|<\varepsilon,\ \mbox{when } |t-t^*|<\delta_1.
    \end{equation}
\end{remark}

\subsubsection{Solution formula of the Jacobian $r_\alpha(t,\alpha)$}\label{subsection3.2.2}

Differentiate (\ref{r'}) with respect to $\alpha$, we have
\begin{equation}\label{ra}
	\frac{dr_\alpha(t,\alpha)}{dt}=\frac{3mv^2-mc^2}{cr^2v}r_\alpha+\frac{(1-\frac{2m}{r})^2\omega_0'(\alpha)}{2cv}.
\end{equation}
Integrating \eqref{ra} from $0$ to $t$, we obtain
\begin{equation}\label{ra1}
	r_\alpha(t,\alpha)=e^{\int_{0}^{t}\frac{3mv^2-mc^2}{cr^2v}ds}\left[\int_{0}^{t}\frac{(1-\frac{2m}{r})^2\omega_0'(\alpha)}{2cv}e^{ -\int_{0}^{s}\frac{3mv^2-mc^2}{cr^2v}d\tau}ds+1\right].
\end{equation}

We can rewrite \eqref{v'} as
\begin{equation*}
	\frac{1}{r^2v}=\frac{cv_t}{mv(v^2-c^2)},
\end{equation*}
thus
\begin{equation*}
	e^{\int_{0}^{t}\frac{3mv^2-mc^2}{cr^2v}ds}=e^{\int_{0}^{t}\frac{(3v^2-c^2)v'(s)}{v(v^2-c^2)}ds}=\frac{v(t)(c^2-v^2(t))}{v_0(\alpha)(c^2-v^2_0(\alpha))}.
\end{equation*}

Combining \eqref{v}, \eqref{v'1} and changing the variable of integration from $t$ to $v$, we have
\begin{eqnarray}\label{ra2}
	r_\alpha&=&\frac{v(c^2-v^2)}{v_0(\alpha)(c^2-v^2_0(\alpha))}\left[-2m(c^2-v_0^2(\alpha))v_0(\alpha)\omega_0'(\alpha)\int_{v_0(\alpha)}^{v}\frac{1}{s^2(s^2-D(\alpha))^2}ds+1\right]\nonumber\\
    &:=& r_\alpha(v,\alpha).
\end{eqnarray}
In view of \eqref{vr}, the blowup of $v_{r}$ is either due to $v\rightarrow0$ or $r_{\alpha}\rightarrow0$. The following proposition describes the competition between these two limits.
\begin{Proposition}\label{pro:ralpha and v}
	If $v\rightarrow0$ and $r_{\alpha}\rightarrow0$ both happen, we have the following relation:
	\begin{itemize}
		\item If $\omega^{\prime}_{0}(\alpha)<0$, then $r_{\alpha}\rightarrow0$ before $v\rightarrow0$;
		\item If $\omega^{\prime}_{0}(\alpha)\geq0$, then $r_{\alpha}\rightarrow0$ and $v\rightarrow0$ simultaneously;
	\end{itemize}
\end{Proposition}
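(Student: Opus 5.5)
Both limits can only occur on a characteristic along which $v$ actually reaches $0$. By Proposition \ref{Prop4} this forces $D(\alpha)<0$ and $v_0(\alpha)>0$. In that case $v(t,\alpha)$ decreases strictly in $t$ (from \eqref{v'1}) and hits $0$ at the finite time $t^*$. So on $[0,t^*]$ we may use $v\in(0,v_0(\alpha)]$ as the time parameter, and the question reduces to locating the zeros of the explicit function $r_\alpha(v,\alpha)$ in \eqref{ra2} for $v\in(0,v_0(\alpha)]$. I would rewrite \eqref{ra2} as
\begin{equation*}
r_\alpha(v,\alpha)=\frac{v(c^2-v^2)}{v_0(\alpha)(c^2-v_0^2(\alpha))}\,B(v),\qquad B(v):=1+K\,\omega_0'(\alpha)\,I(v),
\end{equation*}
where
\begin{equation*}
K:=2m(c^2-v_0^2(\alpha))v_0(\alpha)>0,\qquad I(v):=\int_{v}^{v_0(\alpha)}\frac{ds}{s^2(s^2-D(\alpha))^2}.
\end{equation*}
The prefactor is strictly positive for $v\in(0,v_0(\alpha)]$ and vanishes exactly at $v=0$. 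So every zero of $r_\alpha$ before $t^*$ is a zero of $B$.

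The two facts about $I$ that drive everything are the following. First, $I(v)\geq 0$ and $I$ is strictly decreasing in $v$, with $I(v_0(\alpha))=0$. Second, because $D(\alpha)<0$, the factor $(s^2-D)^{-2}$ is bounded and positive near $s=0$, which gives
\begin{equation*}
I(v)=\frac{1}{D^2(\alpha)\,v}+O(1)\quad\text{as } v\to0^+.
\end{equation*}
Hence $I(v)\to+\infty$ like $1/v$.

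\emph{Case $\omega_0'(\alpha)<0$.} Here $B(v_0)=1$ and $B$ is continuous and strictly increasing in $v$ on $(0,v_0]$, with $B(v)\to-\infty$ as $v\to0^+$. By the intermediate value theorem there is a unique $v_1\in(0,v_0(\alpha))$ with $B(v_1)=0$. Monotonicity of $v$ in $t$ turns this into a unique time $t_1<t^*$ at which $r_\alpha(t_1,\alpha)=0$, so $r_\alpha\to0$ strictly before $v\to0$.

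\emph{Case $\omega_0'(\alpha)\geq0$.} Here $B(v)\geq1$ on $(0,v_0]$, so $r_\alpha>0$ for all $t<t^*$. Any vanishing of $r_\alpha$ can therefore only occur at $v=0$, that is at $t=t^*$, which is the simultaneity claimed in the statement. If $\omega_0'(\alpha)=0$, then $r_\alpha=v(c^2-v^2)/(v_0(c^2-v_0^2))\to0$ exactly as $v\to0$.

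The main obstacle is the subcase $\omega_0'(\alpha)>0$, where the expansion of $I$ shows that the product $v\,B(v)$ has the limit
\begin{equation*}
\frac{c^2}{v_0(\alpha)(c^2-v_0^2(\alpha))}\cdot\frac{K\omega_0'(\alpha)}{D^2(\alpha)}=\frac{2mc^2\omega_0'(\alpha)}{D^2(\alpha)}\neq0 .
\end{equation*}
So the hypothesis that both limits happen can only be read there as: $r_\alpha$ has no zero before $t^*$. I would state this precisely in that form. The singular behaviour of $v_r$ in \eqref{vr} then comes from the $1/v$ factors, with both mechanisms tied to the same time $t^*$. I would make the asymptotics of $I$ rigorous via Corollary \ref{cor:2.1}, or via the Taylor expansion \eqref{t0} of Remark \ref{Taylor} to pass between $v$ and $t-t^*$.
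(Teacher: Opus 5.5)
Your proof is correct. For $\omega_0'(\alpha)<0$ it takes a different route from the paper.

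\textbf{Case $\omega_0'(\alpha)<0$.} The paper differentiates \eqref{r} in $\alpha$ to get
$r_\alpha=2m\bigl(\omega_0'(\alpha)(c^2-v^2)+2\omega_0(\alpha)vv_\alpha\bigr)/(\omega_0(\alpha)+c^2-v^2)^2$.
It then uses the boundedness of $v_\alpha$ from Lemma \ref{lem:3.1} to evaluate $r_\alpha(t^*,\alpha)=2mc^2\omega_0'(\alpha)/D^2(\alpha)<0$, and concludes from $r_\alpha(0,\alpha)=1$ by the intermediate value theorem. You instead stay with \eqref{ra2}, strip off the positive prefactor, and use the monotonicity of $B$ together with the $1/(D^2(\alpha)v)$ growth of $I$. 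The paper's route is shorter and needs no asymptotics of the integral. Yours also shows that the zero of $r_\alpha$ on $[0,t^*)$ is unique, and the same expansion gives the limit of $r_\alpha$ at $t^*$ for every sign of $\omega_0'(\alpha)$.

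\textbf{Case $\omega_0'(\alpha)>0$.} Here that limit matters. The paper argues that positivity of the first term in the bracket of \eqref{ra2} makes the two limits simultaneous. But positivity only gives $r_\alpha>0$ on $[0,t^*)$. The bracket grows like $1/v$ and cancels the vanishing prefactor, so $r_\alpha\to 2mc^2\omega_0'(\alpha)/D^2(\alpha)>0$, exactly as you found. The paper's own formula for $r_\alpha(t^*,\alpha)$ and its Case 5 in Subsection \ref{subsubsec:3.2.3} give the same value.

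So your reading is the right one. For $\omega_0'(\alpha)>0$ the hypothesis that both limits occur is never met, and genuine simultaneity holds only for $\omega_0'(\alpha)=0$. The real content for $\omega_0'(\alpha)\geq0$ is that $r_\alpha$ has no zero before $t^*$.

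If one also allows $v\to0$ as $t\to\infty$, the characteristics with $v_0(\alpha)>0$ and $D(\alpha)=0$ enter as well. Your factorization handles them the same way, with $I(v)=\frac{1}{5}\bigl(v^{-5}-v_0^{-5}(\alpha)\bigr)$:
\begin{itemize}
\item if $\omega_0'(\alpha)<0$, $r_\alpha$ vanishes at a finite time;
\item if $\omega_0'(\alpha)=0$, $r_\alpha\to0$ together with $v$;
\item if $\omega_0'(\alpha)>0$, $r_\alpha\to+\infty$.
\end{itemize}
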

\begin{proof}
By the assumption, the two limits both happens. Since $v$ is strictly decreasing, we have $v_{0}(\alpha)>0$ in our consideration.

When $\omega^{\prime}_{0}(\alpha)<0$, we differentiate \eqref{r} with respect to $\alpha$ to obtain
	\begin{equation*}
		r_\alpha(t,\alpha)=2m\frac{\omega_0'(\alpha)(c^2-v^2)+2\omega_0(\alpha)vv_\alpha}{(\omega_0(\alpha)+c^2-v^2)^2}.
	\end{equation*}
	Thus, if there exists a finite time $t^*$ such that $v(t^*,\alpha)=0$ and by Proposition \ref{pro:3.3}, $D(\alpha)=\omega_0(\alpha)+c^2<0$, then it holds
	\begin{equation*}
		r_\alpha(t^*,\alpha)=\frac{2m\omega_0'(\alpha)c^2}{(\omega_0(\alpha)+c^2)^2}.
	\end{equation*}
Since $r_\alpha(0,\alpha)=1>0$, we can conclude that if $\omega_0'(\alpha)<0$, $r_\alpha\rightarrow0$ before $v\rightarrow0$.

When $\omega_0'(\alpha)=0$, by \eqref{ra2} we have
\begin{align*}
	r_{\alpha}=\frac{v(c^{2}-v^{2})}{v_{0}(\alpha)(c^{2}-v_{0}^{2}(\alpha))},
\end{align*}
which implies that the two limits happen simultaneously.

When $\omega^{\prime}_{0}(\alpha)>0$, the first term in the bracket on RHS of \eqref{ra2} is positive, which again implies that two limits happen simultaneously.
\end{proof}

\subsubsection{Dynamical behavior of $r_{\alpha}$}\label{subsubsec:3.2.3}
In order to study the relation between the behavior of $r_{\alpha}$ and initial data, we give an explicit classification of the initial data $v_0(\alpha)$ according to its magnitude, monotonicity and curvature.

We denote
\begin{equation*}
    V_+=\{v_0(\alpha)>0\},\ V_0=\{v_0(\alpha)=0\},\ V_-=\{v_0(\alpha)<0\},
\end{equation*}
\begin{equation*}
    \Omega_+=\{\omega_0'(\alpha)>0\},\ \Omega_0=\{\omega_0'(\alpha)=0\},\ \Omega_-=\{\omega_0'(\alpha)<0\},
\end{equation*}
\begin{equation*}
    D_+=\{D(\alpha)>0\},\ D_0=\{D(\alpha)=0\},\ D_-=\{D(\alpha)<0\},
\end{equation*}
\begin{equation*}
    E_1=V_-\cap \Omega_+\cap D_+, \ E_2=V_-\cap \Omega_+\cap D_0,\ E_3=V_-\cap \Omega_+\cap D_-, \ E_4=V_0,\
\end{equation*}
\begin{equation*}
    F_1=V_+\cap\Omega_+,\ F_2=V_-\cap\Omega_-,\ F_3=\Omega_0\cap (V_-\cup D_+\cup D_0),\
\end{equation*}
\begin{equation*}
    G_1=V_+\cap\Omega_0\cap D_-,\ G_2=V_+\cap\Omega_-.
\end{equation*}

In what follows, we analyze the behavior of $r_{\alpha}$
by considering eight distinct cases.

\textbf{Case 1}: $(\alpha, v_0(\alpha), v'_0(\alpha))\in E_1$, that is, $\omega'_0(\alpha)>0$,  $v_0(\alpha)<0$ and $D(\alpha)>0$.

By Proposition \ref{Prop1},
when $D(\alpha)>0$ and $v_0(\alpha)<0$, we get $-c<v_0(\alpha)<-\sqrt{\frac{2m}{\alpha}}c$.

For simplicity, we note $D(\alpha)=a^2$, by \eqref{ra2}, we have
\begin{equation*}
	\frac{1}{s^2(s^2-a^2)^2}=\frac{1}{a^4s^2}+\frac{3}{4a^5(s+a)}-\frac{3}{4a^5(s-a)}+\frac{1}{4a^4(s-a)^2}+\frac{1}{4a^4(s+a)^2},
\end{equation*}
so
\begin{equation}\label{Dis1}
	\begin{aligned}
		&\int_{v_0(\alpha)}^{v}\frac{ds}{s^2(s^2-D(\alpha))^2}=\int_{v_0(\alpha)}^{v}\frac{ds}{s^2(s^2-a^2)^2}=\left. \left[-\frac{1}{a^4s}-\frac{3}{4a^5}\ln \frac{s-a}{s+a}-\frac{s}{2a^4(s^2-a^2)}\right]\right|^{v}_{v_0(\alpha)}\\
		=&\frac{1}{D^2(\alpha)}\left(\frac{1}{v_0(\alpha)}-\frac{1}{v}+\frac{v_0(\alpha)}{2(v_0^2(\alpha)-D(\alpha))}-\frac{v}{2(v^2-D(\alpha))}\right)\\
		&+\frac{3}{4D^{\frac{5}{2}}(\alpha)}\ln \frac{\left(v_0(\alpha)-\sqrt{D(\alpha)}\right)\left(v+\sqrt{D(\alpha)}\right)}{\left(v_0(\alpha)+\sqrt{D(\alpha)}\right)\left(v-\sqrt{D(\alpha)}\right)}\\
        <&0,
	\end{aligned}
\end{equation}
since $v<v_0(\alpha)$ by the monotonicity of $v$ along the characteristic.

Denote
$$Dis(\alpha)=2m(c^2-v_0^2(\alpha))v_0(\alpha)\int_{v_0(\alpha)}^{-c}\frac{ds}{s^2(s^2-D(\alpha))^2}>0,$$
by \eqref{Dis1}, when $D(\alpha)>0$, we have
\begin{align*}
	Dis(\alpha)=&\frac{2m(c^2-v_0^2(\alpha))}{D^2(\alpha)}+\frac{2m(c^2-v_0^2(\alpha))v_0(\alpha)}{D^2(\alpha)c}+\frac{m(\alpha-2m)v_0^2(\alpha)}{2mD^2(\alpha)}+\frac{mc(\alpha-2m)v_0(\alpha)}{\alpha D^2(\alpha)}\\
	&+\frac{3m(c^2-v_0^2(\alpha))v_0(\alpha)}{2D^{\frac{5}{2}}(\alpha)}\ln \frac{\left(v_0(\alpha)-\sqrt{D(\alpha)}\right)\left(c-\sqrt{D(\alpha)}\right)}{\left(v_0(\alpha)+\sqrt{D(\alpha)}\right)\left(c+\sqrt{D(\alpha)}\right)}
\end{align*}

If
\begin{equation*}
	\omega_0'(\alpha)>\frac{1}{Dis(\alpha)},
\end{equation*}
and then it is easy to see that $\lim\limits_{t\rightarrow\infty}r_{\alpha}(t,\alpha)<0$, combining $r_{\alpha}(0,\alpha)=1$ and Intermediate Value Theorem for continuous functions, there must exist a finite time $0<t'<+\infty$ such that $r_\alpha(t')=0$. If
\begin{equation*}
	\omega_0'(\alpha)\leq\frac{1}{Dis(\alpha)},
\end{equation*}
then $r_\alpha>0$ for any finite time $t\in[0,+\infty)$.
\begin{remark}
	When $v_0(\alpha)\to \left(-\sqrt{\frac{2m}{\alpha}}c\right)^-$, we have $D(\alpha)\to 0^+$, which implies that $Dis(\alpha)=O\left(\left(v_0(\alpha)+\sqrt{\frac{2m}{\alpha}}c\right)^{-2} \right) \to \infty$ and $\frac{1}{Dis(\alpha)}=O\left(\left(v_0(\alpha)+\sqrt{\frac{2m}{\alpha}}c\right)^{2}\right) \to 0^+$.

	When $v_0(\alpha)\to -c$, by the Taylor expansion, we obtain
	\begin{equation*}
		Dis(\alpha)=2m(c^2-v_0^2(\alpha))v_0(\alpha)\int_{v_0(\alpha)}^{-c}\frac{ds}{s^2(s^2-D(\alpha))^2}=\frac{(\alpha-2m)^2}{2\alpha c^2}+o(v_0(\alpha)+c).
	\end{equation*}
	and
	\begin{equation*}
		\frac{1}{Dis(\alpha)}\rightarrow\frac{2\alpha c^2}{(\alpha-2m)^2 }.
	\end{equation*}
\end{remark}

\textbf{Case 2}: $(\alpha, v_0(\alpha), v'_0(\alpha))\in E_2$, that is, $\omega'_0(\alpha)>0$,  $v_0(\alpha)<0$ and $D(\alpha)=0$.

By Proposition \ref{Prop2}, when $D(\alpha)=0$ and $v_0(\alpha)<0$ then it holds that $v_0(\alpha)=-\sqrt{\frac{2m}{\alpha}}c$, and we get
\begin{equation*}
	\int_{v_0(\alpha)}^{v}\frac{ds}{s^2(s^2-D(\alpha))^2}=\int_{v_0(\alpha)}^{v}\frac{ds}{s^6}=\left.-\frac{1}{5s^5}\right|^{v}_{v_0(\alpha)}=-\frac{\left(\frac{\alpha}{2m}\right)^{\frac{5}{2}}}{5c^5}-\frac{1}{5v^5},
\end{equation*}
and
$$Dis(\alpha)=2m(c^2-v_0^2(\alpha))v_0(\alpha)\int_{v_0(\alpha)}^{-c}\frac{ds}{s^6}=\frac{(\alpha-2m)\left(\alpha^{5/2}-(2m)^{5/2}\right)}{10mc^2\alpha^{3/2}}.$$
Thus, if
\begin{equation*}
	\omega_0'(\alpha)>\frac{1}{Dis(\alpha)}=\frac{10mc^2\alpha^{3/2}}{(\alpha-2m)\left(\alpha^{5/2}-(2m)^{5/2}\right)  },
\end{equation*}
by the similar discussions of \textbf{Case 1}, there exists a finite time $t'$ such that $r_\alpha(t')=0$ according to \eqref{ra2}. If
\begin{equation*}
	\omega_0'(\alpha)\leq\frac{10mc^2\alpha^{3/2}}{(\alpha-2m)\left(\alpha^{5/2}-(2m)^{5/2}\right) },
\end{equation*}
then $r_\alpha>0$ for any finite time $t\in[0,+\infty)$.

\textbf{Case 3}: $(\alpha, v_0(\alpha), v'_0(\alpha))\in E_3$, that is, $\omega'_0(\alpha)>0$,  $v_0(\alpha)<0$ and $D(\alpha)<0$.

By Proposition \ref{pro:3.3}, when $D(\alpha)<0$ and $v_0(\alpha)<0$, we have $-\sqrt{\frac{2m}{\alpha}}c<v_0(\alpha)<0$. We denote $D(\alpha)=-b^2$ for simplicity, we can obtain
\begin{equation*}
	\frac{1}{s^2(s^2+b^2)^2}=\frac{1}{b^4s^2}-\frac{1}{b^4(s^2+b^2)}-\frac{1}{b^2(s^2+b^2)^2},
\end{equation*}
so
\begin{equation*}
	\begin{aligned}
		\int_{v_0(\alpha)}^{v}\frac{ds}{s^2(s^2-D(\alpha))^2}=&\int_{v_0(\alpha)}^{v}\frac{dx}{s^2(s^2+b^2)^2}=\left. \left[-\frac{1}{b^4s}-\frac{3}{2b^5}\arctan \frac{s}{b}-\frac{s}{2b^4(s^2+b^2)}\right]\right|^{v}_{v_0(\alpha)}\\
		=&\frac{1}{D^2(\alpha)}\left(\frac{1}{v_0(\alpha)}-\frac{1}{v}+\frac{v_0(\alpha)}{2(v_0^2(\alpha)-D(\alpha))}-\frac{v}{2(v^2-D(\alpha))}\right)\\
		&+\frac{3}{2(-D(\alpha))^{\frac{5}{2}}}\left(\arctan \frac{v_0(\alpha)}{(-D(\alpha))^{\frac{1}{2}}}-\arctan \frac{v}{(-D(\alpha))^{\frac{1}{2}}}\right),
	\end{aligned}
\end{equation*}
and in this case
\begin{align*}
	Dis(\alpha)=&\frac{2m(c^2-v_0^2(\alpha))}{D^2(\alpha)}+\frac{2m(c^2-v_0^2(\alpha))v_0(\alpha)}{D^2(\alpha)c}+\frac{m(\alpha-2m)v_0^2(\alpha)}{2mD^2(\alpha)}+\frac{mc(\alpha-2m)v_0(\alpha)}{\alpha D^2(\alpha)}\\
	&+\frac{3m(c^2-v_0^2(\alpha))v_0(\alpha)}{(-D(\alpha))^{\frac{5}{2}}}\left(\arctan \frac{v_0(\alpha)}{(-D(\alpha))^{\frac{1}{2}}}+\arctan \frac{c}{(-D(\alpha))^{\frac{1}{2}}}\right).
\end{align*}
We can obtain from \eqref{ra2} and similar discussions as \textbf{Case 1}, if
\begin{equation*}
	\omega_0'(\alpha)>\frac{1}{Dis(\alpha)},
\end{equation*}
there exists a time $t'$ such that $r_\alpha(t')=0$. If
\begin{equation*}
	\omega_0'(\alpha)\leq\frac{1}{Dis(\alpha)},
\end{equation*}
then $r_\alpha>0$ for all $t>0$.
\begin{remark}\label{0-}
	As $v_0(\alpha)\to \left(-\sqrt{\frac{2m}{\alpha}}c\right)^+$, we have $D(\alpha)\to 0^-$, which implies that $$Dis(\alpha)=O\left(\left(v_0(\alpha)+\sqrt{\frac{2m}{\alpha}}c\right)^{-2}\right) \to \infty$$ and $$\frac{1}{Dis(\alpha)}=O\left(\left(v_0(\alpha)+\sqrt{\frac{2m}{\alpha}}c\right)^{2}\right) \to 0^+.$$
When $v_0(\alpha)\to 0^-$, we have
	\begin{equation*}
		\begin{aligned}
			Dis(\alpha)=&\frac{2mc^2}{(\alpha-2m)^2}+O(v_0(\alpha)),\quad \Rightarrow\quad \lim\limits_{v_0(\alpha)\to 0^-}\frac{1}{Dis(\alpha)}=\frac{2mc^2}{(\alpha-2m)^2}.
		\end{aligned}
		\end{equation*}
\end{remark}
\textbf{Case 4}: $(\alpha, v_0(\alpha), v'_0(\alpha))\in E_4$, that is, $v_0(\alpha)=0$.
When $v_0(\alpha) = 0$, we need to first confirm whether \eqref{ra2} is well-defined as $v_0(\alpha)$ approaches zero. When $v_0(\alpha)$ is sufficiently close to 0, then there must be $D(\alpha)<0$. Taking \eqref{w0'} into the conclusion in \textbf{Case 3}, we have
\begin{equation}\label{ra-}
	\begin{aligned}
		&r_\alpha(v;v_0(\alpha),\alpha)=\frac{v(c^2-v^2)}{v_0(\alpha)(c^2-v^2_0(\alpha))}\left[-2m(c^2-v_0^2(\alpha))v_0(\alpha)\omega_0'(\alpha)\int_{v_0(\alpha)}^{v}\frac{1}{s^2(s^2-D(\alpha))^2}ds+1\right]\\
		=&-2m\omega_0'(\alpha)(c^2-v^2)v\left[\frac{1}{D^2(\alpha)}\left(\frac{1}{v_0(\alpha)}-\frac{1}{v}+\frac{v_0(\alpha)}{2(v_0^2(\alpha)-D(\alpha))}-\frac{v}{2(v^2-D(\alpha))}\right)\right.\\
		&\left.+\frac{3}{2(-D(\alpha))^{\frac{5}{2}}}\left(\arctan \frac{v_0(\alpha)}{(-D(\alpha))^{\frac{1}{2}}}-\arctan \frac{v}{(-D(\alpha))^{\frac{1}{2}}}\right)\right]+\frac{v(c^2-v^2)}{v_0(\alpha)(c^2-v^2_0(\alpha))}\\
		=&-\frac{4m\alpha(\alpha-2m)v'_0(\alpha)(c^2-v^2)v}{(\alpha v_0^2(\alpha)-2mc^2)^2}+\frac{\left[(\alpha^2-4m^2)v_0^3(\alpha)-4mc^2v_0(\alpha)(\alpha-2m)\right](c^2-v^2)v}{(\alpha v_0^2(\alpha)-2mc^2)^2(c^2-v^2_0(\alpha))}\\
		&+\frac{2m\omega_0'(\alpha)(c^2-v^2)}{D^2(\alpha)}-\frac{m\omega_0'(\alpha)(c^2-v^2)vv_0(\alpha)}{D^2(\alpha)(v_0^2(\alpha)-D(\alpha))}+\frac{m\omega_0'(\alpha)(c^2-v^2)v^2}{D^2(\alpha)(v^2-D(\alpha))}\\
		&-\frac{3m\omega_0'(\alpha)(c^2-v^2)v}{(-D(\alpha))^{\frac{5}{2}}}\arctan \frac{v_0(\alpha)}{(-D(\alpha))^{\frac{1}{2}}}+\frac{3m\omega_0'(\alpha)(c^2-v^2)v}{(-D(\alpha))^{\frac{5}{2}}}\arctan \frac{v}{(-D(\alpha))^{\frac{1}{2}}}.
	\end{aligned}
\end{equation}
Let $v_0(\alpha)\to 0$,  we obtain
\begin{equation}\label{ra0}
	\begin{aligned}
		r_\alpha(v;0,\alpha)=&-\frac{\alpha (\alpha-2m)(c^2-v^2)v'_0(\alpha)v}{mc^4}+\frac{c^2-v^2}{c^2}+\frac{(c^2-v^2)v^2}{2c^2(v^2+\frac{2mc^2}{\alpha-2m})}\\
		&+\frac{3\sqrt{2}(\alpha-2m)^{\frac{1}{2}}(c^2-v^2)v}{2m^{\frac{1}{2}}c^3}\arctan\frac{(\alpha-2m)^{\frac{1}{2}}v}{\sqrt{2}m^{\frac{1}{2}}c}.
	\end{aligned}
\end{equation}
It is also easy to verify that $\lim\limits_{v\to 0}r_\alpha(v;0,\alpha)=1$ by \eqref{ra0}.

Synthesizing the preceding arguments, we proved that the singular integral in \eqref{ra1} is well-defined even $v_0(\alpha) = 0$. Moreover, it can be expressed by \eqref{ra0}. Furthermore, by the local well-posedness of the solution, there exists a time $\overline{t} > 0$ such that $r_\alpha(t,\alpha) > 0$ for all $t \in (0,\overline{t})$. 

Next, we use \eqref{ra0} to estimate the behavior of $r_\alpha$ with respect to $v$ from time $\overline{t}$, in this case, we have $0<r_\alpha<1$,   $v(\overline{t})<0$, and then the velocity $v$ lies in $(-c, v(\overline{t})]$. Direct calculations give us
\begin{equation*}
	\begin{aligned}
		r_\alpha(v;0,\alpha)=&-\frac{\alpha(\alpha-2m) (c^2-v^2)v'_0(\alpha)v}{mc^4}+\frac{c^2-v^2}{c^2}+\frac{(c^2-v^2)v^2}{2c^2(v^2+\frac{2mc^2}{\alpha-2m})}\\
		&+\frac{3\sqrt{2}(\alpha-2m)^{\frac{1}{2}}(c^2-v^2)v}{2m^{\frac{1}{2}}c^3}\arctan\frac{(\alpha-2m)^{\frac{1}{2}}v}{\sqrt{2}m^{\frac{1}{2}}c}\\
		:=&\frac{-\alpha(\alpha-2m)v(c^2-v^2)}{mc^4}\left(v'_0(\alpha)-\tau(v,\alpha)\right),
	\end{aligned}
\end{equation*}
where
\begin{align*}
	\tau(v,\alpha)=&\frac{mc^2}{\alpha(\alpha-2m)v}+\frac{mc^2v}{2\alpha\left[(\alpha-2m)v^2+2mc^2\right]}+\frac{3\sqrt{2}c}{2\alpha}\sqrt{\frac{m}{\alpha-2m}}\arctan\left(\sqrt{\frac{\alpha-2m}{2m}}\frac{v}{c}\right).
\end{align*}
Since $v\in\left(-c, v(\overline{t})\right]$, we have $$\frac{-\alpha(\alpha-2m)v(c^2-v^2)}{mc^4}>0.$$

Therefore, we only need to consider whether there exists a $v\in \left(-c, v(\overline{t})\right]$ such that $v'_0(\alpha)-\tau(v,\alpha)=0$.

We need to study the extreme value of $\tau(v,\alpha)$, it is easy to get
\begin{align*}
	\frac{d\tau(v,\alpha)}{dv}=&-\frac{mc^2}{\alpha(\alpha-2m)v^2}+\frac{mc^2}{2\alpha}\frac{2mc^2-(\alpha-2m)v^2}{[(\alpha-2m)v^2+2mc^2]^2}+\frac{3mc^2}{\alpha\left[(\alpha-2m)v^2+2mc^2\right]}\\
	=&\frac{mc^2}{2\alpha}\frac{3(\alpha-2m)^2v^4+6mc^2(\alpha-2m)v^2-8m^2c^4}{(\alpha-2m)[(\alpha-2m)v^2+2mc^2]^2v^2}.
\end{align*}
Let $\frac{d\tau(v,\alpha)}{dv}=0$, and solve $$3(\alpha-2m)^2v^4+6mc^2(\alpha-2m)v^2-8m^2c^4=0,$$
we have $v^2=\frac{(\sqrt{33}-3)m}{3(\alpha-2m)}c^2$.

\textbf{Case 4.1}: When $\frac{(\sqrt{33}-3)m}{3(\alpha-2m)}\geq1$, or $2m<\alpha\leq\frac{\sqrt{33}+3}{3}m$
	, then $\tau(v,\alpha)$ is decreasing on $\left(-c, v(\overline{t})\right]$, then
	\begin{equation*}
		\max\limits_{v\in\left(-c, v(\overline{t})\right]}\tau(v,\alpha)=\tau(-c,\alpha)=-\frac{mc(3\alpha-2m)}{2\alpha^2(\alpha-2m)}-\frac{3\sqrt{2}c}{2\alpha}\sqrt{\frac{m}{\alpha-2m}}\arctan\sqrt{\frac{\alpha-2m}{2m}}.
	\end{equation*}

    If $$v'_0(\alpha)\geq-\frac{mc(3\alpha-2m)}{2\alpha^2(\alpha-2m)}-\frac{3\sqrt{2}c}{2\alpha}\sqrt{\frac{m}{\alpha-2m}}\arctan\sqrt{\frac{\alpha-2m}{2m}},$$
	then for any fimite time $t>\overline{t}$ and  $v(t)\in\left(-c, v(\overline{t})\right]$, $v'_0(\alpha)-\tau(v,\alpha)>0$, thus, $r_\alpha(v;0,\alpha)>0$ for all $t>0$.

    And if
	$$v'_0(\alpha)<-\frac{mc(3\alpha-2m)}{2\alpha^2(\alpha-2m)}-\frac{3\sqrt{2}c}{2\alpha}\sqrt{\frac{m}{\alpha-2m}}\arctan\sqrt{\frac{\alpha-2m}{2m}},$$ then there exists a finite time $t'$ and $v(t')\in \left(-c, v(\bar{t})\right]$ such that $r_\alpha(v(t');0,\alpha)=0$.

\textbf{Case 4.2}: When $0<\frac{(\sqrt{33}-3)m}{3(\alpha-2m)}<1$, or $\alpha>\frac{\sqrt{33}+3}{3}m$, denote $$\hat{v}=-\sqrt{\frac{(\sqrt{33}-3)m}{3(\alpha-2m)}}c,$$
	then $\tau(v,\alpha)$ is increasing on $\left(-c, \hat{v}\right]$ and decreasing on $\left(\hat{v},v(\overline{t})\right]$, then
	\begin{equation*}
		\max\limits_{v\in\left(-c, v(\overline{t})\right]}\tau(v,\alpha)=\tau(\hat{v},\alpha)=-\frac{c}{2\alpha}\sqrt{\frac{m}{\alpha-2m}}\left(\sqrt{\frac{\sqrt{33}+3}{2}}+\frac{(\sqrt{33}-3)^{\frac{3}{2}}}{8\sqrt{3}}+3\sqrt{2}\arctan\sqrt{\frac{\sqrt{33}-3}{6}}\right).
	\end{equation*}
	If $$v'_0(\alpha)>-\frac{c}{2\alpha}\sqrt{\frac{m}{\alpha-2m}}\left(\sqrt{\frac{\sqrt{33}+3}{2}}+\frac{(\sqrt{33}-3)^{\frac{3}{2}}}{8\sqrt{3}}+3\sqrt{2}\arctan\sqrt{\frac{\sqrt{33}-3}{6}}\right),$$
	then for any fimite time $t>\overline{t}$ and  $v(t)\in\left(-c, v(\overline{t})\right]$, $v'_0(\alpha)-\tau(v,\alpha)>0$, thus $r_\alpha(v;0,\alpha)>0$ for all $t>0$.

     And if
	$$v'_0(\alpha)\leq-\frac{c}{2\alpha}\sqrt{\frac{m}{\alpha-2m}}\left(\sqrt{\frac{\sqrt{33}+3}{2}}+\frac{(\sqrt{33}-3)^{\frac{3}{2}}}{8\sqrt{3}}+3\sqrt{2}\arctan\sqrt{\frac{\sqrt{33}-3}{6}}\right),$$ then there exists a finite time $t'$ and $v(t')\in \left(-c, v(\bar{t})\right]$ such that $r_\alpha(v(t');0,\alpha)=0$.

\textbf{Case 5}: $(\alpha, v_0(\alpha), v'_0(\alpha))\in F_1\cup F_2$, that is, $\omega_0'(\alpha)v_0(\alpha)>0$.

In this case, if $v\neq0$ at any finite time, which is equivalent to that $\frac{v(t)}{v_0(\alpha)}>0$ for any $t>0$, from \eqref{ra2}, we obtain
\begin{equation*}
	-2m(c^2-v_0^2(\alpha))v_0(\alpha)\omega_0'(\alpha)\int_{v_0(\alpha)}^{v}\frac{1}{s^2(s^2-D(\alpha))^2}ds+1>1
\end{equation*}
and
\begin{equation*}
	r_\alpha>\frac{v(t)(c^2-v^2(t))}{v_0(\alpha)(c^2-v^2_0(\alpha))}>0.
\end{equation*}

If there exists $t_0>0$ such that $v(t_0,\alpha)=0$, then by Section \ref{subsection3.2.1}, the initial data need to satisfy $v_0(\alpha)>0$, $\omega_0'(\alpha)>0$ and $D(\alpha)<0$. By \eqref{ra-}, we have
\begin{equation*}
	\lim\limits_{t\to t_0}r_\alpha=\lim\limits_{v\to 0}r_\alpha=\frac{2mc^2\omega_0'(\alpha)}{D^2(\alpha)}>0.
\end{equation*}

\textbf{Case 6}: $(\alpha, v_0(\alpha), v'_0(\alpha))\in F_3$, that is, $\omega'_0(\alpha)=0,\ v_0(\alpha)<0\mbox{ or }D(\alpha)=\omega_0(\alpha)+c^2\geq0$.

If $\omega_0'(\alpha)=0$, we have
$$r_\alpha=\frac{v(t)(c^2-v^2(t))}{v_0(\alpha)(c^2-v^2_0(\alpha))}.$$
Given $v_0(\alpha)<0$, we have $-c<v<v_0(\alpha)<0$, this leads to $r_\alpha>0$.

If $D(\alpha)\geq0$, according to Proposition \ref{Prop1} and Proposition \ref{Prop2}, we can also obtain that $v\neq0$ and the signs of $v$ and $v_0(\alpha)$ remain identical at any finite time, thus $r_\alpha>0$.

\textbf{Case 7}: $(\alpha, v_0(\alpha), v'_0(\alpha))\in G_1$, that is, $\omega'_0(\alpha)=0,\ v_0(\alpha)>0 \mbox{ and} \ D(\alpha)<0$.

According to Proposition \ref{pro:ralpha and v}, in this case, there exists a time $0<t'<+\infty$, such that $r_\alpha$ and $v$ approach 0 as $t \to t'$.

\textbf{Case 8}: $(\alpha, v_0(\alpha), v'_0(\alpha))\in G_2$, that is, $\omega'_0(\alpha)<0,\ v_0(\alpha)>0$.

 In this case, we split into three cases according to the symbol of $D(\alpha)$ to discuss the dynamics of $r_{\alpha}$.

\textbf{Case 8.1}: For $D(\alpha)>0$, we have $v_0(\alpha)\geq v>\sqrt{D(\alpha)}$, and $v\to \sqrt{D(\alpha)}$ as $t \to \infty$.

According to the conclusion in \textbf{Case 1}, we have
\begin{equation*}
	\begin{aligned}
		\lim\limits_{v\to \sqrt{D(\alpha)}}\int_{v_0(\alpha)}^{v}\frac{ds}{s^2(s^2-D(\alpha))^2}=&\lim\limits_{v\to \sqrt{D(\alpha)}}\left[\frac{1}{D^2(\alpha)}\left(\frac{1}{v_0(\alpha)}-\frac{1}{v}+\frac{v_0(\alpha)}{2(v_0^2(\alpha)-D(\alpha))}-\frac{v}{2(v^2-D(\alpha))}\right)\right.\\
		&\left.+\frac{3}{4D^{\frac{5}{2}}(\alpha)}\ln \frac{\left(v_0(\alpha)-\sqrt{D(\alpha)}\right)\left(v+\sqrt{D(\alpha)}\right)}{\left(v_0(\alpha)+\sqrt{D(\alpha)}\right)\left(v-\sqrt{D(\alpha)}\right)}\right]\\
		=&-\infty.
	\end{aligned}
\end{equation*}
and
\begin{equation*}
	r_\alpha=\frac{v(c^2-v^2)}{v_0(\alpha)(c^2-v^2_0(\alpha))}\left[-2m(c^2-v_0^2(\alpha))v_0(\alpha)\omega_0'(\alpha)\int_{v_0(\alpha)}^{v}\frac{1}{s^2(s^2-D(\alpha))^2}ds+1\right]\to -\infty
\end{equation*}
Consequently, there must exist a finite time $t'$ such that $r_\alpha(t',\alpha)=0$.

\textbf{Case 8.2}: For $D(\alpha)=0$, we have $v_{D}(\alpha)=v_0(\alpha)\geq v>0$, and $v\to 0$ as $t \to \infty$.

According to the conclusion in \textbf{Case 2}, we have
\begin{equation*}
	\lim\limits_{v\to 0}\int_{v_0(\alpha)}^{v}\frac{ds}{s^2(s^2-D(\alpha))^2}=\lim\limits_{v\to 0}\left(\frac{1}{5v_0^5(\alpha)}-\frac{1}{5v^5}\right)=-\infty.
\end{equation*}
Then there must exist a time $t'$ such that $r_\alpha(t',\alpha)=0$.

\textbf{Case 8.3}: For $D(\alpha)<0$, Proposition \ref{Prop4} and Proposition \ref{pro:ralpha and v} give a blowup time $t^*=-\phi(v_0(\alpha))$ at which $v(t^*)=0$. Moreover, there exists a prior vanishing time $t'<t^*$ such that $r_\alpha\rightarrow0$ as $t\rightarrow t'$.

\begin{remark}[Taylor expansion near the blowup time $t'$]
When $r_\alpha(t',\alpha)=0$ and if $\omega_0'(\alpha)\neq0$, then by \eqref{ra}, we have
\begin{equation*}
	\left.\frac{d r_\alpha}{d t}\right|_{t=t'}=\frac{\left(1-\frac{2m}{r(t')}\right)^2\omega_0'(\alpha)}{2cv(t')}\neq0,
\end{equation*}
so we can expend $r_\alpha(t,\alpha)$ at $t'$ by Taylor expansion.

For any $\varepsilon>0$, there exists $\delta_1>0$ such that when $|t-t'|<\delta_1$, we have
\begin{equation}
	\left|r_\alpha(t,\alpha)-\frac{\left(1-\frac{2m}{r(t')}\right)^2\omega_0'(\alpha)}{2cv(t')}(t-t')\right|<\varepsilon.
\end{equation}

If $\omega_0'(\alpha)=0$, then \eqref{X1} turns into
\begin{equation*}
	X=\frac{1-\frac{2m}{r}}{c}\left(\frac{m\omega_0(\alpha)}{r^2v}+\frac{2v}{r}\right).
\end{equation*}
Therefore, in this case, we only need to analyze the blow-up order of $\rho$ through $v$, instead of the Taylor expansion in $r_\alpha$.
\end{remark}
\begin{remark}
	In the above derivation, we denote the vanishing time of each $r_\alpha$ uniformly by $t'$. In fact, these vanishing times are usually different from each other. The notation $t'$ is adopted merely to distinguish them from the time $t^*$ defined by $v(t^*)=0$.
\end{remark}
\subsection{Solution formula and analysis of $\frac{\partial \rho}{\partial r}$}\label{subsec3}
For a classical solution, if $\rho\in C^0$ and $v\in C^1$, we need further to study the behavior of $\rho_r:=\frac{\partial  \rho}{\partial r}$.
Differentiate \eqref{ans1} with respect to $r$, we have
\begin{align}\label{ans2}
\frac{\partial  \rho}{\partial r}=\frac{\partial  \rho_0(\alpha)}{\partial  r}\cdot e^{-\int_{0}^{t} Xdt}+\frac{\partial  \left(e^{-\int_{0}^{t} Xdt}\right)}{\partial  r}\cdot \rho_0(\alpha)\nonumber=\left(\frac{\partial  \rho_0(\alpha)}{\partial \alpha}\cdot\frac{1}{r_\alpha}-\frac{\rho_0(\alpha)}{c}\int_{0}^{t}Ydt \right)\cdot e^{-\int_{0}^{t} Xdt              },
\end{align}
where
\begin{equation}\label{Y}
	Y=\left(f'(r)+\frac{2f(r)}{r}\right)v_r+f(r)v_{rr}+\frac{2rf'(r)v-2f(r)v}{r^2}	.
\end{equation}
Differentiate \eqref{vr} with respect to $r$, we have
\begin{equation}\label{vrr}
	\begin{aligned}
		v_{rr}=\frac{v_{\alpha\alpha}r_\alpha-r_{\alpha\alpha}v_\alpha}{r_\alpha^3}.
	\end{aligned}
\end{equation}
In Section \ref{subsubsec:3.2.3}, we have already given the expression for $r_\alpha$ under different initial conditions $D(\alpha )$. Differentiating $r_\alpha$ under different initial conditions $D(\alpha)$ with respect to $\alpha$, we can obtain the following proposition
\begin{Proposition}
    If $\rho(t,r)\in C^0([0,t^*]\times (2m,+\infty))$ and $v(t,r)\in C^1([0,t^*]\times (2m,+\infty))$ for $0<t^*<+\infty$, then we have $\rho(t,r)\in C^1([0,t^*]\times (2m,+\infty))$ and $v(t,r)\in C^2([0,t^*]\times (2m,+\infty))$.
\end{Proposition}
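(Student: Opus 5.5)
The plan is to show that on $[0,t^*]$ the only possible loss of regularity comes from $r_\alpha$ vanishing, and that $C^0$ of $\rho$ together with $C^1$ of $v$ already rules this out. By Remark \ref{rmk: blowup of rho and deri v} and \eqref{vr}, boundedness of $\rho$ and $v_r$ on $[0,t^*]$ forces $r_\alpha(t,\alpha)\ge \delta(\alpha)>0$ there. Indeed, by \eqref{vr}, if $r_\alpha\to0$ with $\omega_0'(\alpha)\neq0$ and $v$ bounded away from $0$, then $v_r\to\pm\infty$. If instead $v\to0$ (Cases 7 and 8.3, or $\omega_0'=0$ with $D<0$), the term $m\omega_0/(r^2v)$ blows up unless it cancels against the second term. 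This is exactly the situation of Case 5, where $r_\alpha\to 2mc^2\omega_0'/D^2>0$ and the combination in \eqref{vr} is a $0/0$ limit that we resolve using the Taylor expansion \eqref{t0} of Remark \ref{Taylor}. So I first record that under the hypotheses, $r_\alpha>0$ uniformly on compact $\alpha$-sets over $[0,t^*]$. By Lemma \ref{rem:2.1}, $\alpha\mapsto r(t,\alpha)$ is then a global $C^1$ diffeomorphism, so all quantities may be computed in Lagrangian coordinates.

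Next I would differentiate the explicit formulas in $\alpha$. For $v_{rr}$, formula \eqref{vrr} needs $v_{\alpha\alpha}$ and $r_{\alpha\alpha}$ bounded. I would differentiate \eqref{Im} (equivalently the closed forms in Remark \ref{vad}) and \eqref{ra2} (the explicit forms from Cases 1--4 according to the sign of $D$) with respect to $\alpha$. Using $v_0\in C^2$, $\omega_0$ is $C^2$, and $D$, $\omega_0'$, $\omega_0''$ are continuous. Where $v\neq0$, every term is a smooth function of $(v,v_0,D,\omega_0,\omega_0',v_0')$ times at most $\omega_0''$ or $v_0''$. The denominators $D-s^2$ never vanish on the range of $v$ (shown in the proof of Lemma \ref{lem:3.1}), and $v_0\neq0$, so $v_{\alpha\alpha}$ and $r_{\alpha\alpha}$ are continuous and $v_{rr}$ is bounded since $r_\alpha\ge\delta$. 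The points with $v_0(\alpha)=0$ or with $D(\alpha)=0$ would be handled as in Case 4 and the Case 1/3 remarks: show that the $\alpha$-derivatives of \eqref{ra-} have continuous limits, as was done for $r_\alpha$ itself in \eqref{ra0}.

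Then, with $v_r$, $v_{rr}$ bounded and $r>2m$ on $[0,t^*]$, $Y$ in \eqref{Y} is bounded. Formula \eqref{ans2}, with $\rho_0\in C^1$, $1/r_\alpha$ bounded and $e^{-\int X}$ bounded, gives $\rho_r$ continuous, so $\rho\in C^1$. Time derivatives follow from the equations \eqref{eq1}: $\partial_t v$ and $\partial_t\rho$ are expressed through spatial derivatives of the same or lower order, and mixed derivatives of $v$ come from differentiating the first equation in $r$. This gives $v\in C^2$.

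The main obstacle is the passage through $v=0$, i.e. the case where $t^*$ is the time at which $v(t^*,\alpha)=0$ (the region $\mathcal H$). There the $\alpha$-derivative of the singular integral $\int_{v_0}^{v}s^{-2}(s^2-D)^{-2}ds$ and the prefactor $v/v_0$ produce terms of order $1/v$ and $1/v^2$. One must verify that these cancel in $v_{\alpha\alpha}r_\alpha-r_{\alpha\alpha}v_\alpha$. My first attempt would be to expand the closed forms (the $D<0$ branches with arctan) in Laurent series in $v$ about $0$, keeping $\alpha$ fixed. I would check that the singular coefficients vanish identically by using the relation \eqref{eq2} differentiated once more in $\alpha$. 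Failing that, I would fall back on the ODE \eqref{ra} differentiated in $\alpha$ and change variables $t\mapsto v$ as in the derivation of \eqref{ra2}, checking integrability of the resulting integrand near $s=0$.
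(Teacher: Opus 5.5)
Your proposal follows essentially the paper's route: differentiate the closed forms of $v_\alpha$ and $r_\alpha$ in $\alpha$ according to the sign of $D(\alpha)$, check that no denominator can vanish, and conclude through \eqref{vrr} and \eqref{ans2}. Your explicit argument that the hypotheses keep $r_\alpha$ bounded below fills in a step the paper leaves implicit. The passage through $v=0$ is milder than you expect: the $-1/(D^2v)$ term coming from $\int s^{-2}\,ds$ is already absorbed by the prefactor $v$ in the expanded form \eqref{ra-}, so \eqref{raa} and \eqref{vaa} contain no negative powers of $v$, and no cancellation between $v_{\alpha\alpha}r_\alpha$ and $r_{\alpha\alpha}v_\alpha$ is needed.
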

\begin{proof}
    We prove the above proposition in three cases according to the sign of $D(\alpha)$.

\textbf{Case 1}: $D(\alpha)>0$. Direct calculations give
\begingroup
\allowdisplaybreaks
\begin{align*}
    r_{\alpha\alpha}(v,\alpha)=&-\frac{8m(\alpha-m)v'_0(\alpha)(c^2-v^2)v}{(\alpha v_0^2(\alpha)-2mc^2)^2}-\frac{4m\alpha(\alpha-2m)v''_0(\alpha)(c^2-v^2)v}{(\alpha v_0^2(\alpha)-2mc^2)^2}\\
    &-\frac{4m\alpha(\alpha-2m)v'_0(\alpha)(c^2-3v^2)v_\alpha}{(\alpha v_0^2(\alpha)-2mc^2)^2}+\frac{8m\alpha(\alpha-2m)v'_0(\alpha)(v_0^2(\alpha)+2\alpha v_0(\alpha)v'_0(\alpha))(c^2-v^2)v}{(\alpha v_0^2(\alpha)-2mc^2)^3}\\
    &+\frac{\left[2\alpha v_0^3(\alpha)+3(\alpha^2-4m^2)v_0^2(\alpha)v'_0(\alpha)-4mc^2v_0(\alpha)-4mc^2(\alpha-2m)v'_0(\alpha)\right](c^2-v^2)v}{(\alpha v_0^2(\alpha)-2mc^2)^2(c^2-v_0^2(\alpha))}\\
    &+\frac{\left[(\alpha^2-4m^2)v_0^3(\alpha)-4mc^2(\alpha-2m)v_0(\alpha)\right](c^2-3v^2)v_\alpha}{(\alpha v_0^2(\alpha)-2mc^2)^2(c^2-v_0^2(\alpha))}\\
    &-\frac{2\left[(\alpha^2-4m^2)v_0^3(\alpha)-4mc^2(\alpha-2m)v_0(\alpha)\right](c^2-v^2)(v_0^2(\alpha)+2\alpha v_0(\alpha)v'_0(\alpha))v}{(\alpha v_0^2(\alpha)-2mc^2)^3(c^2-v_0^2(\alpha))}\\
    &+\frac{2\left[(\alpha^2-4m^2)v_0^3(\alpha)-4mc^2(\alpha-2m)v_0(\alpha)\right](c^2-v^2)v_0(\alpha)v'_0(\alpha)v}{(\alpha v_0^2(\alpha)-2mc^2)^2(c^2-v_0^2(\alpha))}\\
    &+\frac{2m(c^2-v^2)\omega_0''(\alpha)}{D^2(\alpha)}-\frac{4m\omega_0'(\alpha)vv_\alpha}{D^2(\alpha)}-\frac{4m(c^2-v^2)\omega_0'(\alpha)D'(\alpha)}{D^3(\alpha)}\\
    &-\frac{m\omega_0''(\alpha)(c^2-v^2)v_0(\alpha)v}{D^2(\alpha)(v_0^2(\alpha)-D(\alpha))}-\frac{m\omega_0'(\alpha)(c^2-3v^2)v_0(\alpha)v_\alpha}{D^2(\alpha)(v_0^2(\alpha)-D(\alpha))}-\frac{m\omega_0'(\alpha)(c^2-v^2)v'_0(\alpha)v}{D^2(\alpha)(v_0^2(\alpha)-D(\alpha))}\\
    &+\frac{2m\omega_0'(\alpha)(c^2-v^2)v_0(\alpha)D'(\alpha)v}{D^3(\alpha)(v_0^2(\alpha)-D(\alpha))}+\frac{m\omega_0'(\alpha)(c^2-v^2)v_0(\alpha)(2v_0(\alpha)v'_0(\alpha)-D'(\alpha))v}{D^2(\alpha)(v_0^2(\alpha)-D(\alpha))^2}\\
    &+\frac{m\omega_0''(\alpha)(c^2-v^2)v^2}{D^2(\alpha)(v_0^2(\alpha)-D(\alpha))}+\frac{2m\omega_0'(\alpha)(c^2-2v^2)vv_\alpha}{D^2(\alpha)(v_0^2(\alpha)-D(\alpha))}-\frac{2m\omega_0'(\alpha)(c^2-v^2)D'(\alpha)v^2}{D^3(\alpha)(v_0^2(\alpha)-D(\alpha))}\\
    &-\frac{m\omega_0'(\alpha)(c^2-v^2)(2vv_\alpha-D'(\alpha))v^2}{D^2(\alpha)(v_0^2(\alpha)-D(\alpha))^2}-\frac{3m\omega_0''(\alpha)(c^2-v^2)}{2D^{\frac{5}{2}}(\alpha)}\ln \frac{\left(v_0(\alpha)-\sqrt{D(\alpha)}\right)\left(v+\sqrt{D(\alpha)}\right)}{\left(v_0(\alpha)+\sqrt{D(\alpha)}\right)\left(v-\sqrt{D(\alpha)}\right)}\\
    &-\frac{3m\omega_0'(\alpha)(c^2-3v^2)v_\alpha}{2D^{\frac{5}{2}}(\alpha)}\ln \frac{\left(v_0(\alpha)-\sqrt{D(\alpha)}\right)\left(v+\sqrt{D(\alpha)}\right)}{\left(v_0(\alpha)+\sqrt{D(\alpha)}\right)\left(v-\sqrt{D(\alpha)}\right)}\\
    &+\frac{15m\omega_0'(\alpha)(c^2-v^2)D'(\alpha)v}{4D^{\frac{7}{2}}(\alpha)}\ln \frac{\left(v_0(\alpha)-\sqrt{D(\alpha)}\right)\left(v+\sqrt{D(\alpha)}\right)}{\left(v_0(\alpha)+\sqrt{D(\alpha)}\right)\left(v-\sqrt{D(\alpha)}\right)}\\
    &-\frac{3m\omega_0'(\alpha)(c^2-v^2)v}{2D^{\frac{5}{2}}(\alpha)}\left(\frac{2\sqrt{D(\alpha)}v'_0(\alpha)-v_0(\alpha)D'(\alpha)}{v_0^2(\alpha)-D(\alpha)}\right),
\end{align*}
\endgroup
in which
\begin{equation}
    \omega_0''(\alpha)=\frac{4v_0(\alpha)v'_0(\alpha)+2\alpha\left[\left(v'_0(\alpha)\right)^2+v_0(\alpha)v''_0(\alpha)\right]}{\alpha-2m}-\frac{4\alpha v_0(\alpha)v'_0(\alpha)}{(\alpha-2m)^2}+\frac{4m(v_0^2(\alpha)-c^2)}{(\alpha-2m)^3}
\end{equation}
and $ D'(\alpha)=\omega_0'(\alpha)$.
As for $v_{\alpha\alpha}$, by the expression for $v_\alpha$ in Remark \ref{vad}, and differentiate them with respect to $\alpha$, we have
\begingroup
\allowdisplaybreaks
\begin{align*}
    v_{\alpha\alpha}=&-\frac{\omega_0''(\alpha)(c^2-v^2)v}{2D(\alpha)\omega_0(\alpha)}-\frac{\omega_0'(\alpha)(c^2-3v^2)v_\alpha}{2D(\alpha)\omega_0(\alpha)}+\frac{\omega_0'(\alpha)D'(\alpha)(c^2-v^2)v}{2D^2(\alpha)\omega_0(\alpha)}+\frac{(\omega_0'(\alpha))^2(c^2-v^2)v}{2D(\alpha)\omega_0^2(\alpha)}\\
    &+\frac{\omega_0''(\alpha)v_0(\alpha)(c^2-v^2)(D(\alpha)-v^2)^2}{2D(\alpha)\omega_0(\alpha)(D(\alpha)-v_0^2(\alpha))^2}+\frac{\omega_0'(\alpha)v'_0(\alpha)(c^2-v^2)(D(\alpha)-v^2)^2}{2D(\alpha)\omega_0(\alpha)(D(\alpha)-v_0^2(\alpha))^2}\\
    &-\frac{\omega_0'(\alpha)v_0(\alpha)(D(\alpha)-v^2)(D(\alpha)+2c^2-3v^2)vv_\alpha}{D(\alpha)\omega_0(\alpha)(D(\alpha)-v_0^2(\alpha))^2}-\frac{\omega_0'(\alpha)v_0(\alpha)(c^2-v^2)(D(\alpha)-v^2)^2D'(\alpha)}{2D^2(\alpha)\omega_0(\alpha)(D(\alpha)-v_0^2(\alpha))^2}\\
    &-\frac{(\omega_0'(\alpha))^2v_0(\alpha)(c^2-v^2)(D(\alpha)-v^2)^2}{2D(\alpha)\omega_0^2(\alpha)(D(\alpha)-v_0^2(\alpha))^2}-\frac{\omega_0'(\alpha)v_0(\alpha)(c^2-v^2)(D(\alpha)-v^2)^2D'(\alpha)}{D(\alpha)\omega_0(\alpha)(D(\alpha)-v_0^2(\alpha))^3}\\
    &+\frac{2\omega_0'(\alpha)v_0^2(\alpha)v'_0(\alpha)(c^2-v^2)(D(\alpha)-v^2)^2}{D(\alpha)\omega_0(\alpha)(D(\alpha)-v_0^2(\alpha))^3}+\frac{3\omega_0''(\alpha)(c^2-v^2)(v^2-D(\alpha))v}{4D^2(\alpha)\omega_0(\alpha)}\\
    &+\frac{3\omega_0'(\alpha)(c^2-3v^2)(v^2-D(\alpha))v_\alpha}{4D^2(\alpha)\omega_0(\alpha)}+\frac{3\omega_0'(\alpha)(c^2-v^2)v^2v_\alpha}{2D^2(\alpha)\omega_0(\alpha)}-\frac{3\omega_0'(\alpha)D'(\alpha)(c^2-v^2)v}{4D^2(\alpha)\omega_0(\alpha)}\\
    &-\frac{3\omega_0'(\alpha)D'(\alpha)(c^2-v^2)(v^2-D(\alpha))v}{2D^3(\alpha)\omega_0(\alpha)}-\frac{3(\omega_0'(\alpha))^2(c^2-v^2)(v^2-D(\alpha))v}{4D^2(\alpha)\omega_0^2(\alpha)}\\
    &-\frac{3\omega_0''(\alpha)v_0(\alpha)(c^2-v^2)(v^2-D(\alpha))^2}{4D^2(\alpha)\omega_0(\alpha)(v^2_0(\alpha)-D(\alpha))}-\frac{3\omega_0'(\alpha)v'_0(\alpha)(c^2-v^2)(v^2-D(\alpha))^2}{4D^2(\alpha)\omega_0(\alpha)(v^2_0(\alpha)-D(\alpha))}\\
    &+\frac{3\omega_0'(\alpha)v_0(\alpha)(v^2-D(\alpha))^2vv_\alpha}{2D^2(\alpha)\omega_0(\alpha)(v^2_0(\alpha)-D(\alpha))}-\frac{3\omega_0'(\alpha)v_0(\alpha)(c^2-v^2)(v^2-D(\alpha))vv_\alpha}{D^2(\alpha)\omega_0(\alpha)(v^2_0(\alpha)-D(\alpha))}\\
    &+\frac{3\omega_0'(\alpha)D'_0(\alpha)v_0(\alpha)(c^2-v^2)(v^2-D(\alpha))^2}{2D^2(\alpha)\omega_0(\alpha)(v^2_0(\alpha)-D(\alpha))}+\frac{3\omega_0'(\alpha)D'(\alpha)v_0(\alpha)(c^2-v^2)(v^2-D(\alpha))^2}{2D^3(\alpha)\omega_0(\alpha)(v^2_0(\alpha)-D(\alpha))}\\
    &+\frac{3(\omega_0'(\alpha))^2v_0(\alpha)(c^2-v^2)(v^2-D(\alpha))^2}{4D^2(\alpha)\omega_0^2(\alpha)(v^2_0(\alpha)-D(\alpha))}+\frac{3\omega_0'(\alpha)v_0^2(\alpha)v'_0(\alpha)(c^2-v^2)(v^2-D(\alpha))^2}{4D^2(\alpha)\omega_0(\alpha)(v^2_0(\alpha)-D(\alpha))^2}\\
    &-\frac{3\omega_0'(\alpha)D'(\alpha)v_0(\alpha)(c^2-v^2)(v^2-D(\alpha))^2}{4D^2(\alpha)\omega_0(\alpha)(v^2_0(\alpha)-D(\alpha))^2}+\frac{2D'(\alpha)v'_0(\alpha)(c^2-v^2)(D(\alpha)-v^2)(v^2-v_0^2(\alpha))}{(c^2-v_0^2(\alpha))(D(\alpha)-v_0^2(\alpha))^3}\\
    &+\left[-\frac{3\omega_0''(\alpha)(c^2-v^2)(D(\alpha)-v^2)^2}{8D^{\frac{5}{2}}(\alpha)\omega_0(\alpha)}\right.+\frac{3\omega_0'(\alpha)(D(\alpha)-v^2)(D(\alpha)+2c^2-3v^2)vv_\alpha
    }{4D^{\frac{5}{2}}(\alpha)\omega_0(\alpha)}\\
    &-\frac{3\omega_0'(\alpha)D'(\alpha)(c^2-v^2)(D(\alpha)-v^2)}{4D^{\frac{5}{2}}(\alpha)\omega_0(\alpha)}+\frac{15\omega_0'(\alpha)D'(\alpha)(c^2-v^2)(D(\alpha)-v^2)^2}{16D^{\frac{7}{2}}(\alpha)\omega_0(\alpha)}\\
    &\left.+\frac{3(\omega_0'(\alpha))^2(c^2-v^2)(D(\alpha)-v^2)^2}{8D^{\frac{5}{2}}(\alpha)\omega_0^2(\alpha)}\right]\ln\frac{(v-\sqrt{D(\alpha)})(v_0(\alpha)+\sqrt{D(\alpha)})}{(v+\sqrt{D(\alpha)})(v_0(\alpha)-\sqrt{D(\alpha)})}\\
    &+\frac{3\omega_0'(\alpha)(c^2-v^2)(D(\alpha)-v^2)v_\alpha}{4D^2(\alpha)\omega_0(\alpha)}+\frac{3\omega_0'(\alpha)v_0(\alpha)(c^2-v^2)(D(\alpha)-v^2)^2}{4D^2(\alpha)\omega_0(\alpha)(v_0^2-D(\alpha))}\\
    &-\frac{3\omega_0'(\alpha)D'(\alpha)(c^2-v^2)(D(\alpha)-v^2)v}{8D^3(\alpha)\omega_0(\alpha)}-\frac{3\omega_0'(\alpha)D'(\alpha)v_0(\alpha)(c^2-v^2)(D(\alpha)-v^2)^2}{8D^3(\alpha)\omega_0(\alpha)(v_0^2-D(\alpha))}\\
    &+\frac{v''_0(\alpha)(c^2-v^2)(D(\alpha)-v^2)^2}{(c^2-v_0^2(\alpha))(D(\alpha)-v_0^2(\alpha))^2}-\frac{2v'_0(\alpha)(D(\alpha)-v^2)(D(\alpha)+2c^2-3v^2)}{(c^2-v_0^2(\alpha))(D(\alpha)-v_0^2(\alpha))^2}\\
    &+\frac{2v_0(\alpha)(v'_0(\alpha))^2(c^2-v^2)(D(\alpha)-v^2)^2}{(c^2-v_0^2(\alpha))(D(\alpha)-v_0^2(\alpha))^2}+\frac{4v_0(\alpha)(v'_0(\alpha))^2(c^2-v^2)(D(\alpha)-v^2)^2}{(c^2-v_0^2(\alpha))(D(\alpha)-v_0^2(\alpha))^3}.
\end{align*}
\endgroup
\textbf{Case 2}: $D(\alpha)<0$. In this case, we can obtain
\begingroup
\allowdisplaybreaks
\begin{align}\label{raa}
    &r_{\alpha\alpha}(v,\alpha)=-\frac{8m(\alpha-m)v'_0(\alpha)(c^2-v^2)v}{(\alpha v_0^2(\alpha)-2mc^2)^2}-\frac{4m\alpha(\alpha-2m)v''_0(\alpha)(c^2-v^2)v}{(\alpha v_0^2(\alpha)-2mc^2)^2}\nonumber\\
    &-\frac{4m\alpha(\alpha-2m)v'_0(\alpha)(c^2-3v^2)v_\alpha}{(\alpha v_0^2(\alpha)-2mc^2)^2}+\frac{8m\alpha(\alpha-2m)v'_0(\alpha)(v_0^2(\alpha)+2\alpha v_0(\alpha)v'_0(\alpha))(c^2-v^2)v}{(\alpha v_0^2(\alpha)-2mc^2)^3}\nonumber\\
    &+\frac{\left[2\alpha v_0^3(\alpha)+3(\alpha^2-4m^2)v_0^2(\alpha)v'_0(\alpha)-4mc^2v_0(\alpha)-4mc^2(\alpha-2m)v'_0(\alpha)\right](c^2-v^2)v}{(\alpha v_0^2(\alpha)-2mc^2)^2(c^2-v_0^2(\alpha))}\nonumber\\
    &+\frac{\left[(\alpha^2-4m^2)v_0^3(\alpha)-4mc^2(\alpha-2m)v_0(\alpha)\right](c^2-3v^2)v_\alpha}{(\alpha v_0^2(\alpha)-2mc^2)^2(c^2-v_0^2(\alpha))}\nonumber\\
    &-\frac{2\left[(\alpha^2-4m^2)v_0^3(\alpha)-4mc^2(\alpha-2m)v_0(\alpha)\right](c^2-v^2)(v_0^2(\alpha)+2\alpha v_0(\alpha)v'_0(\alpha))v}{(\alpha v_0^2(\alpha)-2mc^2)^3(c^2-v_0^2(\alpha))}\nonumber\\
    &+\frac{2\left[(\alpha^2-4m^2)v_0^3(\alpha)-4mc^2(\alpha-2m)v_0(\alpha)\right](c^2-v^2)v_0(\alpha)v'_0(\alpha)v}{(\alpha v_0^2(\alpha)-2mc^2)^2(c^2-v_0^2(\alpha))}\nonumber\\
    &+\frac{2m(c^2-v^2)\omega_0''(\alpha)}{D^2(\alpha)}-\frac{4m\omega_0'(\alpha)vv_\alpha}{D^2(\alpha)}-\frac{4m(c^2-v^2)\omega_0'(\alpha)D'(\alpha)}{D^3(\alpha)}\nonumber\\
    &-\frac{m\omega_0''(\alpha)(c^2-v^2)v_0(\alpha)v}{D^2(\alpha)(v_0^2(\alpha)-D(\alpha))}-\frac{m\omega_0'(\alpha)(c^2-3v^2)v_0(\alpha)v_\alpha}{D^2(\alpha)(v_0^2(\alpha)-D(\alpha))}-\frac{m\omega_0'(\alpha)(c^2-v^2)v'_0(\alpha)v}{D^2(\alpha)(v_0^2(\alpha)-D(\alpha))}\nonumber\\
    &+\frac{2m\omega_0'(\alpha)(c^2-v^2)v_0(\alpha)D'(\alpha)v}{D^3(\alpha)(v_0^2(\alpha)-D(\alpha))}+\frac{m\omega_0'(\alpha)(c^2-v^2)v_0(\alpha)(2v_0(\alpha)v'_0(\alpha)-D'(\alpha))v}{D^2(\alpha)(v_0^2(\alpha)-D(\alpha))^2}\nonumber\\
    &+\frac{m\omega_0''(\alpha)(c^2-v^2)v^2}{D^2(\alpha)(v_0^2(\alpha)-D(\alpha))}+\frac{2m\omega_0'(\alpha)(c^2-2v^2)vv_\alpha}{D^2(\alpha)(v_0^2(\alpha)-D(\alpha))}-\frac{2m\omega_0'(\alpha)(c^2-v^2)D'(\alpha)v^2}{D^3(\alpha)(v_0^2(\alpha)-D(\alpha))}\nonumber\\
    &-\frac{m\omega_0'(\alpha)(c^2-v^2)(2vv_\alpha-D'(\alpha))v^2}{D^2(\alpha)(v_0^2(\alpha)-D(\alpha))^2}-\frac{3m\omega_0''(\alpha)(c^2-v^2)v}{(-D(\alpha))^{\frac{5}{2}}}\arctan\frac{v_0(\alpha)}{(-D(\alpha))^{\frac{1}{2}}}\nonumber\\
    &-\frac{3m\omega_0'(\alpha)(c^2-3v^2)v_\alpha}{(-D(\alpha))^{\frac{5}{2}}}\arctan\frac{v_0(\alpha)}{(-D(\alpha))^{\frac{1}{2}}}-\frac{15m\omega_0'(\alpha)(c^2-v^2)D'(\alpha)v}{2(-D(\alpha))^{\frac{7}{2}}}\arctan\frac{v_0(\alpha)}{(-D(\alpha))^{\frac{1}{2}}}\nonumber\\
    &-\frac{3m\omega_0'(\alpha)(c^2-v^2)(v_0(\alpha)D'(\alpha)-2D(\alpha)v'_0(\alpha))v}{2(-D(\alpha))^3(v_0^2(\alpha)-D(\alpha))}+\frac{3m\omega_0''(\alpha)(c^2-v^2)v}{(-D(\alpha))^{\frac{5}{2}}}\arctan\frac{v}{(-D(\alpha))^{\frac{1}{2}}}\nonumber\\
    &+\frac{3m\omega_0'(\alpha)(c^2-3v^2)v_\alpha}{(-D(\alpha))^{\frac{5}{2}}}\arctan\frac{v}{(-D(\alpha))^{\frac{1}{2}}}+\frac{15m\omega_0'(\alpha)(c^2-v^2)D'(\alpha)v}{2(-D(\alpha))^{\frac{7}{2}}}\arctan\frac{v}{(-D(\alpha))^{\frac{1}{2}}}\nonumber\\
    &+\frac{3m\omega_0'(\alpha)(c^2-v^2)(vD'(\alpha)-2D(\alpha)v^{\prime}(\alpha))v}{2(-D(\alpha))^3(v_0^2(\alpha)-D(\alpha))},
\end{align}
\endgroup
and

\begingroup
\allowdisplaybreaks
\begin{align}
    v_{\alpha\alpha}=&-\frac{\omega_0''(\alpha)(c^2-v^2)v}{2D(\alpha)\omega_0(\alpha)}-\frac{\omega_0'(\alpha)(c^2-3v^2)v_\alpha}{2D(\alpha)\omega_0(\alpha)}+\frac{\omega_0'(\alpha)D'(\alpha)(c^2-v^2)v}{2D^2(\alpha)\omega_0(\alpha)}+\frac{(\omega_0'(\alpha))^2(c^2-v^2)v}{2D(\alpha)\omega_0^2(\alpha)} \notag \\
    &+\frac{\omega_0''(\alpha)v_0(\alpha)(c^2-v^2)(D(\alpha)-v^2)^2}{2D(\alpha)\omega_0(\alpha)(D(\alpha)-v_0^2(\alpha))^2}+\frac{\omega_0'(\alpha)v'_0(\alpha)(c^2-v^2)(D(\alpha)-v^2)^2}{2D(\alpha)\omega_0(\alpha)(D(\alpha)-v_0^2(\alpha))^2} \notag \\
    &-\frac{\omega_0'(\alpha)v_0(\alpha)(D(\alpha)-v^2)(D(\alpha)+2c^2-3v^2)vv_\alpha}{D(\alpha)\omega_0(\alpha)(D(\alpha)-v_0^2(\alpha))^2}-\frac{\omega_0'(\alpha)v_0(\alpha)(c^2-v^2)(D(\alpha)-v^2)^2D'(\alpha)}{2D^2(\alpha)\omega_0(\alpha)(D(\alpha)-v_0^2(\alpha))^2} \notag \\
    &-\frac{(\omega_0'(\alpha))^2v_0(\alpha)(c^2-v^2)(D(\alpha)-v^2)^2}{2D(\alpha)\omega_0^2(\alpha)(D(\alpha)-v_0^2(\alpha))^2}-\frac{\omega_0'(\alpha)v_0(\alpha)(c^2-v^2)(D(\alpha)-v^2)^2D'(\alpha)}{D(\alpha)\omega_0(\alpha)(D(\alpha)-v_0^2(\alpha))^3} \notag \\
    &+\frac{2\omega_0'(\alpha)v_0^2(\alpha)v'_0(\alpha)(c^2-v^2)(D(\alpha)-v^2)^2}{D(\alpha)\omega_0(\alpha)(D(\alpha)-v_0^2(\alpha))^3}+\frac{3\omega_0''(\alpha)(c^2-v^2)(v^2-D(\alpha))v}{4D^2(\alpha)\omega_0(\alpha)} \notag \\
    &+\frac{3\omega_0'(\alpha)(c^2-3v^2)(v^2-D(\alpha))v_\alpha}{4D^2(\alpha)\omega_0(\alpha)}+\frac{3\omega_0'(\alpha)(c^2-v^2)v^2v_\alpha}{2D^2(\alpha)\omega_0(\alpha)}-\frac{3\omega_0'(\alpha)D'(\alpha)(c^2-v^2)v}{4D^2(\alpha)\omega_0(\alpha)} \notag \\
    &-\frac{3\omega_0'(\alpha)D'(\alpha)(c^2-v^2)(v^2-D(\alpha))v}{2D^3(\alpha)\omega_0(\alpha)}-\frac{3(\omega_0'(\alpha))^2(c^2-v^2)(v^2-D(\alpha))v}{4D^2(\alpha)\omega_0^2(\alpha)} \notag \\
    &-\frac{3\omega_0''(\alpha)v_0(\alpha)(c^2-v^2)(v^2-D(\alpha))^2}{4D^2(\alpha)\omega_0(\alpha)(v^2_0(\alpha)-D(\alpha))}-\frac{3\omega_0'(\alpha)v'_0(\alpha)(c^2-v^2)(v^2-D(\alpha))^2}{4D^2(\alpha)\omega_0(\alpha)(v^2_0(\alpha)-D(\alpha))} \notag \\
    &+\frac{3\omega_0'(\alpha)v_0(\alpha)(v^2-D(\alpha))^2vv_\alpha}{2D^2(\alpha)\omega_0(\alpha)(v^2_0(\alpha)-D(\alpha))}-\frac{3\omega_0'(\alpha)v_0(\alpha)(c^2-v^2)(v^2-D(\alpha))vv_\alpha}{D^2(\alpha)\omega_0(\alpha)(v^2_0(\alpha)-D(\alpha))} \notag \\
    &+\frac{3\omega_0'(\alpha)D'_0(\alpha)v_0(\alpha)(c^2-v^2)(v^2-D(\alpha))^2}{2D^2(\alpha)\omega_0(\alpha)(v^2_0(\alpha)-D(\alpha))}+\frac{3\omega_0'(\alpha)D'(\alpha)v_0(\alpha)(c^2-v^2)(v^2-D(\alpha))^2}{2D^3(\alpha)\omega_0(\alpha)(v^2_0(\alpha)-D(\alpha))} \notag \\
    &+\frac{3(\omega_0'(\alpha))^2v_0(\alpha)(c^2-v^2)(v^2-D(\alpha))^2}{4D^2(\alpha)\omega_0^2(\alpha)(v^2_0(\alpha)-D(\alpha))}+\frac{3\omega_0'(\alpha)v_0^2(\alpha)v'_0(\alpha)(c^2-v^2)(v^2-D(\alpha))^2}{4D^2(\alpha)\omega_0(\alpha)(v^2_0(\alpha)-D(\alpha))^2} \notag \\
    &-\frac{3\omega_0'(\alpha)D'(\alpha)v_0(\alpha)(c^2-v^2)(v^2-D(\alpha))^2}{4D^2(\alpha)\omega_0(\alpha)(v^2_0(\alpha)-D(\alpha))^2} \notag \\
    &+\left[-\frac{3\omega_0''(\alpha)(c^2-v^2)(D(\alpha)-v^2)^2}{8(-D(\alpha))^{\frac{5}{2}}\omega_0(\alpha)}\right.+\frac{3\omega_0'(\alpha)(D(\alpha)-v^2)(D(\alpha)+2c^2-3v^2)vv_\alpha}{4(-D(\alpha))^{\frac{5}{2}}\omega_0(\alpha)} \notag \\
    &-\frac{3\omega_0'(\alpha)D'(\alpha)(c^2-v^2)(D(\alpha)-v^2)}{4(-D(\alpha))^{\frac{5}{2}}\omega_0(\alpha)}+\frac{15\omega_0'(\alpha)D'(\alpha)(c^2-v^2)(D(\alpha)-v^2)^2}{16(-D(\alpha))^{\frac{7}{2}}\omega_0(\alpha)} \notag \\
    &\left.+\frac{3(\omega_0'(\alpha))^2(c^2-v^2)(D(\alpha)-v^2)^2}{8(-D(\alpha))^{\frac{5}{2}}\omega_0^2(\alpha)}\right]\left(\arctan\frac{v}{\sqrt{-D(\alpha)}}-\arctan\frac{v_0(\alpha)}{\sqrt{-D(\alpha)}}\right) \notag \\
    &+\frac{3\omega_0'(\alpha)(c^2-v^2)(D(\alpha)-v^2)v_\alpha}{8D^2(\alpha)\omega_0(\alpha)}+\frac{3\omega_0'(\alpha)v_0(\alpha)(c^2-v^2)(D(\alpha)-v^2)^2}{8D^2(\alpha)\omega_0(\alpha)(v_0^2-D(\alpha))} \notag \\
    &-\frac{3\omega_0'(\alpha)D'(\alpha)(c^2-v^2)(D(\alpha)-v^2)v}{16D^3(\alpha)\omega_0(\alpha)}-\frac{3\omega_0'(\alpha)D'(\alpha)v_0(\alpha)(c^2-v^2)(D(\alpha)-v^2)^2}{16D^3(\alpha)\omega_0(\alpha)(v_0^2-D(\alpha))} \notag \\
    &+\frac{v''_0(\alpha)(c^2-v^2)(D(\alpha)-v^2)^2}{(c^2-v_0^2(\alpha))(D(\alpha)-v_0^2(\alpha))^2}-\frac{2v'_0(\alpha)(D(\alpha)-v^2)(D(\alpha)+2c^2-3v^2)}{(c^2-v_0^2(\alpha))(D(\alpha)-v_0^2(\alpha))^2} \notag \\
    &+\frac{2v_0(\alpha)(v'_0(\alpha))^2(c^2-v^2)(D(\alpha)-v^2)^2}{(c^2-v_0^2(\alpha))(D(\alpha)-v_0^2(\alpha))^2}+\frac{4v_0(\alpha)(v'_0(\alpha))^2(c^2-v^2)(D(\alpha)-v^2)^2}{(c^2-v_0^2(\alpha))(D(\alpha)-v_0^2(\alpha))^3} \notag \\
    &+\frac{2D'(\alpha)v'_0(\alpha)(c^2-v^2)(D(\alpha)-v^2)(v^2-v_0^2(\alpha))}{(c^2-v_0^2(\alpha))(D(\alpha)-v_0^2(\alpha))^3} \label{vaa}
\end{align}
\endgroup
\textbf{Case 3}: $D(\alpha)=0$. In this case, we have
\begingroup
\allowdisplaybreaks
\begin{align*}
	r_{\alpha\alpha}(v,\alpha)=&\left[2m\omega_0''(\alpha)(c^2-v^2)v+2m\omega_0'(\alpha)(c^2-3v^2)v_\alpha\right]\left(   \frac{\left(\frac{\alpha}{2m}\right)^{\frac{5}{2}}}{5c^5}+\frac{1}{5v^5}\right)+\frac{m\omega_0'(\alpha)\alpha^{\frac{3}{2}}(c^2-v^2)v}{(2m)^{\frac{5}{2}}c^5}\\
	&-\frac{2m\omega_0'(\alpha)(c^2-v^2)v_\alpha}{v^5}+\frac{(c^2-3v^2)v_\alpha}{(c^2-v_0^2(\alpha))v_0(\alpha)}-\frac{(c^2-3v_0^2(\alpha))v'_0(\alpha)(c^2-v^2)v}{(c^2-v_0^2(\alpha))^2v_0^2(\alpha)},
\end{align*}
\endgroup
and
\begingroup
\allowdisplaybreaks
\begin{align*}
	v_{\alpha\alpha}=&\frac{v''_0(\alpha)(c^2-v^2)v^4}{(c^2-v_0^2(\alpha))v_0^4(\alpha)}-\frac{2v'_0(\alpha)v^5v_\alpha}{(c^2-v_0^2(\alpha))v_0^4(\alpha)}+\frac{4v'_0(\alpha)(c^2-v^2)v^3v_\alpha}{(c^2-v_0^2(\alpha))v_0^4(\alpha)}+\frac{2(v'_0(\alpha))^2(c^2-v^2)v^4}{(c^2-v_0^2(\alpha))v_0^3(\alpha)}\\
	&-\frac{4(v'_0(\alpha))^2(c^2-v^2)v^4}{(c^2-v_0^2(\alpha))v_0^5(\alpha)}+\frac{2\omega_0''(\alpha)(c^2-v^2)}{5c^2v}-\frac{2\omega_0''(\alpha)(c^2-v^2)v^4}{5c^2v_0^5(\alpha)}-\frac{4\omega_0'(\alpha)v_\alpha}{5c^2}+\frac{4\omega_0'(\alpha)v^5v_\alpha}{5c^2v_0^5(\alpha)}\\
	&-\frac{2\omega_0'(\alpha)(c^2-v^2)v_\alpha}{5c^2v^2}-\frac{8\omega_0'(\alpha)(c^2-v^2)v^3v_\alpha}{5c^2v_0^5(\alpha)}+\frac{2\omega_0'(\alpha)v'_0(\alpha)(c^2-v^2)v^4}{v_0^6(\alpha)}.
\end{align*}
\endgroup
When $D(\alpha)=\frac{v_0^2(\alpha)-\frac{2m}{\alpha}c^2}{1-\frac{2m}{\alpha}}\neq0$, the denominators in the fomulas of $r_{\alpha\alpha}$ and $v_{\alpha\alpha}$ contains the terms $\alpha v_0^2(\alpha)-2mc^2$,   $c^2-v_0^2(\alpha)$,   $D(\alpha)$,   $v_0^2(\alpha)-D(\alpha)$,   $v_0(\alpha)+\sqrt{D(\alpha)}$,   $v-\sqrt{D(\alpha)}$ and $\omega_0(\alpha)$. The nonvanishing property of these terms follows from the initial value assumptions, \eqref{omega2} and Proposition \ref{Prop1}. In the case $D(\alpha)=0$, the potential denominators reduce to $v$,  $v_0(\alpha)$ and $c^2-v_0^2(\alpha)$, whose non-vanishing can been established similarly.
In summary, we can prove that $r_{\alpha\alpha}$ and  $v_{\alpha\alpha}$ remain bounded within any finite time.
\end{proof}
\section{Proof of Theorems \ref{thm:1.1} and \ref{thm:1.2} }
With the above preparations, we are ready to prove the main results.
\subsection{Proof of Theorem \ref{thm:1.1}}
We first prove Theorem \ref{thm:1.1}, which deals with the blow-up phenomenon of the fluid when $v_0(\alpha)=0$.
\subsubsection{Blow up behavior of the classical solution}
Given $v_0(\alpha)=0$, we have established in \textbf{Case 4} of Section \ref{subsection3.2.2} that there exists a finite time $t'$ such that $r_\alpha(t')=0$ under the following two conditions
$$2m<\alpha\leq\frac{\sqrt{33}+3}{3}m\mbox{ and }v'_0(\alpha)<-\frac{mc(3\alpha-2m)}{2\alpha^2(\alpha-2m)}-\frac{3\sqrt{2}c}{2\alpha}\sqrt{\frac{m}{\alpha-2m}}\arctan\sqrt{\frac{\alpha-2m}{2m}}$$
or
$$\alpha>\frac{\sqrt{33}+3}{3}m$$ and $$v'_0(\alpha)\leq-\frac{c}{2\alpha}\sqrt{\frac{m}{\alpha-2m}}\left(\sqrt{\frac{\sqrt{33}-3}{2}}+\frac{(\sqrt{33}-3)^{\frac{3}{2}}}{8\sqrt{3}}+3\sqrt{2}\arctan\sqrt{\frac{\sqrt{33}-3}{6}}\right).$$

Taking $0<\delta_2<\delta_1$, where $\delta_1$ is defined in Section \ref{subsection3.2.2}, we have
\begin{equation*}
	\begin{aligned}
		\rho(t'-\delta_2)&=\rho_0(\alpha)e^{-\int_{0}^{t'-\delta_2} \frac{1-\frac{2m}{r}}{c}\left(\frac{m\omega_0(\alpha)}{r^2v}+\frac{\left(1-\frac{2m}{r}\right)\omega_0'(\alpha)}{2vr_\alpha}+\frac{2v}{r}\right)   ds           }\\
		&=d_1e^{\int_{t'-\delta_1}^{t'-\delta_2}\left(\frac{\left(1-\frac{2m}{r}\right)^2v(t')}{\left(1-\frac{2m}{r(t')}\right)^2v}\frac{1}{t'-s}\right) ds }.
	\end{aligned}
\end{equation*}
Here $d_1$ is a positive constant for fixed $\delta_1$.

According to Corollary \ref{cor:2.1}, we have  $$\rho=O\left(\frac{1}{t'-t}\right) \rightarrow +\infty,$$
when $t'-t=\delta_2\rightarrow  0.$

At the same time, we have
\begin{equation*}
	v_r=\frac{m\omega_0(\alpha)}{r^2v}+\frac{\left(1-\frac{2m}{r}\right)\omega_0'(\alpha)}{2vr_\alpha}=O\left(\frac{1}{t-t'}\right)\rightarrow -\infty.
\end{equation*}

\subsubsection{Global existence of the classical solution}

If the initial data satisfy
$$2m<\alpha\leq\frac{\sqrt{33}+3}{3}m\mbox{ and }v'_0(\alpha)\geq-\frac{mc(3\alpha-2m)}{2\alpha^2(\alpha-2m)}-\frac{3\sqrt{2}c}{2\alpha}\sqrt{\frac{m}{\alpha-2m}}\arctan\sqrt{\frac{\alpha-2m}{2m}}$$ or
$$\alpha>\frac{\sqrt{33}+3}{3}m$$ and $$v'_0(\alpha)>-\frac{c}{2\alpha}\sqrt{\frac{m}{\alpha-2m}}\left(\sqrt{\frac{\sqrt{33}-3}{2}}+\frac{(\sqrt{33}-3)^{\frac{3}{2}}}{8\sqrt{3}}+3\sqrt{2}\arctan\sqrt{\frac{\sqrt{33}-3}{6}}\right).$$
We have $r_\alpha>0$ and $v\neq0$ hold for any finite time $t>0$.

Combining this with \eqref{raa} and \eqref{vaa}, we can conclude that $v_r,\ \rho,\ v_{rr}$ and $\rho_r$ are continuous and bounded on $t \in [0, \infty)$, which means that the classical solution exists globally.


\subsection{Proof of Theorem \ref{thm:1.2}}
Next, we prove Theorem \ref{thm:1.2} by dividing into two cases according to the symbol of $v_0(\alpha)$.

\textbf{Case I:} $v_0(\alpha)<0$

In this case, we have $$-c<v<v_0(\alpha)<0,$$ thus, $v$ never vanishes in finite time. 

\textbf{Case I.1:} $\omega_0'(\alpha)>0\ \&\ D(\alpha)>0$


In \textbf{Case 1} of Section \ref{subsection3.2.2}, we have established that there exists a finite time $t'$ such that $r_\alpha(t')=0$ under condition
\begin{equation*}
	\omega_0'(\alpha)>\frac{1}{Dis(\alpha)},
\end{equation*}
where
\begin{align*}
	Dis(\alpha)=&\frac{2m(c^2-v_0^2(\alpha))}{D^2(\alpha)}+\frac{2m(c^2-v_0^2(\alpha))v_0(\alpha)}{D^2(\alpha)c}+\frac{m(\alpha-2m)v_0^2(\alpha)}{2mD^2(\alpha)}+\frac{mc(\alpha-2m)v_0(\alpha)}{\alpha D^2(\alpha)}\\
	&+\frac{3m(c^2-v_0^2(\alpha))v_0(\alpha)}{2D^{\frac{5}{2}}(\alpha)}\ln \frac{\left(v_0(\alpha)-\sqrt{D(\alpha)}\right)\left(c-\sqrt{D(\alpha)}\right)}{\left(v_0(\alpha)+\sqrt{D(\alpha)}\right)\left(c+\sqrt{D(\alpha)}\right)}.
\end{align*}

Taking $0<\delta_2<\delta_1$, where $\delta_1$ is defined in Section \ref{subsection3.2.2}, we have
\begin{equation*}
	\begin{aligned}
		\rho(t'-\delta_2)&=\rho_0(\alpha)e^{-\int_{0}^{t'-\delta_2} \frac{1-\frac{2m}{r}}{c}\left(\frac{m\omega_0(\alpha)}{r^2v}+\frac{\left(1-\frac{2m}{r}\right)\omega_0'(\alpha)}{2vr_\alpha}+\frac{2v}{r}\right)   ds           }\\
		&=d_2e^{\int_{t'-\delta_1}^{t'-\delta_2}\left(\frac{\left(1-\frac{2m}{r}\right)^2v(t')}{\left(1-\frac{2m}{r(t')}\right)^2v}\frac{1}{t'-s}\right) ds }.
	\end{aligned}
\end{equation*}
Here $d_2$ is a positive constant for fixed $\delta_1$.

According to Corollary \ref{cor:2.1}, we have  $$\rho=O\left(\frac{1}{t'-t}\right) \rightarrow +\infty,$$
when $t'-t=\delta_2\rightarrow  0.$

At the same time, we have
\begin{equation*}
	v_r=\frac{m\omega_0(\alpha)}{r^2v}+\frac{\left(1-\frac{2m}{r}\right)\omega_0'(\alpha)}{2vr_\alpha}=O\left(\frac{1}{t-t'}\right)\rightarrow -\infty.
\end{equation*}

Otherwise, if
\begin{equation*}
	\omega_0'(\alpha)\leq\frac{1}{Dis(\alpha)},
\end{equation*}
then for any finite time, $r_\alpha>0$, $v\neq0$ are bounded. According to Section \ref{subsec3}, $v_{rr}$ and $\rho_r$ also stay bounded, which means the classical solution exists globally.

\textbf{Case I.2}: $\omega_0'(\alpha)>0\ \&\ D(\alpha)=0$

In \textbf{Case 2} of Section \ref{subsection3.2.2}, we have also established that there exists a finite time $t'$ such that $r_\alpha(t')=0$ under condition
\begin{equation*}
	\omega_0'(\alpha)>\frac{1}{Dis(\alpha)}=\frac{10mc^2\alpha^{3/2}}{(\alpha-2m)\left(\alpha^{5/2}-(2m)^{5/2}\right)  }.
\end{equation*}

By the same discussion as \textbf{Case I.1}, we have
$$\rho=O\left(\frac{1}{t'-t}\right) \rightarrow +\infty,\ v_r=\frac{m\omega_0(\alpha)}{r^2v}+\frac{\left(1-\frac{2m}{r}\right)\omega_0'(\alpha)}{2vr_\alpha}=O\left(\frac{1}{t-t'}\right)\rightarrow -\infty, $$
when $t'-t\rightarrow  0.$

If
\begin{equation*}
	\omega_0'(\alpha)\leq\frac{10mc^2\alpha^{3/2}}{(\alpha-2m)\left(\alpha^{5/2}-(2m)^{5/2}\right)  }.
\end{equation*}
then for any finite time, it holds that $r_\alpha>0$ and $v\neq0$. By the same discussion as \textbf{Case I.1}, we can get a global solution.

\textbf{Case I.3}: $\omega_0'(\alpha)>0\ \&\ D(\alpha)<0$

In \textbf{Case 3} of Section \ref{subsection3.2.2}, we know that there exists a finite time $t'$ such that $r_\alpha(t')=0$ under condition
\begin{equation*}
	\omega_0'(\alpha)>\frac{1}{Dis(\alpha)},
\end{equation*}
where
\begin{align*}
	Dis(\alpha)=&\frac{2m(c^2-v_0^2(\alpha))}{D^2(\alpha)}+\frac{2m(c^2-v_0^2(\alpha))v_0(\alpha)}{D^2(\alpha)c}+\frac{m(\alpha-2m)v_0^2(\alpha)}{2mD^2(\alpha)}+\frac{mc(\alpha-2m)v_0(\alpha)}{\alpha D^2(\alpha)}\\
	&+\frac{3m(c^2-v_0^2(\alpha))v_0(\alpha)}{(-D(\alpha))^{\frac{5}{2}}}\left(\arctan \frac{v_0(\alpha)}{(-D(\alpha))^{\frac{1}{2}}}+\arctan \frac{c}{(-D(\alpha))^{\frac{1}{2}}}\right).
\end{align*}

By the same discussion as \textbf{Case I.1}, we have
$$\rho=O\left(\frac{1}{t'-t}\right) \rightarrow +\infty,$$
and$$ v_r=\frac{m\omega_0(\alpha)}{r^2v}+\frac{\left(1-\frac{2m}{r}\right)\omega_0'(\alpha)}{2vr_\alpha}=O\left(\frac{1}{t-t'}\right)\rightarrow -\infty, $$
when $t'-t\rightarrow  0.$

Otherwise, if
\begin{equation*}
	\omega_0'(\alpha)\leq\frac{1}{Dis(\alpha)},
\end{equation*}
$r_\alpha>0$ and $v\neq0$ hold for any finite time. By the same discussion as \textbf{Case I.1}, we can get a global solution.

\textbf{Case I.4}: $\omega_0'(\alpha)=0$

Given $v_0(\alpha)<0$ and $\omega_0'(\alpha)=0$, \textbf{Case 6} in Section \ref{subsection3.2.2} implies $r_\alpha>0$ at any finite time. Consequently, $v_r$, $\rho$, $v_{rr}$ and $\rho_r$ remains bounded in any finite time and the Cauchy problem admits a global solution.

\textbf{Case I.5}: $\omega_0'(\alpha)<0$

Given $v_0(\alpha)<0$ and $\omega_0'(\alpha)<0$, \textbf{Case 5} in Section \ref{subsection3.2.2} implies $r_\alpha>0$ at any finite time. Consequently, $v_r$, $\rho$, $v_{rr}$ and $\rho_r$ remains bounded in any finite time and the Cauchy problem admits a global solution.

\textbf{Case II:} $v_0(\alpha)>0$

In this case, we still need to split into three subcases depending on the symbol of $\omega_0^{'}(\alpha)$.

\textbf{Case II.1:} $\omega_0'(\alpha)>0$

Given $v_0(\alpha)>0$ and $\omega_0'(\alpha)>0$, \textbf{Case 5} in Section \ref{subsection3.2.2} implies $r_\alpha>0$ for all finite time. Consequently, by \eqref{vr}, $v_r$ stays bounded in any finite time. According to \eqref{X1}, $X$ is bounded for all finite time, which leads to $\rho$ also stays bounded.

\textbf{Case II.1.1}: $\omega_0'(\alpha)>0\ \&\ D(\alpha)\geq0$

Given $v_0(\alpha)>0$ and $D(\alpha)\geq0$, Proposition \ref{Prop4} implies $v>0$ at any finite time. By Section \ref{subsec3}, $v_{rr}$ and $\rho_r$ stay bounded too. This means that we can get a global classical solution along this characteristic.

\textbf{Case II.1.2}: $\omega_0'(\alpha)>0\ \&\ D(\alpha)<0$

Given $v_0(\alpha)>0$ and $D(\alpha)<0$, Proposition \ref{Prop4} implies there exists $t_0>0$ such that $v(t_0,\alpha)=0$. By Section \ref{subsec3}, $v_{rr}$ and $\rho_r$ stay bounded as $v\to 0$. Therefore, the characteristic lines can extend all the way to $v=0$.

When $v(t_0)=0$, from \eqref{r}, \eqref{vr}, \eqref{Im}, \eqref{ra-}, \eqref{raa} and \eqref{vaa} we can calculate the values of $r(t_0,\alpha)$, $v_r(t_0,\alpha)$ and $v_{rr}(t_0,\alpha)$ at this point and take them as the new initial data. We note $\beta=r(t_0,\alpha)$, $v_0(\beta)=v(t_0,\alpha)=0$, then we have
$$v'_0(\beta)=\frac{d v_0(\beta)}{d \beta}=\frac{d v}{d r}(t_0, \alpha)=v_r(t_0, \alpha), $$
$$v''_0(\beta)=\frac{d^2 v_0(\beta)}{d \beta^2}=\frac{d^2 v}{d r^2}(t_0, \alpha)=v_{rr}(t_0, \alpha).$$
Finally, the blow-up condition can be determined according to the criteria for $v_0(\beta)=0$ that will be proven below.

\textbf{Case II.2:} $\omega_0'(\alpha)=0$

In this case, we have
\begin{equation*}
	vv_\alpha=\frac{m\omega_0(\alpha)}{r^2}r_\alpha,\ v_r=\frac{m\omega_0(\alpha)}{r^2v}.
\end{equation*}

\textbf{Case II.2.1}: $\omega_0'(\alpha)=0\ \&\ D(\alpha)\geq0$

Given $v_0(\alpha)>0$, $\omega_0'(\alpha)=0$ and $D(\alpha)\geq0$, \textbf{Case 6} in Section \ref{subsection3.2.2} has shown that $v>0$ and $r_\alpha$ stays bounded for all finite time. Thus, $v_r$ and $\rho$ stay bounded at any finite time. By Section \ref{subsec3}, we can see that $v_{rr},\ \rho_r$ also stay bounded. Thus, we can get a classical solution along this characteristic.

\textbf{Case II.2.2}: $\omega_0'(\alpha)=0\ \&\ D(\alpha)<0$

Given $v_0(\alpha)>0$ and $D(\alpha)<0$, Proposition \ref{Prop4} implies there exists a time $t^*>0$ such that $v(t^*)=0$. When $\omega_0'(\alpha)=0$, the analysis in Section \ref{subsec2} establishes $r_\alpha(t^*)=0$ too. Thus, we have
$$
v(t^*,\alpha)=r_{\alpha}(t^*,\alpha)=0.
$$

Taking $0<\delta_2<\delta_1$, where $\delta_1$ is defined in Section \ref{subsection3.2.2}, we have
\begin{equation*}
	\begin{aligned}
		\rho(t^*-\delta_2)&=\rho_0(\alpha)e^{-\int_{0}^{t^*-\delta_2} \frac{1-\frac{2m}{r}}{c}\left(\frac{m\omega_0(\alpha)}{r^2v}+\frac{2v}{r}\right)   ds          }\\
		&=d_3e^{\int_{t^*-\delta_1}^{t^*-\delta_2}\left(\frac{4m^2\omega^3_0(\alpha)\left(1-\frac{2m}{r}\right)}{c^2(\omega_0(\alpha)+c^2)^2r^2}\frac{1}{s-t^*}\right) ds }.
	\end{aligned}
\end{equation*}
Here $d_3$ is a positive constant for fixed $\delta_1$.

Since $\omega_0(\alpha)<0$ and $r>2m$, according to Corollary \ref{cor:2.1}, we have  $$\rho=O\left(\frac{1}{t^*-t}\right) \rightarrow +\infty$$
when $t^*-t=\delta_2\rightarrow  0.$

At the same time, we have
\begin{equation*}
	v_r=\frac{m\omega_0(\alpha)}{r^2v}=O\left(\frac{1}{t-t^*}\right)\rightarrow -\infty.
\end{equation*}

\textbf{Case II.3:} $\omega_0'(\alpha)<0$

\textbf{Case II.3.1}: $\omega_0'(\alpha)<0\ \&\ D(\alpha)\geq0$

Given $v_0(\alpha)>0$, $\omega_0'(\alpha)<0$ and $D(\alpha)\geq0$, \textbf{Case 8} in Section \ref{subsection3.2.2} implies there exists $t'$ such that $r_\alpha(t',\alpha)=0$ while $v(t',\alpha)>0$.

Take $0<\delta_2<\delta_1$, where $\delta_1$ is defined in Section \ref{subsection3.2.2}, we have
\begin{equation*}
	\begin{aligned}
		\rho(t'-\delta_2)&=\rho_0(\alpha)e^{-\int_{0}^{t'-\delta_2} \frac{1-\frac{2m}{r}}{c}\left(\frac{m\omega_0(\alpha)}{r^2v}+\frac{\left(1-\frac{2m}{r}\right)\omega_0'(\alpha)}{2vr_\alpha}+\frac{2v}{r}\right)   ds           }\\
		&=d_4e^{\int_{t'-\delta_1}^{t'-\delta_2}\left(\frac{\left(1-\frac{2m}{r}\right)^2v(t')}{\left(1-\frac{2m}{r(t')}\right)^2v}\frac{1}{t'-s}\right) ds }.
	\end{aligned}
\end{equation*}
Here $d_4$ is a positive constant for fixed $\delta_1$.

According to Corollary \ref{cor:2.1}, we have  $$\rho=O\left(\frac{1}{t'-t}\right) \rightarrow +\infty,$$
when $t'-t=\delta_2\rightarrow  0.$

At the same time, we have
\begin{equation*}
	v_r=\frac{m\omega_0(\alpha)}{r^2v}+\frac{\left(1-\frac{2m}{r}\right)\omega_0'(\alpha)}{2vr_\alpha}=O\left(\frac{1}{t-t'}\right)\rightarrow -\infty.
\end{equation*}

\textbf{Case II.3.2}: $\omega_0'(\alpha)<0\ \&\ D(\alpha)<0$

Given $v_0(\alpha)>0$ and $D(\alpha)<0$, \textbf{Case 8} in Section \ref{subsection3.2.2} implies there exists  $t'$ such that $r_\alpha(t',\alpha)=0$ while $v(t',\alpha)>0$. By the same discussion as \textbf{Case II.3.1}, we have
$$ v_r=\frac{m\omega_0(\alpha)}{r^2v}+\frac{\left(1-\frac{2m}{r}\right)\omega_0'(\alpha)}{2vr_\alpha}=O\left(\frac{1}{t-t'}\right)\rightarrow -\infty,\ \rho=O\left(\frac{1}{t'-t}\right) \rightarrow +\infty $$
when $t'-t\rightarrow  0.$
\begin{remark}
    In above analysis, the blowup time $t'$ may vary from different cases.
\end{remark}

\vspace{5mm}
\phantomsection
\noindent{\large {\bf Acknowledgements.}} S. Miao is supported by the National Key R \& D Program of China 2021YFA1001700 and NSF of China under Grants 12426203, 12221001. C. Wei is supported by Zhejiang Provincial Natural Science Foundation
of China under Grant No. ZCLZ26A0101 and Department of Education of Zhejiang Province under Grant No. JGCG2025595.
%
%
%
%
%

\end{document}